\newtheorem{thm}{Theorem}
\newtheorem{defi}{Definition}
\newtheorem{lem}{Lemma}
\newtheorem{rmk}{Remark}
\def\torus{\mathbb{T}}
\def\real{\mathbb{R}}
\def\complex{\mathbb{C}}
\def\nat{\mathbb{N}}
\def\integer{\mathbb{Z}}
\def\D{\mathcal{D}}
\def\E{\mathcal{E}}
\title[Almost-periodic KAM equilibria]{KAM Theory for almost-periodic equilibria in one dimensional almost-periodic media}
\author[Y. An]{Yujia An}
\address{School of Mathematical Sciences, Beijing Normal University,
No. 19, XinJieKouWai St., HaiDian District, Beijing 100875, P. R. China}
\email{yjan@mail.bnu.edu.cn}
\author[R. de la Llave]{Rafael de la Llave}
\address{School of Mathematics\\
Georgia Institute of Technology \\
686 Cherry St. \\
Atlanta GA 30332, USA}
\email{rafael.delallave@math.gatech.edu}
\author[X. Su]{Xifeng Su}
\address{School of Mathematical Sciences, Laboratory of Mathematics and Complex Systems (Ministry of Education)\\
Beijing Normal University,
No. 19, XinJieKouWai St., HaiDian District, Beijing 100875, P. R. China}
\email{xfsu@bnu.edu.cn, billy3492@gmail.com}
\author[D. Wang]{Donghua Wang}
\address{School of Mathematical Sciences, Beijing Normal University,
No. 19, XinJieKouWai St., HaiDian District, Beijing 100875, P. R. China}
\email{wangdonghua@mail.bnu.edu.cn}
\author[D. Yao]{Dongyu Yao}
\address{School of Mathematical Sciences, Beijing Normal University,
No. 19, XinJieKouWai St., HaiDian District, Beijing 100875, P. R. China}
\email{yaodongyu@mail.bnu.edu.cn}
\date{\today}
\begin{document}
\maketitle


\begin{abstract}
We consider one dimensional chains of interacting particles subjected to one dimensional almost-periodic media.
We formulate and prove two KAM type theorems corresponding to both short-range and long-range interactions respectively. 
Both theorems presented have an a posteriori format and establish the existence of almost-periodic equilibria. The new part here is that the potential function is given by some almost-periodic function with infinitely many incommensurate frequencies. 

In both cases, we do not need to assume that the system is close to integrable. We will show that if there exists an approximate solution for the functional equations, which satisfies some appropriate non-degeneracy conditions, then a true solution nearby is obtained. This procedure may  be used to validate efficient numerical computations.

Moreover, to well understand the role of almost-periodic media which can be approximated by quasi-periodic ones, we present a different approach -- the step by step increase of complexity method -- to the study of the above results of the almost-periodic models.
\end{abstract}

\textbf{Keywords:} KAM theory, almost-periodic equilibria, short-range interactions, long-range interactions

\section{Introduction}
We consider the models in statistical mechanics \cite{Rue69} to describe one dimensional chains of interacting particles over a substratum. We assume that 
\begin{itemize}
\item the state of each site is given by a real variable;
\item the interactions are translation invariant;
\item the dependence on variables of the system is almost-periodic.
\end{itemize}
We could allow many-body interactions or even infinite range interactions where the interactions among far away particles decrease with the distances. These long-range interactions models admit a clear physical interpretation.

More precisely, the state of a system is given by $u=\{u_n\}_{n\in\mathbb Z}$ with $u_n\in\mathbb{R}$, and we call it  the configuration of that system. The physical states are then selected to be the critical points of a formal energy functional:
\begin{equation}\label{long}
\mathscr S(u)=\sum_{i\in\mathbb Z}\sum_{L=0}^{\infty}\widehat H_L(u_i\alpha, u_{i+1}\alpha,\cdots,u_{i+L}\alpha),
\end{equation}
where $\widehat H_L: \big(\mathbb T^{\mathbb N}\big)^{L+1}\to\mathbb R$ and $\alpha\in[0,1]^{\mathbb N}$ is rationally independent, that is, for any finite segments of $\alpha$ are rationally independent.

Note that the expression of the energy in \eqref{long} includes a sum over all $L$, which means each particle interacts with all the other particles.  The form of $\widehat H_L$ encodes the almost-periodic properties of the media and the form of the energy functional \eqref{long} encodes the fact that the energy is independent of shifting the index of the particles.

An interesting particular case of \eqref{long} is that each particle only interacts with the nearest neighbor particles.
These short-range interaction models have the same spirit as the famous Frenkel-Kontorova models \cite{FK39} for the periodic potential. That is, they could be obtained as an approximation of the long-range interaction models by arguing heuristically that the nearest terms are the most important. 

Indeed, let 
\[
\widehat{H}_0(u_n \alpha) = - \widehat{V}(u_n \alpha), \quad  \widehat{H}_1(u_n \alpha, u_{n+1}\alpha ) = \frac{1}{2} (u_n-u_{n+1})^2,\quad \widehat{H}_L=0\quad \forall L\geq2,
\]
and \eqref{long} becomes 
\begin{equation}\label{short}
\mathscr S(\{u_n\}_{n\in\mathbb Z})=\sum_{n\in\mathbb Z}[\frac12(u_n-u_{n+1})^2-V(u_n)],
\end{equation}
where $V$ is an almost-periodic function $V(\theta)=\widehat{V}(\theta\alpha)$, $\widehat V:\mathbb T^{\mathbb N}\to\mathbb R$, with infinite-dimensional rationally independent frequency $\alpha\in[0,1]^{\mathbb N}$.

Our goal in this paper is to show the existence of almost-periodic equilibrium solutions of both \eqref{long} and \eqref{short} under the assumption that the system is real analytic. 

As in \cite{ref2, SDlL12}, we are interested in finding ``line-like" configurations of the form 
 \[
 u_n=h(n\omega),\ \forall n\in\mathbb Z
 \]
 where $\omega\in\mathbb R$ is called the rotation number, $h(\theta)=\theta+\tilde h(\theta)$ and $\tilde h$ is an almost-periodic function with the basic frequency $\alpha\in[0,1]^{\mathbb N}$. The function $h$ is often referred as the ``hull'' function of the configuration in solid state physics.
 
Following \cite{Moser'66a, Moser'66b, Zeh75, ref1, ref6, ref7, ref2, SDlL12} and many others, we will obtain our main results, Theorems \ref{sr} and \ref{lr}, which are KAM type and will be presented in an a posteriori format. In fact, we  will prove that if there is an approximate solution of the equilibrium equations, which satisfies some non-degeneracy condition, then, we can find an exact solution near the approximate solution. The method of proof is based on the quasi-Newton method that overcomes the small divisors generated by the frequencies.

Different from previous works, we will encounter at the same time two difficulties below:
\begin{itemize}
\item the fact that the interaction is almost-periodic;
\item the fact that the interaction is long-range.
\end{itemize}

Note that the long-range difficulty is overcome both for the periodic interactions in \cite{ref6, ref7} and for the quasi-periodic interactions in \cite{SDlL12}. 

In this paper, we would mainly follow the idea of \cite{ref3} (see also \cite{MR4091501}), that is,
we first introduce the notion of infinite dimensional Diophantine vectors (see Section~\ref{sec: diophantine}) and we will revisit the KAM scheme to obtain the almost-periodic response equilibria with less analytic radius in Sections~\ref{S4} and \ref{S5} for short-range and  long-range cases respectively. See e.g. \cite{FontichLS, MR4026994, MR4227050, MR4227051, MR4666300, MR4710790} and so on for other recent works in almost-periodic KAM theory.

As for the almost-periodicity, one could also think of the definition of an almost-periodic function that is the limit of quasi-periodic ones with an increasing number of frequencies and obtain non-trivial almost-periodic solutions as in \cite{Poschel02} approximating by quasi-periodic solutions with an increasing number of frequencies. We do go this direction in Section~\ref{sec: finite approximation} and prove Theorem~\ref{new}, and we think it would be more efficient for numerical explorations of finding these almost-periodic equilibria.

In other directions, for instance, in \cite{Aubry90, Trevino19}, under the assumptions that the potential function $V$ is sufficiently enough, the authors use the anti-integrable limits approach to obtain multiple equilibria; \cite{GGP06}
uses topological methods to establish the existence of the minimal configurations for one-dimensional Fibonacci quasi-crystals, while \cite{GPT17} applies the ergodic optimization method to search minimal configurations.

\section{Preliminary}
\subsection{Function spaces}

Firstly, we will study the analytic function on the thickened infinite dimensional torus.
For $\rho>0$, we define the thickened infinite dimensional torus $\mathbb{T}^{\mathbb{N}}_\rho:=\{(\theta_j)_{j\in\mathbb{N}}\in\mathbb C^{\mathbb N}:{\rm Re}\theta_j\in\mathbb{T},|{\rm Im}\theta_j|\le\rho\langle\langle j\rangle\rangle^s,\forall j\in\mathbb{N}\}$, where $\mathbb{T}=\mathbb{R}/(2\pi\mathbb{Z})$, $\langle\langle j\rangle\rangle=\max\{|j|,1\}$, $s>0$. 

\begin{defi}
We denote the set of infinite integer vectors with finite support, $$\mathbb Z_*^{\mathbb N}:=\{k\in\mathbb Z^{\mathbb N}:|k|_s:=\sum_{j\in\mathbb{N}}\langle\langle j\rangle\rangle^s|k_j|<+\infty\}.$$ 
\end{defi}

\begin{defi}
    Given a Banach space $(X,\|\cdot\|_X)$, we define the space of analytic functions $\mathbb T_{\rho}^{\mathbb N}\to X$ as $$\mathcal H(\mathbb T_{\rho}^{\mathbb N},X):=\{ h(\sigma)=\sum_{k\in\mathbb Z_*^{\mathbb N}} h_ke^{ik \cdot \sigma}\in\mathcal F: \| h \|_{\rho}:=\sum_{k\in\mathbb Z_*^{\mathbb N}}\|h_k\|_Xe^{\rho|k|_s} < \infty \},$$ 
    where $\mathcal{F}$ denotes the space of pointwise absolutely convergent formal Fourier series
    $\mathbb T_{\rho}^{\mathbb N}\to X$, $$h(\sigma)=\sum_{k\in\mathbb Z_*^{\mathbb N}} h_ke^{ik \cdot \sigma},\quad h_k\in X.$$
    We denote $\mathscr A_{\rho}:=\mathcal{H}(\mathbb T_{\rho}^{\mathbb N},\mathbb C)$.
\end{defi}

For the purpose of discussing the properties of analytic functions, the following definitions are necessary.

\begin{defi}
    Given Banach spaces $X$ and $Y$, we define the space $\mathcal{M}_k(X,Y)$ of the $k$-linear and continuous forms endowed by $$\|M\|_{\mathcal{M}_k(X,Y)}:=\sup_{\|u_1\|_X,\cdots,\|u_k\|_X\le1}\|M(u_1,\cdots,u_k)\|_Y, \quad\forall M\in\mathcal{M}_k(X,Y).$$
\end{defi}

\begin{defi}
    We denote $\mathscr A_{\rho}^r$ as the subspace of $\mathscr A_{\rho}$, 
    $$\mathscr A_{\rho}^r:=\{ h\in\mathscr A_{\rho}: D^r h\in \mathcal{H}(\mathbb T_{\rho}^{\mathbb N},\mathcal M_r(l^{\infty}_{\mathbb C},\mathbb C)), \|h\|_{\mathscr A_{\rho}^r}:=\|D^r h\|_{\rho}+|\langle h \rangle|\},$$
    where $l^{\infty}_{\mathbb C}:=\{(a_\lambda)_{\lambda\in\mathbb N}\in\mathbb C^{\mathbb N}:\sup_i|a_i|<+\infty\}$, and $\langle h\rangle$ denotes the average of $h$, 
    $$\langle h \rangle:=\int_{\mathbb T^{\mathbb N}}h(\sigma) d\sigma:=\lim_{N\to +\infty} \frac{1}{(2\pi)^{N}}\int_{\mathbb T^N} h(\sigma) d\sigma_1\cdots d\sigma_N. $$
    
    And we denote by $\overset{\circ}{\mathscr A_\rho}$ the subset of $\mathscr A_\rho$ with zero average. 
\end{defi}

\begin{rmk}
    The existence of $\langle h\rangle$ is provided by Lemma 2.4 in \cite{ref5}. According to the definition of $\rho$-norm and Cauchy estimate, if $\iota>0$, we have $\mathscr A_{\rho}^{r-1}\subset\mathscr A_{\rho+\iota}^{r-1}\subset\mathscr A_{\rho}^r$.
\end{rmk}

Now, we can define the real analytic almost periodic function $f:\mathbb R\to\mathbb R$.

\begin{defi}\label{rational independent}
$\alpha=(\alpha_{\lambda})_{\lambda\in\mathbb N}\in\mathbb R^{\mathbb N}$ is said to be rationally independent if for any finite segments $(\alpha_{\lambda_1},\cdots,\alpha_{\lambda_n})$, we have $\sum_{i=1}^n\alpha_{\lambda_i}k_i\ne0$, for all $ (k_1,\cdots,k_n)\in\mathbb Z^n\setminus\{0\} $.  
\end{defi}

\begin{defi}
    The function $f:\mathbb R\to\mathbb R$ is said to be a real analytic almost periodic function in the strip $\rho$ with basic frequency $\alpha\in\mathbb R^{\mathbb N}$ and shell function $F: \mathbb T^{\mathbb N} \to \mathbb R$, if $f(\theta)=F(\alpha\theta)$, $\alpha$ is rational independent, and $F$ can be extended to an analytic function in $\mathscr A_{\rho}$. And we define the $\rho$-norm of $f$ as $\|f\|_{\rho}:=\|F\|_{\rho}$. We denote by $AP(\alpha)$ the set of real analytic almost periodic functions with the frequency $\alpha$.
\end{defi}

In order to evaluate the interactions between the particles, we need to introduce the analytic functions on $(\mathbb T_{\rho}^{\mathbb N})^{L+1}$, $L\in\mathbb N$.
\begin{defi}
    Given a Banach space $(X,\|\cdot\|_X)$, we define the space of analytic functions $({\mathbb T_{\rho}^{\mathbb N}})^{L+1}\to X$ as 

    \begin{align*}
        \mathcal{H}_{L}((\mathbb T_{\rho}^{\mathbb N})^{L+1},X):=&\{ H(\sigma_0,\sigma_1,\cdots,\sigma_L)=\sum_{(k_0,\cdots,k_L)\in(\mathbb Z_*^{\mathbb N})^{L+1}} H(k_0,\cdots,k_L) e^{i\sum_{j=0}^Lk_j\cdot\sigma_j}\in\mathcal{F}_L:\\
        &\|H\|_{L,\rho}:=\sum_{(k_0,\cdots,k_L)\in(\mathbb Z_*^{\mathbb N})^{L+1}}\|H(k_0,\cdots,k_L)\|_X e^{\sum_{j=0}^L\rho|k_j|_s}\},
    \end{align*}

    where $\mathcal F_L$ denotes the space of pointwise absolutely convergent formal Fourier series $(\mathbb T_{\rho}^{\mathbb N})^{L+1}\to X$, 
    $$H(\sigma_0,\sigma_1,\cdots,\sigma_L)=\sum_{(k_0,\cdots,k_L)\in({\mathbb Z_*^{\mathbb N}})^{L+1}} H(k_0,\cdots,k_L) e^{i\sum_{j=0}^Lk_j\cdot\sigma_j},\quad H(k_0,\cdots,k_L)\in X.$$
    We denote $\mathcal{H}_{L,\rho}:=\mathcal H( (\mathbb T_{\rho}^{\mathbb N})^{L+1} ,\mathbb C)$.
    
\end{defi}

\begin{defi}
    We denote $\mathcal{H}_{L,\rho}^r$ as the subspace of $\mathcal{H}_{L,\rho}$, $$\mathcal{H}_{L,\rho}^r:=\{ h\in\mathcal{H}_{L,\rho}:D^i H\in\mathcal{H}( (\mathbb T_{\rho}^{\mathbb N})^{L+1} , \mathcal{M}_r((l_{\mathbb C}^{\infty})^{L+1}, \mathbb C) ) ,i\le r\},$$
    where $(l_{\mathbb C}^{\infty})^{L+1}:=\{ (a_0,\cdots,a_L): a_i\in l_{\mathbb C}^{\infty},\,i=0,\cdots,L \}.$
\end{defi}

\subsection{Diophantine properties}\label{sec: diophantine}
The Diophantine vector is $\alpha\in[0,1]^{\mathbb N}$ such that
\begin{equation}\label{dio1}
|\alpha\cdot k|\ge\frac{\nu_0}{\prod_{j\in\mathbb N}(1+\langle\langle j\rangle\rangle^{1+\tau_0}|k_j|^{1+\tau_0})}, \quad \forall k\in\mathbb Z^{\mathbb N}\setminus\{0\}.
\end{equation} 
We are interested in the numbers  $\omega\in\mathbb R$  such that
\begin{equation}\label{dio2}
|\omega\alpha\cdot k-2n\pi|\ge\frac{\nu}{\prod_{j\in\mathbb N}(1+\langle\langle j\rangle\rangle^{1+\tau}|k_j|^{1+\tau})}, \quad\forall k\in\mathbb Z^{\mathbb N}\setminus\{0\},\ \forall n\in\mathbb Z,
\end{equation}
where $\alpha$ is a Diophantine vector, $\nu_0, \tau_0, \nu,\tau$ are positive numbers.

We denote by $\overline{\mathscr D}(\nu_0, \tau_0)$ the set of $\alpha\in [0,1]^{\mathbb N}$ satisfying \eqref{dio1}.
We denote by $\mathscr D(\nu,\tau;\alpha)$ the set of $\omega$ satisfying \eqref{dio2}  and $\mathscr D(\tau;\alpha)=\cup_{\nu>0}\mathscr D(\nu,\tau;\alpha)$. 

Actually, most $\alpha\in[0,1]^{\mathbb N}$ satisfy (\ref{dio1}) according to \cite{ref3,MR4091501}. More precisely,
\begin{lem}\label{l2}
For $\nu\in(0,1)$, $\tau>1$, there exists $C(\tau)>0$ such that $$m([0,1]^\mathbb N\setminus \overline{\mathscr D}(\nu,\tau))\le C(\tau)\nu,$$
where $m(\cdot)$ is the product probability measure.
\end{lem}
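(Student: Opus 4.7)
The plan is a Borel--Cantelli style measure estimate. First, note that the complement of $\overline{\mathscr D}(\nu,\tau)$ in $[0,1]^\mathbb N$ decomposes as the countable union
$$[0,1]^\mathbb N \setminus \overline{\mathscr D}(\nu,\tau) = \bigcup_{k \in \mathbb Z_*^\mathbb N \setminus \{0\}} B_k, \qquad B_k := \Bigl\{\alpha : |\alpha \cdot k| < \tfrac{\nu}{\prod_j(1+\langle\langle j\rangle\rangle^{1+\tau}|k_j|^{1+\tau})}\Bigr\}.$$
This union is genuinely countable because $s>0$ together with $k_j\in\mathbb Z$ forces every $k\in\mathbb Z_*^\mathbb N$ to have finite support (the integer condition plus $|k|_s<\infty$ leaves only zero-padded finite vectors). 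It therefore suffices to estimate each $m(B_k)$ and sum.

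Fixing $k\neq 0$, pick any index $j_0$ with $k_{j_0}\neq 0$. The condition defining $B_k$ depends on only finitely many coordinates and is affine in $\alpha_{j_0}$ with slope $k_{j_0}$, so by Fubini applied to the product measure, the one-dimensional slice in the $\alpha_{j_0}$ direction has Lebesgue measure at most $2\nu/(|k_{j_0}|\prod_j(1+\langle\langle j\rangle\rangle^{1+\tau}|k_j|^{1+\tau}))$; integrating the remaining (finitely many relevant) coordinates over $[0,1]$ and using $|k_{j_0}|\ge 1$ yields
$$m(B_k)\leq \frac{2\nu}{\prod_{j\in\mathbb N}(1+\langle\langle j\rangle\rangle^{1+\tau}|k_j|^{1+\tau})}.$$
Summing over $k$, and using Tonelli (all summands are non-negative) together with a monotone approximation by truncations $\{k:\mathrm{supp}(k)\subseteq\{0,\dots,N\}\}$, the sum factorizes as an infinite product:
$$\sum_{k\in\mathbb Z_*^\mathbb N}\prod_{j}\frac{1}{1+\langle\langle j\rangle\rangle^{1+\tau}|k_j|^{1+\tau}} = \prod_{j\in\mathbb N}\left(1+2\sum_{m=1}^{\infty}\frac{1}{1+\langle\langle j\rangle\rangle^{1+\tau}m^{1+\tau}}\right).$$

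Each factor on the right is bounded by $1+2\zeta(1+\tau)\langle\langle j\rangle\rangle^{-(1+\tau)}$, so the inequality $\log(1+x)\leq x$ gives that the full product is bounded by $\exp\!\bigl(2\zeta(1+\tau)\sum_{j}\langle\langle j\rangle\rangle^{-(1+\tau)}\bigr)$, which is finite because $\tau>1$ makes the sum convergent. Subtracting the $k=0$ contribution (which equals $1$) and multiplying by $2\nu$ yields the desired bound $m([0,1]^\mathbb N\setminus \overline{\mathscr D}(\nu,\tau))\le C(\tau)\nu$. The only genuine technical point is the factorization identity in the display above; since all entries are non-negative it follows cleanly from Tonelli combined with monotone convergence on truncated supports (using $a_{j,0}=1$ so that truncated partial products are nested), and the rest of the argument is routine one-variable bookkeeping.
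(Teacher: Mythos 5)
Your proof is correct. The paper itself does not supply a proof of this lemma: it is cited to \cite{ref3, MR4091501}, so there is no in-paper argument to compare against. Your argument is the standard Borel--Cantelli style estimate, and it is the natural counterpart to the proof the paper \emph{does} write out for the adjacent measure estimate on $\omega$. All the steps check out: the observation that $|k|_s<\infty$ together with $k_j\in\mathbb Z$ forces finite support (so the union over $k$ is countable and the condition in \eqref{dio1} only constrains finitely supported $k$, despite the $\mathbb Z^{\mathbb N}$ in the display); the Fubini slice in the $\alpha_{j_0}$-direction giving $m(B_k)\le 2\nu/(|k_{j_0}|\prod_j(1+\langle\langle j\rangle\rangle^{1+\tau}|k_j|^{1+\tau}))$ and the harmless use of $|k_{j_0}|\ge 1$; the Euler-product factorization of the non-negative sum by monotone convergence on truncated supports (legitimate precisely because $a_{j,0}=1$); and the bound $1+2\zeta(1+\tau)\langle\langle j\rangle\rangle^{-(1+\tau)}$ on each factor, yielding a finite product since $\sum_j\langle\langle j\rangle\rangle^{-(1+\tau)}<\infty$ (indeed $\tau>0$ already suffices here, so the stated hypothesis $\tau>1$ is more than enough). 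Subtracting the $k=0$ term and multiplying by $2\nu$ gives exactly $C(\tau)\nu$. No gaps.
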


We only need to prove that there are sufficiently many $\omega$ satisfying our Diophantine condition \eqref{dio2} for given $\alpha$ satisfying (\ref{dio1}). The below measure estimate means that $\mathscr D(\tau;\alpha)$ is of full Lebesgue measure. Hence our discussion is meaningful. 
\begin{lem}[Measure estimates]
If $\alpha\in [0,1]^{\mathbb N}$ satisfies (\ref{dio1}) and $\tau>\tau_0+1$, then $\mathscr D(\tau;\alpha)$ is of full Lebesgue measure. 
\end{lem}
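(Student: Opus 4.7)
The plan is to establish that for every $M>0$,
\[
m\bigl([-M,M]\setminus \mathscr D(\nu,\tau;\alpha)\bigr) \to 0 \quad \text{as} \quad \nu \to 0^+.
\]
Since $\mathscr D(\tau;\alpha)=\bigcup_{\nu>0}\mathscr D(\nu,\tau;\alpha)$ is a monotone union of nested sets, this forces $\mathscr D(\tau;\alpha)\cap[-M,M]$ to have full Lebesgue measure, and then the lemma follows by letting $M\to\infty$. So the content is a single quantitative measure estimate in the spirit of Borel--Cantelli.

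For fixed $k\in\mathbb Z_*^{\mathbb N}\setminus\{0\}$ and $n\in\mathbb Z$, introduce the resonant set
\[
B_{k,n}(\nu):=\Bigl\{\omega\in\mathbb R:|\omega\,\alpha\cdot k-2n\pi|<\tfrac{\nu}{P_k(\tau)}\Bigr\},\qquad P_k(\tau):=\prod_{j\in\mathbb N}\bigl(1+\langle\langle j\rangle\rangle^{1+\tau}|k_j|^{1+\tau}\bigr).
\]
By \eqref{dio1}, $\alpha\cdot k\neq 0$, so $B_{k,n}(\nu)$ is an interval of length $2\nu/(|\alpha\cdot k|\,P_k(\tau))$ centered at $2n\pi/(\alpha\cdot k)$. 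As $n$ varies, these centers form an arithmetic progression of step $2\pi/|\alpha\cdot k|$, and at most $M|\alpha\cdot k|/\pi+1$ of them land in $[-M,M]$. Multiplying count by length and using \eqref{dio1} in the form $|\alpha\cdot k|^{-1}\le P_k(\tau_0)/\nu_0$ for the single possible boundary contribution gives
\[
m\Bigl(\bigcup_{n\in\mathbb Z}B_{k,n}(\nu)\cap[-M,M]\Bigr)\le \frac{2M\nu}{\pi\,P_k(\tau)}+\frac{2\nu}{\nu_0}\,\frac{P_k(\tau_0)}{P_k(\tau)}.
\]

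To sum over $k$, note that a standard product expansion yields
\[
\sum_{k\neq 0}\frac{1}{P_k(\tau)}=\prod_{j\in\mathbb N}\Bigl(1+\sum_{l\neq 0}\frac{1}{1+\langle\langle j\rangle\rangle^{1+\tau}|l|^{1+\tau}}\Bigr)-1,
\]
whose inner sum is $O(\langle\langle j\rangle\rangle^{-(1+\tau)})$, so the product converges for $\tau>0$. For the second series I would use the elementary inequality
\[
\frac{1+\langle\langle j\rangle\rangle^{1+\tau_0}|l|^{1+\tau_0}}{1+\langle\langle j\rangle\rangle^{1+\tau}|l|^{1+\tau}}\le \frac{2}{\langle\langle j\rangle\rangle^{\tau-\tau_0}|l|^{\tau-\tau_0}}\qquad(l\neq 0),
\]
which reduces convergence of $\sum_{k\neq 0}P_k(\tau_0)/P_k(\tau)$ to convergence of $\sum_{j\in\mathbb N}\langle\langle j\rangle\rangle^{-(\tau-\tau_0)}\,\zeta(\tau-\tau_0)$. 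Both the $\zeta$-factor and the $j$-sum are finite precisely when $\tau-\tau_0>1$, i.e.\ exactly under the hypothesis. Summing the two terms, we get
\[
m\bigl([-M,M]\setminus\mathscr D(\nu,\tau;\alpha)\bigr)\le C(M,\alpha,\tau,\tau_0)\,\nu,
\]
and the lemma follows on sending $\nu\to 0^+$.

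The main obstacle, and what exactly pins down the threshold $\tau>\tau_0+1$, is the boundary contribution from resonance lines with very small $|\alpha\cdot k|$: the Diophantine assumption on $\alpha$ costs a factor $P_k(\tau_0)$ in the numerator, and convergence of $\sum_k P_k(\tau_0)/P_k(\tau)$ over finite-support integer multi-indices demands simultaneously $\zeta(\tau-\tau_0)<\infty$ and $\sum_j\langle\langle j\rangle\rangle^{-(\tau-\tau_0)}<\infty$, each of which is sharp at $\tau-\tau_0=1$. Once this summability is secured, the argument is routine.
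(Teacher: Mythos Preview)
Your proposal is correct and follows essentially the same route as the paper's proof: bound each resonant set $B_{k,n}(\nu)$, count the $n$ that meet a fixed bounded interval, use the Diophantine hypothesis \eqref{dio1} on $\alpha$ to control the $|\alpha\cdot k|^{-1}$ factor coming from the boundary contribution, and then sum over $k$ via the product expansion, where convergence of $\sum_k P_k(\tau_0)/P_k(\tau)$ is exactly what forces $\tau>\tau_0+1$. The only cosmetic differences are that the paper works on intervals $[A,1.01A]$ rather than $[-M,M]$ and carries a ``$+2$'' instead of ``$+1$'' in the count of relevant $n$; the substance is identical.
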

\begin{proof}
Let $A>0$. Consider the sets 
\begin{equation*}
\begin{aligned}
\mathscr B_{k,n}&=\{\omega:|\omega\alpha\cdot k-2n\pi|<\frac{\nu}{\prod_{j\in\mathbb N}(1+\langle\langle j\rangle\rangle^{1+\tau}|k_j|^{1+\tau})}\}\\
&=\{\omega:|\omega-\frac{2n\pi}{\alpha\cdot k}|<\frac{\nu}{|\alpha\cdot k|\prod_{j\in\mathbb N}(1+\langle\langle j\rangle\rangle^{1+\tau}|k_j|^{1+\tau})}\}
\end{aligned}
\end{equation*}
and$$\mathscr B_{k,n,A}=\mathscr B_{k,n}\cap [A,1.01A].$$
Obviously, $$[A,1.01A]\setminus\mathscr D(\nu,\tau;\alpha)=\cup_{k\in\mathbb Z^{\mathbb N}\setminus\{0\},n\in\mathbb N}\mathscr B_{k,n,A}.$$
Therefore, we only need to show $$|\cup_{k,n}\mathscr B_{k,n,A}|<\nu C(\tau,\tau_0,\nu_0,A).$$ 
We note $$|\mathscr B_{k,n}|=\frac{2\nu}{|\alpha\cdot k|\prod_{j\in\mathbb N}(1+\langle\langle j\rangle\rangle^{1+\tau}|k_j|^{1+\tau})}.$$
If $\mathscr B_{k,n,A}\ne\varnothing$, we get $A<\frac{2n\pi}{\alpha\cdot k}+\frac{\nu}{|\alpha\cdot k|\prod_{j\in\mathbb N}(1+\langle\langle j\rangle\rangle^{1+\tau}|k_j|^{1+\tau})}\le1.01A$ or $A\le\frac{2n\pi}{\alpha\cdot k}-\frac{\nu}{|\alpha\cdot k|\prod_{j\in\mathbb N}(1+\langle\langle j\rangle\rangle^{1+\tau}|k_j|^{1+\tau})}<1.01A$.
Then, $$\sharp\{n:\mathscr B_{k,n,A}\ne\varnothing\}\le0.02A\frac{|\alpha\cdot k|}{2\pi}+2.$$
Therefore, 
\begin{equation*}
\begin{split}
&\quad|\cup_{k,n}\mathscr B_{k,n,A}|\\
&\le\sum_{k,n,\mathscr B_{k,n,A}\ne\varnothing}\frac{2\nu}{|\alpha\cdot k|\prod_{j\in\mathbb N}(1+\langle\langle j\rangle\rangle^{1+\tau}|k_j|^{1+\tau})}\\
&\le\sum_{k\in\mathbb Z^{\mathbb N}\setminus\{0\}}\frac{2\nu}{|\alpha\cdot k|\prod_{j\in\mathbb N}(1+\langle\langle j\rangle\rangle^{1+\tau}|k_j|^{1+\tau})}\cdot(0.02A\frac{|\alpha\cdot k|}{2\pi}+2)\\
&\le\frac{2\nu\cdot0.02A}{2\pi}\sum_{k\in\mathbb Z^{\mathbb N}\setminus\{0\}}\frac{1}{\prod_{j\in\mathbb N}(1+\langle\langle j\rangle\rangle^{1+\tau}|k_j|^{1+\tau})}+\frac{4\nu}{\nu_0}\sum_{k\in\mathbb Z^{\mathbb N}}\prod_{j\in\mathbb N}\frac{1+\langle\langle j\rangle\rangle^{1+\tau_0}|k_j|^{1+\tau_0}}{1+\langle\langle j\rangle\rangle^{1+\tau}|k_j|^{1+\tau}}\\
&\le\frac{2\nu\cdot0.02A}{2\pi}\prod_{j\in\mathbb N}(1+\sum_{\substack{k_j=-\infty\\k_j\ne0}}^{\infty}\frac{1}{1+\langle\langle j\rangle\rangle^{1+\tau}|k_j|^{1+\tau}})\\
&\quad+\frac{4\nu}{\nu_0}\prod_{j\in\mathbb N}(1+\sum_{\substack{k_j=-\infty\\k_j\ne0}}^{+\infty}\frac{1+\langle\langle j\rangle\rangle^{1+\tau_0}|k_j|^{1+\tau_0}}{1+\langle\langle j\rangle\rangle^{1+\tau}|k_j|^{1+\tau}})\\
&\le\frac{2\nu\cdot0.02A}{2\pi}\prod_{j\in\mathbb N}(1+\frac{C_1(\tau)}{\langle\langle j\rangle\rangle^{1+\tau}})+\frac{4\nu}{\nu_0}\prod_{j\in\mathbb N}(1+\frac{C_2(\tau)}{\langle\langle j\rangle\rangle^{1+\tau}}+\frac{C_3(\tau,\tau_0)}{\langle\langle j\rangle\rangle^{\tau-\tau_0}})\\
&\le\frac{2\nu\cdot0.02A}{2\pi}C_4(\tau)+\frac{4\nu}{\nu_0}C_5(\tau,\tau_0)=\nu C(\tau,\tau_0,\nu_0,A).\qedhere 
\end{split}
\end{equation*}
\end{proof}

\subsection{Homological equations}
\begin{lem}\label{l1}
Let $\omega\in\mathbb{R},\alpha\in[0,1]^\mathbb{N}$, $|\omega\alpha\cdot k-2n\pi|\ge\frac\nu{\prod_{j\in\mathbb N}(1+\langle\langle j\rangle\rangle^{1+\tau}|k_j|^{1+\tau})}$ holds for $\forall k\in\mathbb Z^{\mathbb N}_*\setminus\{0\}$, $\forall n\in\mathbb Z$, where $\tau,\nu>0$. If $\hat\eta\in\overset{\circ}{\mathscr A_\rho}$, then equation 
\begin{equation}\label{homeq}
\hat\phi(\sigma+\omega\alpha)-\hat\phi(\sigma)=\hat\eta(\sigma)
\end{equation}
has the unique solution $\hat\phi$ with zero average, which satisfies
\begin{equation*}
\|\hat\phi\|_{\rho'}\le C\nu^{-1}{\rm exp}\left(\frac{\tilde\tau}{(\rho-\rho')^\frac1s}{\rm ln}\frac{\tilde\tau}{\rho-\rho'}\right)\|\hat\eta\|_\rho,
\end{equation*}
where $\tilde\tau=\tilde\tau(\tau,s)$.
\end{lem}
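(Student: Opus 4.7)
\medskip

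The plan is to solve equation \eqref{homeq} term by term in Fourier series and then control the resulting sum via the Diophantine condition. Writing $\hat\eta(\sigma)=\sum_{k\in\mathbb Z_*^{\mathbb N}\setminus\{0\}}\hat\eta_k e^{ik\cdot\sigma}$ (the $k=0$ term vanishes because $\hat\eta\in\overset{\circ}{\mathscr A_\rho}$) and likewise $\hat\phi(\sigma)=\sum_{k\neq 0}\hat\phi_k e^{ik\cdot\sigma}$ with zero average, plugging into \eqref{homeq} and matching coefficients forces
\[
\hat\phi_k=\frac{\hat\eta_k}{e^{i\omega\alpha\cdot k}-1},\qquad k\in\mathbb Z_*^{\mathbb N}\setminus\{0\},
\]
which establishes both existence and uniqueness of the zero-average solution at the formal level.

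Next I would turn the Diophantine hypothesis into a lower bound on the denominators. For each $k\neq 0$ choose $n\in\mathbb Z$ minimizing $|\omega\alpha\cdot k-2n\pi|$; the elementary inequality $|e^{it}-1|\ge \tfrac{2}{\pi}|t|$ for $|t|\le\pi$ yields
\[
|e^{i\omega\alpha\cdot k}-1|\ \ge\ \frac{2}{\pi}\,|\omega\alpha\cdot k-2n\pi|\ \ge\ \frac{C\nu}{\prod_{j\in\mathbb N}(1+\langle\langle j\rangle\rangle^{1+\tau}|k_j|^{1+\tau})}.
\]
Hence
\[
\|\hat\phi\|_{\rho'}\ \le\ C\nu^{-1}\sum_{k\neq 0}|\hat\eta_k|\,e^{\rho|k|_s}\cdot e^{-(\rho-\rho')|k|_s}\prod_{j\in\mathbb N}(1+\langle\langle j\rangle\rangle^{1+\tau}|k_j|^{1+\tau}).
\]
Factoring out $\|\hat\eta\|_\rho$, it remains to produce an explicit upper bound for
\[
\mathcal K(\delta):=\sup_{k\in\mathbb Z_*^{\mathbb N}\setminus\{0\}}\ e^{-\delta|k|_s}\prod_{j\in\mathbb N}\bigl(1+\langle\langle j\rangle\rangle^{1+\tau}|k_j|^{1+\tau}\bigr),\qquad \delta=\rho-\rho'.
\]

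The main obstacle, and the entire source of the unusual factor $\exp\bigl(\tilde\tau\,\delta^{-1/s}\log(\tilde\tau/\delta)\bigr)$, is a careful estimate of $\mathcal K(\delta)$. Taking logarithms, one has to maximize
\[
\sum_{j:\,k_j\neq 0}\Bigl[(1+\tau)\log\bigl(\langle\langle j\rangle\rangle|k_j|\bigr)+\log 2\Bigr]\ -\ \delta\sum_{j}\langle\langle j\rangle\rangle^s|k_j|
\]
over $k\in\mathbb Z_*^{\mathbb N}\setminus\{0\}$. The optimization splits: because the exponential penalty involves $\langle\langle j\rangle\rangle^s|k_j|$ but the polynomial gain grows only logarithmically in $\langle\langle j\rangle\rangle|k_j|$, only finitely many indices $j$ can contribute at the maximum, namely those with $\langle\langle j\rangle\rangle^s\lesssim 1/\delta$, i.e.\ $j\lesssim \delta^{-1/s}$. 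For each such $j$ a one-variable calculus exercise shows that the optimal $|k_j|$ is of order $\tilde\tau/(\delta\langle\langle j\rangle\rangle^s)$, producing a contribution $\lesssim \tilde\tau\log(\tilde\tau/\delta)$. Summing over the $O(\delta^{-1/s})$ active indices gives
\[
\log \mathcal K(\delta)\ \le\ C\,\tilde\tau\,\delta^{-1/s}\log(\tilde\tau/\delta),
\]
with $\tilde\tau=\tilde\tau(\tau,s)$, which is exactly the claimed bound. Combining this with the previous display yields the stated estimate for $\|\hat\phi\|_{\rho'}$.

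I expect the optimization bookkeeping in the last paragraph to be the only delicate step; everything else (existence/uniqueness of the formal solution, the lower bound on $|e^{it}-1|$, convergence in $\mathscr A_{\rho'}$) is routine once the Fourier series is written down.
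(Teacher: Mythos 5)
Your proposal is correct and follows essentially the same route as the paper: the same Fourier-coefficient formula $\hat\phi_k=\hat\eta_k/(e^{i\omega\alpha\cdot k}-1)$, the same lower bound $|e^{it}-1|\gtrsim\operatorname{dist}(t,2\pi\mathbb Z)$, and the same reduction to bounding $\sup_k e^{-(\rho-\rho')|k|_s}\prod_j(1+\langle\langle j\rangle\rangle^{1+\tau}|k_j|^{1+\tau})$. The only difference is that the paper outsources this last supremum bound to a cited result (Lemma B.1 of \cite{ref5}), whereas you sketch the one-variable optimization and index-counting that underlies it; your sketch is sound.
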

\begin{proof}
Substitute $\hat\phi(\sigma)=\sum_{k\in\mathbb Z^\mathbb{N}_*}\hat\phi_ke^{ik\cdot\sigma}$, $\hat\eta(\sigma)=\sum_{k\in\mathbb Z^\mathbb{N}_*}\hat\eta_ke^{ik\cdot\sigma}$ into the homological equation (\ref{homeq}),we obtain that
$$\hat\phi_k=\frac{\hat\eta_k}{e^{ik\cdot\omega\alpha}-1},$$
for all $k\in\mathbb Z^\mathbb{N}_*\setminus\{0\}$.

Note that there exists $C>0$, such that for all $t\in\mathbb R$, we have $C|e^{it}-1|\ge{\rm dist}(t,2\pi\mathbb Z)$ (taking $C=\frac{\pi}{2}$ is enough). Therefore,
\begin{equation*}
\begin{split}
|\hat\phi_k|\le&C{\rm dist}(k\cdot\omega\alpha,2\pi\mathbb Z)^{-1}|\hat\eta_k|\\
\le&C\nu^{-1}\prod_{j\in\mathbb N}(1+\langle\langle j\rangle\rangle^{1+\tau}|k_j|^{1+\tau})|\hat\eta_k|.
\end{split}
\end{equation*}
When $\hat\phi_0=0$, i.e. $\int_{\mathbb T_\rho^\mathbb{N}}\hat\phi(\sigma)d\sigma=0$, 
\begin{equation*}
\begin{split}
\|\hat\phi\|_{\rho'}=&\sum_{k\in\mathbb Z^\mathbb{N}_*}e^{\rho'|k|_s}|\hat\phi_k|=\sum_{k\in\mathbb Z^\mathbb{N}_*\setminus\{0\}}e^{\rho'|k|_s}|\hat\phi_k|\\
\le&\sum_{k\in\mathbb Z^\mathbb{N}_*\setminus\{0\}}e^{\rho'|k|_s}C\nu^{-1}\prod_{j\in\mathbb N}(1+\langle\langle j\rangle\rangle^{1+\tau}|k_j|^{1+\tau})|\hat\eta_k|\\
=&\sum_{k\in\mathbb Z^\mathbb{N}_*\setminus\{0\}}e^{\rho|k|_s}|\hat\eta_k|e^{-(\rho-\rho')|k|_s}C\nu^{-1}\prod_{j\in\mathbb N}(1+\langle\langle j\rangle\rangle^{1+\tau}|k_j|^{1+\tau})\\
\le&\mathop{\rm sup}\limits_{k\in\mathbb Z^\mathbb{N}_*\setminus\{0\}}e^{-(\rho-\rho')|k|_s}\prod_{j\in\mathbb N}(1+\langle\langle j\rangle\rangle^{1+\tau}|k_j|^{1+\tau})C\nu^{-1}\|\hat\eta\|_\rho\\
\le&C\nu^{-1}{\rm exp}\left(\frac{\tilde\tau}{(\rho-\rho')^{\frac1s}}{\rm ln\frac{\tilde\tau}{\rho-\rho'}}\right)\|\hat\eta\|_\rho,
\end{split}
\end{equation*}
where the final step is due to \cite{ref5} Lemma B.1, $\tilde\tau=\tilde\tau(\tau,s)$. By now we have finished the proof of the solution's existence, and then we are to prove the uniqueness. 

If there are two solutions with zero average, denoted by $\hat\phi,\hat\varphi$, then $$(\hat\phi-\hat\varphi)(\sigma+\omega\alpha)-(\hat\phi-\hat\varphi)(\sigma)=0,$$
then $$\|\hat\phi-\hat\varphi\|_{\rho'}=0,$$
then $\hat\phi=\hat\varphi$, which means that the solution is unique.
\end{proof}

\subsection{Weighted norm properties}
\begin{lem}\label{l3}
Let $\hat f,\hat g\in\mathscr A_\rho$, then $\|\hat f\cdot \hat g\|_\rho\le\|\hat f\|_\rho\|\hat g\|_\rho$.
\end{lem}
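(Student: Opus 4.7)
The plan is to verify the Banach algebra property for the weighted $\ell^1$-type norm $\|\cdot\|_\rho$ by a direct computation on Fourier series, very much in the spirit of the classical estimate for analytic functions on a thickened torus.

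First, I would write the two factors as absolutely convergent Fourier series in $\mathscr A_\rho$, say $\hat f(\sigma)=\sum_{k\in\mathbb Z_*^{\mathbb N}} f_k e^{ik\cdot\sigma}$ and $\hat g(\sigma)=\sum_{l\in\mathbb Z_*^{\mathbb N}} g_l e^{il\cdot\sigma}$. Since both series converge absolutely (that is precisely the meaning of $\|\hat f\|_\rho,\|\hat g\|_\rho<\infty$), the product can be reorganized as a Cauchy product,
\begin{equation*}
(\hat f\cdot\hat g)(\sigma)=\sum_{m\in\mathbb Z_*^{\mathbb N}}\Bigl(\sum_{k+l=m} f_k g_l\Bigr)e^{im\cdot\sigma},
\end{equation*}
where the inner sum only involves finitely many nonzero terms for each fixed $m$ whenever we restrict attention to partial sums; at the level of formal Fourier series the identity is immediate, and absolute convergence legitimizes the rearrangement. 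Note that closure of $\mathbb Z_*^{\mathbb N}$ under addition follows from $|k+l|_s\le|k|_s+|l|_s$, which is the key ingredient used below.

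Next, I would estimate the $\rho$-norm of the product by applying the triangle inequality coefficient-wise and then using the subadditivity of $|\cdot|_s$:
\begin{equation*}
\|\hat f\cdot\hat g\|_\rho=\sum_{m\in\mathbb Z_*^{\mathbb N}}\Bigl|\sum_{k+l=m} f_k g_l\Bigr|\,e^{\rho|m|_s}\le\sum_{m}\sum_{k+l=m}|f_k|\,|g_l|\,e^{\rho(|k|_s+|l|_s)}.
\end{equation*}
Regrouping the double sum as a product of sums over $k$ and $l$ (which is justified by Tonelli since every term is nonnegative) yields
\begin{equation*}
\|\hat f\cdot\hat g\|_\rho\le\Bigl(\sum_{k\in\mathbb Z_*^{\mathbb N}}|f_k|e^{\rho|k|_s}\Bigr)\Bigl(\sum_{l\in\mathbb Z_*^{\mathbb N}}|g_l|e^{\rho|l|_s}\Bigr)=\|\hat f\|_\rho\|\hat g\|_\rho.
\end{equation*}

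There is essentially no hard step here; the only subtlety worth spelling out is the verification that the coefficient-wise bound $|k+l|_s\le|k|_s+|l|_s$ holds under the definition $|k|_s=\sum_{j\in\mathbb N}\langle\langle j\rangle\rangle^s|k_j|$, which is immediate from $|k_j+l_j|\le|k_j|+|l_j|$ applied termwise. One must also remark that $\hat f\cdot\hat g$ belongs to $\mathcal F$ (pointwise absolutely convergent formal Fourier series), but this is a byproduct of the same computation, since the bound above shows the Fourier coefficients of $\hat f\cdot\hat g$ are summable against $e^{\rho|m|_s}$, hence a fortiori pointwise absolutely summable on $\mathbb T_\rho^{\mathbb N}$. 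Thus $\hat f\cdot\hat g\in\mathscr A_\rho$ together with the claimed inequality.
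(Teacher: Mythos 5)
Your proof is correct and is essentially identical to the paper's: both form the Cauchy product of the Fourier series, apply the triangle inequality $|k+l|_s\le|k|_s+|l|_s$ (the paper writes it as $|k|_s\le|k-l|_s+|l|_s$ after a shift of index), and then use nonnegativity to split the double sum into the product $\|\hat f\|_\rho\|\hat g\|_\rho$. The only cosmetic difference is your choice of indexing ($k+l=m$ versus the paper's $k-l$ convention).
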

\begin{proof}
It's easy to check that $$\hat f\cdot \hat g(\sigma)=\sum_{k,l\in\mathbb Z^\mathbb{N}_*}\hat f_{k-l}\hat g_le^{ik\cdot\sigma},$$
then $$\|\hat f\cdot \hat g\|_\rho\le\sum_{k,l\in\mathbb Z^\mathbb{N}_*}|\hat f_{k-l}||\hat g_l|e^{\rho|k|_s}.$$
Using the triangular inequality of $s$-norm, we obtain that
$$|k|_s\le|k-l|_s+|l|_s,$$
then $$e^{\rho|k|_s}\le e^{\rho|k-l|_s}e^{\rho|l|_s},$$
therefore 
\begin{equation*}
   \|\hat f\cdot\hat g\|_\rho\le\sum_{k,l\in\mathbb Z^\mathbb{N}_*}|\hat f_{k-l}||\hat g_l|e^{\rho|k|_s}\le\|f\|_\rho\|g\|_\rho. \qedhere
\end{equation*}
\end{proof}

\begin{lem}\label{l0}
    Let $\hat h\in\mathscr A_{\rho}$, then for all $0<\delta\le\rho$, $\|\hat h\|_{\rho}\le\|\hat h\|_{\rho-\delta}^{\frac12}\cdot \|\hat h\|_{\rho+\delta}^{\frac{1}{2}}$.
\end{lem}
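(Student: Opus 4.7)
The inequality is a log-convexity (Hadamard three-lines) statement for the weighted $\ell^1$ norm $\|\hat h\|_\rho = \sum_{k\in\mathbb Z_*^{\mathbb N}} |\hat h_k|\,e^{\rho|k|_s}$. Since the only weight that depends on $\rho$ is the exponential factor, and that factor is exponential-linear in $\rho$, I expect a direct Cauchy--Schwarz argument to suffice.

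The plan is as follows. First, I would write out $\|\hat h\|_\rho$ as the defining series and split the exponential weight symmetrically around $\rho$:
\begin{equation*}
e^{\rho|k|_s} \;=\; e^{\frac{1}{2}(\rho-\delta)|k|_s}\cdot e^{\frac{1}{2}(\rho+\delta)|k|_s},
\end{equation*}
so that for each Fourier coefficient,
\begin{equation*}
|\hat h_k|\,e^{\rho|k|_s} \;=\; \bigl(|\hat h_k|\,e^{(\rho-\delta)|k|_s}\bigr)^{1/2}\cdot \bigl(|\hat h_k|\,e^{(\rho+\delta)|k|_s}\bigr)^{1/2}.
\end{equation*}

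Next, I would apply the Cauchy--Schwarz inequality to the sum over $k\in\mathbb Z_*^{\mathbb N}$, viewing the two factors as elements of $\ell^2$:
\begin{equation*}
\sum_{k} \bigl(|\hat h_k|\,e^{(\rho-\delta)|k|_s}\bigr)^{1/2}\bigl(|\hat h_k|\,e^{(\rho+\delta)|k|_s}\bigr)^{1/2} \le \Bigl(\sum_k |\hat h_k|\,e^{(\rho-\delta)|k|_s}\Bigr)^{1/2}\Bigl(\sum_k |\hat h_k|\,e^{(\rho+\delta)|k|_s}\Bigr)^{1/2}.
\end{equation*}
Recognizing the two factors on the right as $\|\hat h\|_{\rho-\delta}^{1/2}$ and $\|\hat h\|_{\rho+\delta}^{1/2}$ yields the claim.

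I do not foresee any real obstacle: everything is finite because $\hat h\in\mathscr A_\rho$ together with $\delta\le\rho$ guarantees that $\|\hat h\|_{\rho-\delta}$ and, by the definition of $\mathscr A_\rho$ (which demands $\rho+\delta$-convergence is not actually required for the RHS to control the LHS --- note that the statement assumes both norms on the right make sense, otherwise the inequality is trivially true). The only minor subtlety worth mentioning is the standing convention that, if $\|\hat h\|_{\rho+\delta}=+\infty$, the inequality is vacuous, so there is no integrability worry. This completes the proof sketch.
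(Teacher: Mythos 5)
Your proof is correct and uses the same Cauchy--Schwarz argument as the paper: split $e^{\rho|k|_s}$ symmetrically as $e^{\frac12(\rho-\delta)|k|_s}e^{\frac12(\rho+\delta)|k|_s}$ and apply Cauchy--Schwarz to the resulting series. No gap; this is exactly the paper's proof.
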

\begin{proof}
    By Cauchy-Schwarz inequality, we have:
    \begin{align*}
        \|\hat h\|_{\rho}&=\sum_{k\in\mathbb Z_*^{\mathbb N}}|\hat h_k|e^{\rho|k|_s}\\
        &\le(\sum_{k\in\mathbb Z_*^{\mathbb N}}|\hat h_k|e^{(\rho-\delta)|k|_s})^{\frac{1}{2}}\cdot (\sum_{k\in\mathbb Z_*^{\mathbb N}}|\hat h_k|e^{(\rho+\delta)|k|_s})^{\frac{1}{2}}=\|\hat h\|_{\rho-\delta}^{\frac12}\cdot \|\hat h\|_{\rho+\delta}^{\frac{1}{2}}. \qedhere
    \end{align*}    
\end{proof}

\begin{lem}[Cauchy estimates]\label{l4}
Let $\hat h(\sigma)=\sum\limits_{k\in\mathbb{Z}^\mathbb{N}_*}\hat h_k e^{ik\cdot\sigma}$, $\hat h\in\mathscr A_{\rho}^1$, $\alpha=(\alpha_\lambda)_{\lambda\in\mathbb{N}}\in[0,1]^\mathbb{N}$, for $\rho'<\rho$ we have that
$$
\|\partial_\alpha\hat h\|_{\rho'}\le(\rho-\rho')^{-1}\|\hat h\|_\rho.
$$
\end{lem}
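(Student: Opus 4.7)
\medskip

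The plan is to expand $\hat h$ in its Fourier series and compute $\partial_\alpha\hat h$ termwise, then bound the resulting series by extracting one factor of $|k|_s$ from the exponential weight. Since $\alpha\in[0,1]^{\mathbb N}$, I interpret $\partial_\alpha$ as the directional derivative $\partial_\alpha\hat h(\sigma)=\sum_{\lambda\in\mathbb N}\alpha_\lambda\,\partial_{\sigma_\lambda}\hat h(\sigma)$, and the differentiation under the sum is legitimate because $\hat h\in\mathscr A^1_\rho$ guarantees that the formal term-by-term differentiated series converges absolutely on $\mathbb T^{\mathbb N}_{\rho'}$.

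First I would write
\[
\partial_\alpha\hat h(\sigma)=\sum_{k\in\mathbb Z^{\mathbb N}_*}\hat h_k\,i(\alpha\cdot k)\,e^{ik\cdot\sigma},
\]
so that by the definition of the $\rho'$-norm,
\[
\|\partial_\alpha\hat h\|_{\rho'}\le\sum_{k\in\mathbb Z^{\mathbb N}_*}|\hat h_k|\,|\alpha\cdot k|\,e^{\rho'|k|_s}.
\]
Next I would estimate $|\alpha\cdot k|$ by exploiting $\alpha_\lambda\in[0,1]$ together with the inequality $|k_\lambda|\le\langle\langle\lambda\rangle\rangle^s|k_\lambda|$ (valid since $s>0$ and $\langle\langle\lambda\rangle\rangle\ge1$), which gives
\[
|\alpha\cdot k|\le\sum_{\lambda\in\mathbb N}|k_\lambda|\le\sum_{\lambda\in\mathbb N}\langle\langle\lambda\rangle\rangle^s|k_\lambda|=|k|_s.
\]

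The key step is then to absorb the factor $|k|_s$ into the loss of analyticity radius. Writing $e^{\rho'|k|_s}=e^{\rho|k|_s}\cdot e^{-(\rho-\rho')|k|_s}$ and using the elementary bound $\sup_{x\ge0}x\,e^{-\delta x}=\frac{1}{e\delta}$ with $\delta=\rho-\rho'$, we obtain
\[
|k|_s\,e^{-(\rho-\rho')|k|_s}\le\frac{1}{e(\rho-\rho')}\le\frac{1}{\rho-\rho'}.
\]
Substituting back yields
\[
\|\partial_\alpha\hat h\|_{\rho'}\le\frac{1}{\rho-\rho'}\sum_{k\in\mathbb Z^{\mathbb N}_*}|\hat h_k|\,e^{\rho|k|_s}=\frac{1}{\rho-\rho'}\|\hat h\|_\rho,
\]
which is the desired inequality.

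There is no real obstacle here beyond bookkeeping: the only subtle point is the termwise differentiation, which needs $\hat h\in\mathscr A^1_\rho$ so that the gradient series converges on the slightly smaller polydisk $\mathbb T^{\mathbb N}_{\rho'}$. Everything else is the standard Paley–Wiener/Cauchy trick of paying one power of $|k|_s$ against a shrinking of the strip.
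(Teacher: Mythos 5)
Your proof is correct and follows essentially the same route as the paper's: expand in Fourier series, bound $|\alpha\cdot k|\le\sum_\lambda|k_\lambda|\le|k|_s$ using $\alpha_\lambda\in[0,1]$ and $\langle\langle\lambda\rangle\rangle^s\ge1$, then absorb $|k|_s$ via $\sup_{x\ge0}x\,e^{-\delta x}\le1/\delta$ against the strip shrinkage. The paper invokes the slightly cruder bound $xe^{-ax}\le 1/a$ directly rather than passing through the sharp constant $1/(ea)$, but this is cosmetic.
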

\begin{proof}
Note that
$$
\partial_\alpha\hat h(\sigma)=\sum_{k\in\mathbb{Z}^\mathbb{N}_*}\hat h_ki(k\cdot\alpha) e^{ik\cdot\sigma}.
$$
Therefore,
\[
\begin{aligned}
\Vert \partial_\alpha\hat h\Vert_{\rho'}&=\sum_{k\in\mathbb{Z}^\mathbb{N}_*}|\hat h_k||k\cdot\alpha|e^{\rho'|k|_s}=\sum_{k\in\mathbb{Z}^\mathbb{N}_*}|\hat h_k||k\cdot\alpha|e^{\rho|k|_s}e^{-(\rho-\rho')|k|_s}\\
&\leq\sum_{k\in\mathbb{Z}^\mathbb{N}_*}|\hat h_k|\sum_{j\in\mathbb{N}}|k_j|e^{\rho|k|_s}e^{-(\rho-\rho')|k|_s}\leq\sum_{k\in\mathbb{Z}^\mathbb{N}_*}|\hat h_k||k|_se^{\rho|k|_s}e^{-(\rho-\rho')|k|_s}\\
&\leq\mathop{\rm sup}\limits_{k\in\mathbb Z^{\mathbb N}_*}|k|_se^{-(\rho-\rho')|k|_s}\Vert\hat h\Vert_\rho\leq(\rho-\rho')^{-1}\Vert\hat h\Vert_\rho,
\end{aligned}
\]
where the final step is due to the observation that $xe^{-ax}\le \frac1a$ holds for $\forall x\in\mathbb R$, when $a>0$.
\end{proof}

\begin{lem}\label{l5}
Let $\hat h\in\mathscr A_\rho$, $\widehat U\in\mathscr A_{\rho+\|\hat h\|_{\rho}+\iota}^2$, for some $\iota>0$. Define the operator $\Phi$ acting on analytic functions by $(\Phi[\hat h])(\sigma)=\widehat U(\sigma+\alpha\cdot\hat h(\sigma))$.

If $\|\hat h^*-\hat h\|_\rho \le\iota$, then $\Phi[\hat h^*]\in \mathscr A_\rho$.

If $\|\hat h^*-\hat h\|_\rho \le\iota$, $(D\Phi[\hat h]\widehat \Delta)(\sigma):=\partial_\alpha\widehat U(\sigma+\alpha\hat h(\sigma))\widehat \Delta(\sigma),$ then
\[
\|\Phi[\hat h^*]-\Phi[\hat h]-D\Phi[\hat h](\hat h^*-\hat h)\|_\rho\le \|\widehat U\|_{\mathscr A_{\rho+\|\hat h\|_{\rho}+\iota}^2}\cdot\|\hat h^*-\hat h\|_\rho^2.
\]
\end{lem}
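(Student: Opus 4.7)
The plan is to reduce both assertions to a single \emph{substitution estimate}: composing an analytic function with a small shift in its argument produces another analytic function whose $\rho$-norm is controlled by the norm of the original in a slightly enlarged strip. Once that is in hand, the quadratic error bound follows from a scalar-variable second-order Taylor expansion combined with the algebra property of Lemma~\ref{l3}.

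For the first claim, I would expand $\widehat U$ into its Fourier series $\widehat U(\theta)=\sum_{k\in\mathbb Z_*^{\mathbb N}}\widehat U_k\,e^{ik\cdot\theta}$ and write
\[
\Phi[\hat h^*](\sigma)=\sum_{k\in\mathbb Z_*^{\mathbb N}}\widehat U_k\,e^{ik\cdot\sigma}\,e^{i(k\cdot\alpha)\hat h^*(\sigma)}.
\]
Iterating Lemma~\ref{l3} on the power series $e^z=\sum_{n\ge 0}z^n/n!$, and using $|k\cdot\alpha|\le\sum_j|\alpha_j||k_j|\le\sum_j\langle\langle j\rangle\rangle^s|k_j|=|k|_s$ (which is valid since $\alpha_j\in[0,1]$ and $\langle\langle j\rangle\rangle^s\ge 1$), I get $\|e^{i(k\cdot\alpha)\hat h^*}\|_\rho\le e^{|k|_s\|\hat h^*\|_\rho}$. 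Combined with $\|\hat h^*\|_\rho\le\|\hat h\|_\rho+\iota$ this yields the substitution bound
\[
\|\Phi[\hat h^*]\|_\rho\le\sum_{k\in\mathbb Z_*^{\mathbb N}}|\widehat U_k|\,e^{(\rho+\|\hat h\|_\rho+\iota)|k|_s}=\|\widehat U\|_{\rho+\|\hat h\|_\rho+\iota}<\infty,
\]
so $\Phi[\hat h^*]\in\mathscr A_\rho$.

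For the second claim, fix $\sigma\in\mathbb T_\rho^{\mathbb N}$ and consider the scalar analytic function $y\mapsto f(y):=\widehat U(\sigma+\alpha y)$, for which $f'(y)=\partial_\alpha\widehat U(\sigma+\alpha y)$ and $f''(y)=\partial_\alpha^2\widehat U(\sigma+\alpha y)=D^2\widehat U(\sigma+\alpha y)(\alpha,\alpha)$. The one-dimensional Taylor formula with integral remainder, applied between $y=\hat h(\sigma)$ and $y=\hat h^*(\sigma)$, gives
\[
\bigl(\Phi[\hat h^*]-\Phi[\hat h]-D\Phi[\hat h](\hat h^*-\hat h)\bigr)(\sigma)=(\hat h^*-\hat h)^2(\sigma)\int_0^1(1-t)\,\partial_\alpha^2\widehat U\bigl(\sigma+\alpha\hat h_t(\sigma)\bigr)\,dt,
\]
where $\hat h_t:=\hat h+t(\hat h^*-\hat h)$ satisfies $\|\hat h_t\|_\rho\le\|\hat h\|_\rho+\iota$. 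Since $\widehat U\in\mathscr A^2_{\rho+\|\hat h\|_\rho+\iota}$, the function $\partial_\alpha^2\widehat U$ lies in $\mathscr A_{\rho+\|\hat h\|_\rho+\iota}$ with $\|\partial_\alpha^2\widehat U\|_{\rho+\|\hat h\|_\rho+\iota}\le\|\alpha\|_{l^\infty_{\mathbb C}}^2\|D^2\widehat U\|_{\rho+\|\hat h\|_\rho+\iota}\le\|\widehat U\|_{\mathscr A^2_{\rho+\|\hat h\|_\rho+\iota}}$. Applying the substitution estimate from the first part to $\partial_\alpha^2\widehat U$ and Lemma~\ref{l3} to $(\hat h^*-\hat h)^2$, together with $\int_0^1(1-t)\,dt=1/2$, produces exactly the stated bound, with a factor $1/2$ to spare.

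The one step that needs genuine care, rather than being routine, is justifying the interchange of pointwise evaluation at $\sigma$ (inside the $\rho$-norm), the $t$-integration, and the Fourier summation of $\partial_\alpha^2\widehat U$ after the inner composition; but this interchange is precisely what the substitution estimate above guarantees, since all sums converge absolutely in $\mathscr A_\rho$. Beyond that, the argument is a Banach-algebra-valued version of the standard scalar second-order Taylor remainder, and I do not anticipate any further obstacle.
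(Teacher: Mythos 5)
Your proof is correct and follows essentially the same structure as the paper's: establish a substitution estimate $\|\widehat U(\cdot+\alpha\hat h_t(\cdot))\|_\rho\le\|\widehat U\|_{\rho+\|\hat h_t\|_\rho}$ and then apply the second-order Taylor formula with integral remainder together with the algebra property of Lemma~\ref{l3}. The minor differences are that you prove the substitution estimate from scratch (via the Fourier expansion of $\widehat U$, the exponential power series, and the bound $|k\cdot\alpha|\le|k|_s$), whereas the paper simply cites an external composition lemma, and you use the single-integral remainder $\int_0^1(1-t)\,dt$ where the paper uses the equivalent double-integral form $\int_0^1\int_0^1 t\,ds\,dt$; both are correct and yield the same constant.
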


\begin{proof}
By \cite[Lemma 2.15]{ref5}, $\Phi[\hat h^*]\in \mathscr A_\rho$. Using \cite[Lemma 2.15]{ref5} and Lemma \ref{l3}, we have
\begin{equation*}
\begin{split}
&\|\Phi[\hat h^*]-\Phi[\hat h]-D\Phi[\hat h](\hat h^*-\hat h)\|_\rho\\
=&\|\int_0^1\int_0^1\partial_{\alpha}\partial_{\alpha}\widehat U(\sigma+\alpha\hat h(\sigma)+st\alpha(\hat h^*-\hat h))t(\hat h^*-\hat h)^2dsdt\|_{\rho}\\
\le&\int_0^1\int_0^1\|\partial_{\alpha}\partial_{\alpha}\widehat U(\sigma+\alpha\hat h(\sigma)+st\alpha(\hat h^*-\hat h))\|_{\rho}dsdt\cdot\|\hat h^*-\hat h\|_{\rho}^2\\
\le&\int_0^1\int_0^1\|\partial_{\alpha}\partial_{\alpha}\widehat U(\sigma)\|_{\rho+\|\hat h\|_\rho+\iota}dsdt\cdot\|\hat h^*-\hat h\|_{\rho}^2\le \|\widehat U\|_{\mathscr A_{\rho+\|\hat h\|_{\rho}+\iota}^2}\cdot\|\hat h^*-\hat h\|_\rho^2. \qedhere 
\end{split} 
\end{equation*}
\end{proof}

\begin{lem}\label{l7}
Let $\hat h\in\mathscr A_{\rho}$, $\widehat H\in\mathcal{H}_{L,\rho+\|\hat h\|_{\rho}+\iota}$, for some $\iota>0$. Define the operator $\Psi$ acting on analytic functions by $(\Psi[\hat h])(\sigma):=\widehat H(\sigma+\hat h(\sigma)\alpha,\sigma+\omega\alpha+\hat h(\sigma+\omega\alpha),\cdots,\sigma+L\omega\alpha+\hat h(\sigma+L\omega\alpha)\alpha)$, where $\alpha\in [0,1]^{\mathbb N}$, $\omega\in\mathbb R$.

If $\|\hat h^*-\hat h\|_{\rho}\le\iota$, then $\Psi[\hat h^*]\in\mathscr A_{\rho}$.

If $\|\hat h^*-\hat h\|_{\rho}\le\iota$, then $\|\Psi[\hat h^*]-\Psi[\hat h]\|_{\rho}\le(L+1)\|D\widehat H\|_{L,\rho+\|\hat h\|_{\rho}+\iota}\|\hat h^*-\hat h\|_{\rho}$.

If $\|\hat h^*-\hat h\|_{\rho}\le\iota$, $(D\Psi[\hat h]\widehat\Delta)(\sigma):=\sum_{j=0}^{L}\partial_{\alpha}^{(j)}\widehat H(\sigma+\hat h(\sigma)\alpha,\cdots,\sigma+L\omega\alpha+\hat h(\sigma+L\omega\alpha)\alpha)(\widehat\Delta(\sigma+j\omega\alpha))$,  then \[
\|\Psi[\hat h^*]-\Psi[\hat h]-D\Psi[\hat h](\hat h^*-\hat h)\|_\rho\le (L+1)^2 \|D^2\widehat H\|_{L,\rho+\|\hat h\|_{\rho}+\iota}\|\hat h^*-\hat h\|_\rho^2.
\]

\end{lem}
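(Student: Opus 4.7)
The plan is to adapt the single-argument reasoning of Lemma \ref{l5} to the $(L+1)$-argument composition defining $\Psi$. A useful preliminary observation is that shift by a real vector is an isometry of the $\rho$-norm: for $\tau\in\mathbb R^{\mathbb N}$ and $\hat f\in\mathscr A_\rho$,
\begin{equation*}
\|\hat f(\cdot+\tau)\|_\rho=\sum_{k\in\mathbb Z^{\mathbb N}_*}|\hat f_k|\,|e^{ik\cdot\tau}|\,e^{\rho|k|_s}=\|\hat f\|_\rho.
\end{equation*}
In particular, for each $j$ one has $\|\hat h(\cdot+j\omega\alpha)\|_\rho=\|\hat h\|_\rho$ and $\|(\hat h^*-\hat h)(\cdot+j\omega\alpha)\|_\rho=\|\hat h^*-\hat h\|_\rho\le\iota$. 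Consequently, along the affine homotopy $\hat h_t:=\hat h+t(\hat h^*-\hat h)$ with $t\in[0,1]$, every $\hat h_t(\cdot+j\omega\alpha)\alpha$ has $\rho$-norm at most $\|\hat h\|_\rho+\iota$, which is exactly the cushion built into the hypothesis $\widehat H\in\mathcal H_{L,\rho+\|\hat h\|_\rho+\iota}$. The first assertion then follows by applying the composition lemma (Lemma 2.15 of \cite{ref5}) slot by slot to the $L+1$ entries of $\widehat H$, in the same spirit as in the proof of Lemma \ref{l5}.

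For the Lipschitz bound, apply the fundamental theorem of calculus along $\hat h_t$ and use the chain rule to get
\begin{equation*}
\Psi[\hat h^*](\sigma)-\Psi[\hat h](\sigma)=\int_0^1\sum_{j=0}^{L}\partial_\alpha^{(j)}\widehat H\bigl(\sigma+\hat h_t(\sigma)\alpha,\ldots,\sigma+L\omega\alpha+\hat h_t(\sigma+L\omega\alpha)\alpha\bigr)\,(\hat h^*-\hat h)(\sigma+j\omega\alpha)\,dt.
\end{equation*}
Each factor $\partial_\alpha^{(j)}\widehat H$ is controlled in the $\|\cdot\|_{L,\rho+\|\hat h\|_\rho+\iota}$ norm by $\|D\widehat H\|_{L,\rho+\|\hat h\|_\rho+\iota}$; the composition lemma bounds the resulting composed function in $\mathscr A_\rho$ by the same quantity; and the algebra inequality (Lemma \ref{l3}) together with translation invariance bound the product by $\|D\widehat H\|_{L,\rho+\|\hat h\|_\rho+\iota}\|\hat h^*-\hat h\|_\rho$. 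Summing the $L+1$ terms produces the stated $(L+1)$ factor.

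For the Taylor remainder, use the second-order integral Taylor formula
\begin{equation*}
\Psi[\hat h^*]-\Psi[\hat h]-D\Psi[\hat h](\hat h^*-\hat h)=\int_0^1(1-t)\,\frac{d^2}{dt^2}\Psi[\hat h_t]\,dt,
\end{equation*}
and compute $(d^2/dt^2)\Psi[\hat h_t]$ as a double sum over $j,k\in\{0,\ldots,L\}$ of mixed second directional derivatives $\partial_\alpha^{(j)}\partial_\alpha^{(k)}\widehat H(\cdots)\,(\hat h^*-\hat h)(\sigma+j\omega\alpha)\,(\hat h^*-\hat h)(\sigma+k\omega\alpha)$. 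The same recipe---composition lemma, Lemma \ref{l3}, and translation invariance---bounds each of the $(L+1)^2$ summands by $\|D^2\widehat H\|_{L,\rho+\|\hat h\|_\rho+\iota}\|\hat h^*-\hat h\|_\rho^2$, yielding the claimed estimate (with a prefactor $1/2$ from $\int_0^1(1-t)dt$ absorbed). The main technical point throughout is bookkeeping: one must verify that the operator norm built into the definition of $\mathcal H_{L,\rho}^r$ dominates all the mixed directional partials $\partial_\alpha^{(j_1)}\cdots\partial_\alpha^{(j_r)}\widehat H$ that appear, which reduces to the observation that $\|\alpha\|_{l^{\infty}_{\mathbb C}}\le 1$, so that $\alpha$-directional derivatives are bounded above by the full $\mathcal M_r((l^{\infty}_{\mathbb C})^{L+1},\mathbb C)$-norm.
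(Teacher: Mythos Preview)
Your argument is correct, and for the Lipschitz and Taylor estimates it is exactly what the paper intends: the paper writes ``The remaining proofs are similar to Lemma \ref{l5}'', meaning precisely the integral mean-value and second-order Taylor formulas that you spell out, combined with the algebra property (Lemma \ref{l3}) and translation invariance.

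Where you and the paper diverge is in the first assertion. You dispose of $\Psi[\hat h^*]\in\mathscr A_\rho$ by invoking \cite[Lemma~2.15]{ref5} ``slot by slot''; but that lemma is stated for a single composition argument, and the phrase ``slot by slot'' hides the actual work of passing to $L+1$ simultaneous substitutions with the product norm $\|\cdot\|_{L,\rho}$. The paper instead proves this multi-slot composition estimate from scratch: it expands $\widehat H$ and each factor $e^{i\hat h(\sigma+j\omega\alpha)\alpha\cdot k_j}$ in Fourier series, identifies the Fourier coefficients of $\Psi[\hat h]$, and sums using $|k_1+\cdots+k_n|_s\le|k_1|_s+\cdots+|k_n|_s$ and $|\alpha\cdot k|\le|k|_s$ to obtain $\|\Psi[\hat h]\|_\rho\le\|\widehat H\|_{L,\rho+\|\hat h\|_\rho}$ directly. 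This explicit computation is the content of the lemma that is genuinely new relative to Lemma \ref{l5}; your outline is valid provided one understands your citation as ``the same Fourier-series argument carries over'', but as written it leans on a single-variable statement where the paper supplies the multivariable proof. Your approach is cleaner to read; the paper's buys self-containment and makes transparent exactly where the cushion $\|\hat h\|_\rho+\iota$ in the radius is consumed.
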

\begin{proof}
    Actually, we only need to prove that if $\hat h\in\mathscr A_{\rho}$, $\widehat H\in\mathcal{H}_{L,\rho+\|\hat h\|_{\rho}}$, then $\Psi[\hat h]\in\mathscr A_{\rho}$. The remaining proofs are similar to Lemma \ref{l5}.

    Note that
    \begin{equation}\label{aa}
        \begin{aligned}
            \Psi[\hat h](\sigma)&=\widehat H(\sigma+\hat h(\sigma)\alpha,\cdots,\sigma+L\omega\alpha+\hat h(\sigma+L\omega\alpha)\alpha)\\
            &=\sum_{k_0,\cdots,k_L\in\mathbb Z_*^{\mathbb N}}\widehat H(k_0,\cdots,k_L)e^{i\sigma\cdot k_0}\cdots e^{i\sigma\cdot k_L}e^{i\hat h(\sigma)\alpha\cdot k_0}\cdots e^{i(L\omega+\hat h(\sigma+L\omega\alpha))\alpha\cdot k_L}.
        \end{aligned}
    \end{equation}
    For all $k_j\in\mathbb Z_*^{\mathbb N}$, we have
    \begin{align*}
        e^{i\hat h(\sigma+j\omega\alpha)\alpha\cdot k_j}&=\sum_{n_j\in\mathbb N}\frac{i^{n_j}}{n_j!}\hat h(\sigma+j\omega\alpha)^{n_j}(\alpha\cdot k_j)^{n_j}\\
        &=\sum_{n_j\in\mathbb N} \frac{i^{n_j}}{n_j!}(\alpha\cdot k_j)^{n_j}\sum_{k_{j,1},\cdots,k_{j,n_j}\in\mathbb Z_*^{\mathbb N}}\hat h_{k_{j,1},}\cdots \hat h_{k_{j,n_j}}e^{i(k_{j,1}+\cdots+k_{j,n_j})\cdot(\sigma+j\omega\alpha)},
    \end{align*}    
    where $\hat h(\sigma)=\sum_{k\in\mathbb Z_*^{\mathbb N}}\hat h_ke^{ik\cdot\sigma}$.
    Substituting into the (\ref{aa}), we obtain
    \begin{align*}
        \Psi[\hat h](\sigma)&=\sum_{k_0,\cdots,k_L\in\mathbb Z_*^{\mathbb N}}\widehat H(k_0,\cdots,k_L)e^{i\sigma\cdot k_0}\cdots e^{i\sigma\cdot k_L}e^{i\omega\alpha\cdot k_1}\cdots e^{iL\omega\alpha\cdot k_L}\cdot\\
        &\quad\cdot \prod_{j=0}^{L}(\sum_{n_j\in\mathbb N}\frac{i^{n_j}}{n_j!}(\alpha\cdot k_j)^{n_j}\sum_{k_{j,1},\cdots,k_{j,n_j}\in\mathbb Z_*^{\mathbb N}}\hat h_{k_{j,1}}\cdots \hat h_{k_{j,n_j}}e^{i(k_{j,1}+\cdots+k_{j,n_j})\cdot(\sigma+j\omega\alpha)})\\
        &=\sum_{n_0,\cdots,n_L\in\mathbb N}(\prod_{j=0}^L\frac{i^{n_j}}{n_j!})\sum_{\substack{k_0,k_{0,1},\cdots,k_{0,n_0}\in\mathbb Z_*^{\mathbb N}\\ \cdots\\k_L,k_{L,1},\cdots,k_{L,n_L}\in\mathbb Z_*^{\mathbb N}}}(\widehat H(k_0,\cdots,k_L)\prod_{j=0}^L (\alpha\cdot k_j)^{n_j}\cdot\\
        &\quad\cdot\hat h_{k_{j,1}}\cdots\hat h_{k_{j,n_j}}e^{i(k_j+k_{j,1}+\cdots+k_{j,n_j})\cdot j\omega\alpha}e^{i(k_j+k_{j,1}+\cdots+k_{j,n_j})\cdot \sigma}).
    \end{align*}
    Since $\Psi[\hat h](\sigma)=\sum_{u\in\mathbb Z_*^{\mathbb N}}\widehat{\Psi[\hat h]}_u e^{iu\cdot\sigma}$, we get 
    \begin{align*}
        \widehat{\Psi[\hat h]}_u&=\sum_{n_0,\cdots,n_L\in\mathbb N}(\prod_{j=0}^L\frac{i^{n_j}}{n_j!})\sum_{\sum_{t=0}^L k_t+k_{t,1}+\cdots+k_{t,n_t}=u}(\widehat H(k_0,\cdots,k_L)\prod_{j=0}^L (\alpha\cdot k_j)^{n_j}\cdot\\
        &\quad\cdot\hat h_{k_{j,1}}\cdots\hat h_{k_{j,n_j}}e^{i(k_j+k_{j,1}+\cdots+k_{j,n_j})\cdot j\omega\alpha}).
    \end{align*}
    Using $|k_1+\cdots+k_n|_s\le|k_1|_s+\cdots|k_n|_s$ and $\|k\|_1\le |k|_s$, where $k_1,\cdots,k_n,k\in\mathbb Z_*^{\mathbb N}$, $k=(k^i)_{i\in\mathbb N}$, $\|k\|_1:=\sum_i |k^i|$, we obtain 
    \begin{align*}
        \|\Psi[\hat h]\|_{\rho}& = \sum_{u\in\mathbb Z_*^{\mathbb N}}e^{\rho|u|_s}|\widehat{\Psi[\hat h]}_u|\\
        &\le \sum_{u\in\mathbb Z_*^{\mathbb N}}e^{\rho|u|_s}\sum_{n_0,\cdots,n_L\in\mathbb N}(\prod_{j=0}^L\frac1{n_j!})\\&\quad\sum_{\sum_{t=0}^L(k_t+k_{t,1}+\cdots+k_{t,n_t})=u}|\widehat H(k_0,\cdots,K_L)|(\prod_{j=0}^{L}\|k_j\|_1^{n_j}|\hat h_{k_{j,1}}|\cdots|\hat h_{k_{j,n_j}}|)\\
        &\le\sum_{u\in\mathbb Z_*^{\mathbb N}}\sum_{n_0,\cdots,n_L\in\mathbb N}\sum_{\sum_{t=0}^L(k_t+k_{t,1}+\cdots+k_{t,n_t})=u}e^{\rho\sum_{t=0}^L(|k_t|_s+|k_{t,1}|_s+\cdots+|k_{t,n_t}|_s)}|\widehat H(k_0,\cdots,k_L)|\\
        &\quad\quad\cdot(\prod_{j=0}^L\frac{|k_j|_s^{n_j}}{n_j!}|\hat h_{k_{j,1}}|\cdots|\hat h_{k_{j,n_j}}|)\\
        &\le\sum_{k_0,\cdots,k_L\in\mathbb Z_*^{\mathbb N}}e^{\sum_{t=0}^L\rho|k_t|_s}|\widehat H(k_0,\cdots,k_L)|\sum_{n_0,\cdots,n_L\in\mathbb N}\\
        &\quad\quad(\prod_{j=0}^L\sum_{k_{j,1},\cdots,k_{j,n_j}\in\mathbb Z_*^{\mathbb N}}e^{\rho|k_{j,1}|_s+\cdots+\rho|k_{j,n_j}|_s}\frac{|k_j|_s^{n_j}}{n_j!}|\hat h_{k_{j,1}}|\cdots|\hat h_{k_{j,n_j}}|)\\
        &\le\sum_{k_0,\cdots,k_L\in\mathbb Z_*^{\mathbb N}}e^{\sum_{t=0}^L\rho|k_t|_s}|\widehat H(k_0,\cdots,k_L)|\sum_{n_0,\cdots,n_L\in\mathbb N}\prod_{j=0}^L\frac{|k_j|_s^{n_j}}{n_j!}\|\hat h\|_{\rho}^{n_j}\\
        &\le\sum_{k_0,\cdots,k_L\in\mathbb Z_*^{\mathbb N}}e^{\sum_{t=0}^L\rho|k_t|_s}|\widehat H(k_0,\cdots,k_L)|e^{\sum_{j=0}^L|k_j|_s\|\hat h\|_{\rho}}\\
        &=\|\widehat H\|_{L,\rho+\|\hat h\|_\rho}.
    \end{align*}

\end{proof}

\section{Short range Models with almost-periodic potential}\label{S4}
\subsection{Short range models}
Short range models are known as Frenkel-Kontorova models. In these models, we consider the particle only interacts with the nearest particles. The configuration of the system is described by $u=\{u_n\}_{n\in\mathbb Z}$ with $u_n\in\mathbb R$. The physical meaning of $u_n$ is the position of the particles. And we assumed the system is located in the almost-periodic potential field about position. Then, we can get the formal energy of the system (\ref{short}).

To obtain the critical points of the formal energy, we take the formal derivative of $\frac{\partial}{\partial u_n}\mathscr S(\{u_n\}_{n\in\mathbb Z})=0$, that is
\begin{equation}\label{short-equi}
u_{n+1}+u_{n-1}-2u_n+V'(u_n)=0
\end{equation}
In this way, our goal is to solve the equilibrium equation(\ref{short-equi}).

\subsection{Hull function and external forces}
Let the rotation number $\omega\in\mathbb R$, and consider the solution of the form $$u_n=h(n\omega),\ \forall n\in\mathbb Z.$$
$$h(\theta)=\theta+\tilde h(\theta),$$where $\theta:=n\omega$ and $\tilde h$ is an almost-periodic function with the basic frequency $\alpha\in[0,1]^{\mathbb N}$. The function $h$ is often referred as `hull' function of the configuration in solid state physics.

We can write $\tilde h$ in the form of Fourier series,$$\tilde h(\theta)=\sum_{k\in\mathbb Z_*^\mathbb{N}}\hat h_ke^{ik\cdot\theta\alpha}.$$Denote $\sigma:=\theta\alpha$, then $$\tilde h(\theta)=\sum_{k\in\mathbb Z_*^\mathbb{N}}\hat h_ke^{ik\cdot\sigma}=:\hat h(\sigma),$$where $\hat h:\mathbb T^{\mathbb N}\rightarrow\mathbb R$. Let $AP(\alpha)$ be the set of functions of the form of $\tilde h$. 

Denote $\partial_\alpha:=\alpha\cdot\nabla$, $\widehat V(\theta\alpha)=V(\theta)$, then equation (\ref{short-equi}) is equivalent to
\begin{equation}\label{2}
h(\theta+\omega)+h(\theta-\omega)-2h(\theta)+\partial_\alpha\widehat V(\alpha h(\theta))=0,
\end{equation}
or equivalently
\begin{equation}\label{3}
\hat h(\sigma+\omega\alpha)+\hat h(\sigma-\omega\alpha)-2\hat h(\sigma)+\partial_\alpha\widehat V(\sigma+\alpha\hat h(\sigma))=0.
\end{equation}

To solve equation (\ref{3}), we add an external parameter and define the below operator,
$$\mathcal E[\hat h,\lambda](\theta):=\hat h(\sigma+\omega\alpha)+\hat h(\sigma-\omega\alpha)-2\hat h(\sigma)+\widehat U(\sigma+\alpha\hat h(\sigma))+\lambda,$$
where $\widehat U:\mathbb T^{\mathbb N}\rightarrow \mathbb R,\ \lambda\in\mathbb R$.

Obviously, if we can solve the general case
\begin{equation}\label{4}
\mathcal E[\hat h,\lambda]=\hat h(\sigma+\omega\alpha)+\hat h(\sigma-\omega\alpha)-2\hat h(\sigma)+\widehat U(\sigma+\alpha\hat h(\sigma))+\lambda=0,
\end{equation}
then we can easily solve the particular case equation (\ref{3}), just let $\widehat U=\partial_\alpha \widehat V$, $\lambda=0$ and we can get it. Later, we will prove the Vanishing lemma which indicates that if $\widehat U=\partial_\alpha \widehat V$, then $\lambda=0$.

If $[\hat h(\sigma),\lambda]$ is a solution of equation (\ref{4}), it's easy to check that $[\hat h(\sigma+\beta\alpha)+\beta,\lambda]$ is also a solution for any $\beta\in\mathbb R$. Therefore, by choosing $\beta$ we can always choose our solution normalized in such a way that $$\mathop{\rm lim}\limits_{T\rightarrow\infty}\frac1{2T}\int_{-T}^T\tilde h(\theta)d\theta\equiv\int_{\mathbb T^{\mathbb N}}\hat h(\sigma)d\sigma=0.$$ Note that the choice of $\beta$ is unique. Therefore, we always assume $\hat h$ is normalized.

\subsection{The main theorem of short range models}
\begin{thm}[Short range KAM theorem]\label{sr}
Let $h(\theta)=\theta+\tilde h(\theta)$, $\tilde h(\theta)=\hat h(\alpha\theta)=\sum_{k\in\mathbb Z^\mathbb{N}_*}\hat h_ke^{ik\cdot\alpha\theta}$, $\hat h_0=0$, $\hat h\in\mathscr A_{\rho_0}^1$. $\alpha\in[0,1]^{\mathbb N}$ is rationally independent. $\widehat U\in\mathscr A_{\rho_0+\|\hat h\|_{\rho_0}+\iota}^2$, $\iota>0$ Denote $\hat l=1+\partial_\alpha\hat h$, $T_{x}(\sigma)=\sigma+x$, We assume the following: 
\begin{itemize}
\item[{\rm (H1)}]
Diophantine condition: $|\omega\alpha\cdot k-2n\pi|\ge\frac\nu{\prod_{j\in\mathbb N}(1+\langle\langle j\rangle\rangle^{1+\tau}|k_j|^{1+\tau})}$ holds for $\forall k\in\mathbb Z^\mathbb{N}_*\setminus\{0\}$, $\forall n\in\mathbb Z$, where $\tau,\nu>0$.
\item[{\rm (H2)}]
Non-degeneracy condition: $\|\hat l(\sigma)\|_{\rho_0}\le N^+$, $\|(\hat l(\sigma))^{-1}\|_{\rho_0}\le N^-$, $\big|\big<\frac1{\hat l\cdot\hat l\circ T_{-\omega\alpha}}\big>\big|\ge c>0$, where $N^+,N^-,c$ are called the condition numbers. 
\end{itemize}
If $\|\mathcal E[\hat h,\lambda]\|_{\rho_0}$ is small enough $(\|\mathcal E[\hat h,\lambda]\|_{\rho_0}\le\epsilon\le\epsilon^*(N^+,N^-,c,\tau,\nu,\iota,\rho_0,\|\widehat U\|_{\mathscr A_{\rho_0+\|\hat h\|_{\rho_0}+\iota}^2})$, then there exists an analytic function $\hat h^*\in\mathscr A_{\frac{\rho_0}{2}}$ and $\lambda^*\in\mathbb R$, such that 
$$\mathcal E[\hat h^*,\lambda^*]=0.$$
Moreover, $$\|\hat h-\hat h^*\|_\frac{\rho_0}{2}\le C_1(N^+,N^-,c,\tau,\nu,\iota,\rho_0,\|\widehat U\|_{\mathscr A_{\rho_0+\|\hat h\|_{\rho_0}+\iota}^2})\epsilon_0,$$
$$|\lambda-\lambda^*|\le C_2(N^+,N^-,c,\tau,\nu,\iota,\rho_0,\|\widehat U\|_{\mathscr A_{\rho_0+\|\hat h\|_{\rho_0}+\iota}^2})\epsilon_0.$$
The solution $[\hat h^*,\lambda^*]$ is the only solution of $\mathcal{E}[\hat h^*,\lambda^*]=0$ with zero average for $\hat h^*$ in a ball centered at $\hat h$ in $\mathscr A_{\frac{3\rho_0}{8}}$, i.e. $[\hat h^*,\lambda^*]$ is the unique solution in the set
$$\{ \hat g\in\mathscr A_{\frac{3}{8}\rho_0}: \langle \hat g\rangle=0, \|\hat g-\hat h\|_{\frac{3}{8}\rho_0}\le C_3(N^+,N^-,c,\tau,\nu,\iota,\rho_0,\|\widehat U\|_{\mathscr A_{\rho_0+\|\hat h\|_{\rho_0}+\iota}^2})  \},$$
where $C_1$, $C_2$, $C_3$ will be made explicit along the proof.
\end{thm}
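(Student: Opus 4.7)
The plan is a quasi-Newton iteration: given an approximate solution $(\hat h, \lambda)$ with error $e := \mathcal E[\hat h, \lambda]$ of size $\epsilon$, I will construct a correction $(\Delta, \lambda')$ so that the new error is of order $\epsilon^2$ up to a moderate loss of analyticity radius, and then iterate with a telescoping sequence of radii converging to $\rho_0/2$. The Fr\'echet derivative of $\mathcal E$ in $\hat h$ is
\[
\mathcal L \Delta := \Delta \circ T_{\omega\alpha} + \Delta \circ T_{-\omega\alpha} - 2\Delta + \partial_\alpha \widehat U(\cdot + \alpha\hat h)\,\Delta,
\]
and the central algebraic fact, obtained by applying $\partial_\alpha$ to $\mathcal E[\hat h, \lambda] = e$ and using $\hat l = 1 + \partial_\alpha \hat h$, is the identity $\mathcal L \hat l = \partial_\alpha e$. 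Thus $\hat l$ is an approximate kernel of $\mathcal L$, and exploiting this reduces the Newton equation to two scalar homological equations where Lemma~\ref{l1} applies directly.

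To solve $\mathcal L \Delta = -e - \lambda'$, I substitute $\Delta = \hat l\, W$. Expanding $\mathcal L(\hat l W)$ by a Leibniz-type identity and invoking $\mathcal L \hat l = \partial_\alpha e$ yields
\[
(\hat l \circ T_{\omega\alpha})\, v - (\hat l \circ T_{-\omega\alpha})(v \circ T_{-\omega\alpha}) = -e - \lambda' - W \partial_\alpha e, \qquad v := W \circ T_{\omega\alpha} - W,
\]
after using $W \circ T_{-\omega\alpha} - W = -v \circ T_{-\omega\alpha}$. The term $W\partial_\alpha e$ is quadratically small in $\epsilon$ and is discarded. The change of variable $U := \hat l \cdot (\hat l \circ T_{\omega\alpha})\,v$ collapses the left-hand side to $(U - U \circ T_{-\omega\alpha})/\hat l$, reducing the equation to the single homological form
\[
U - U \circ T_{-\omega\alpha} = -\hat l\,(e + \lambda'),
\]
solvable by Lemma~\ref{l1} once $\lambda' = -\langle \hat l\, e\rangle$ is chosen to annihilate the average of the right-hand side (using $\langle \hat l\rangle = 1$ from the normalization of $\hat h$). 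This determines $U$ up to an additive constant, which I fix to enforce $\langle v\rangle = \langle U/(\hat l \cdot \hat l \circ T_{\omega\alpha})\rangle = 0$. Solvability of this scalar equation for the constant is precisely (H2), since $\langle 1/(\hat l \cdot \hat l \circ T_{\omega\alpha})\rangle = \langle 1/(\hat l \cdot \hat l \circ T_{-\omega\alpha})\rangle$ by translation invariance of $\langle \cdot\rangle$. A second application of Lemma~\ref{l1} then recovers $W$ from $v = W \circ T_{\omega\alpha} - W$, and $\Delta = \hat l W$ closes the Newton step.

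Combining the two homological inversions with Lemma~\ref{l3} and the bounds in (H2), the correction obeys $\|\Delta\|_{\rho - \delta} \le C\,\exp\!\bigl(\tilde C \delta^{-1/s}\log(1/\delta)\bigr) \|e\|_\rho$ and $|\lambda'| \le C\|e\|_\rho$ with constants depending on $N^\pm, c, \nu, \tau, s$. Substituting into $\mathcal E[\hat h + \Delta, \lambda + \lambda']$, estimating the nonlinear remainder $\widehat U(\cdot + \alpha(\hat h + \Delta)) - \widehat U(\cdot + \alpha\hat h) - \partial_\alpha \widehat U(\cdot + \alpha\hat h)\Delta$ via Lemma~\ref{l5}, and controlling the discarded $W \partial_\alpha e$ via Lemma~\ref{l4}, one obtains the quadratic bound
\[
\|\mathcal E[\hat h + \Delta, \lambda + \lambda']\|_{\rho - \delta} \le C'\,\exp(\cdots)\,\|e\|_\rho^2.
\]
Choosing $\rho_n = (\rho_0/2)(1 + 2^{-n})$ gives a super-exponentially convergent iteration whose limit lies in $\mathscr A_{\rho_0/2}$, provided $\epsilon_0$ is small enough to absorb the accumulated Diophantine losses; the non-degeneracy bounds are propagated along the iteration by Lemma~\ref{l4}. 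Local uniqueness in the ball inside $\mathscr A_{3\rho_0/8}$ follows from the same linearized inversion applied to the difference of two putative zero-average solutions together with a contraction argument, the intermediate radius $3\rho_0/8$ providing the Cauchy room needed in that bound.

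The genuinely new difficulty, compared with the quasi-periodic setting of \cite{SDlL12}, is that Lemma~\ref{l1} produces a stretched-exponential loss $\exp(\tilde C \delta^{-1/s}\log(1/\delta))$ at each Newton step instead of the polynomial loss $\delta^{-\tau}$ of finite-dimensional KAM. The delicate point of the proof is therefore to calibrate the radii $\delta_n$ so that the product of these losses across all steps is still dominated by the quadratic Newton gain $\|e_n\| \le C_n \|e_{n-1}\|^2$; this forces the smallness threshold $\epsilon^*$ to depend sensitively on $s$, $\tau$, $\rho_0$ and the condition numbers, and keeping simultaneous track of $\mathscr A^r_\rho$-norms under the composition appearing in $\widehat U(\cdot + \alpha \hat h)$ (via Lemma~\ref{l5}) is the main technical bookkeeping.
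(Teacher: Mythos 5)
Your proposal is correct and follows essentially the same route as the paper's own proof: the key identity $\mathcal L\hat l=\partial_\alpha e$ (approximate kernel), the change of variable $\Delta=\hat l W$, the reduction to two scalar homological equations solved via Lemma~\ref{l1} with the average of $\lambda'$ and the additive constant fixed by (H2), discarding the quadratically small $W\partial_\alpha e$, the quadratic step estimate via Lemma~\ref{l5}, and the superexponential iteration calibrated against the stretched-exponential loss. The only differences are cosmetic: your auxiliary unknown $U$ is the paper's $\widehat W$ after a shift by $T_{\omega\alpha}$ and a sign; your radius schedule converges directly to $\rho_0/2$ while the paper's converges to $3\rho_0/4$ (both suffice); and you leave implicit the normalization of $W$ (equivalently the additive constant $\bar\beta$ in the paper) that enforces $\langle\Delta\rangle=0$, which is needed so the iterates remain normalized and the non-degeneracy quantities are controlled.
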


\subsection{Proof of  Theorem~\ref{sr}}
\subsubsection{Quasi-Newton iteration}
Consider the equation
\begin{equation}\label{5}
\mathcal E[\hat h,\lambda]=e,
\end{equation}
where $e$ is a ``small" function of $\theta$.

By the classical Newton iterative method, we consider the following equation:
\begin{equation}\label{6}
\widehat\Delta(\sigma+\omega\alpha)+\widehat\Delta(\sigma-\omega\alpha)-2\widehat\Delta(\sigma)+\partial_\alpha\widehat U(\sigma+\alpha\hat h(\sigma))\widehat\Delta(\sigma)+\delta=-e.
\end{equation}
If we can solve equation (\ref{6}), then $[\hat h+\widehat\Delta,\lambda+\delta]$ will be a better approximate solution of equation (\ref{4}). However, equation (\ref{6}) is hard to solve due to the term $\partial_\alpha\widehat U(\sigma+\alpha\hat h(\sigma))$ which doesn't have constant coefficients. Therefore we consider the following equation which is obtained by taking derivative of equation (\ref{5}), hoping to eliminate the term $\partial_\alpha\widehat U(\sigma+\alpha\hat h(\sigma))$:
\begin{equation}\label{7}
\partial_\alpha\hat h(\sigma+\omega\alpha)+\partial_\alpha\hat h(\sigma-\omega\alpha)-2\partial_\alpha\hat h(\sigma)+\partial_\alpha\widehat U(\sigma+\alpha\hat h(\sigma))(1+\partial_\alpha\hat h(\sigma))=e'(\theta).
\end{equation}

Denote $\hat l(\sigma)=1+\partial_\alpha\hat h(\sigma)$, then equation (\ref{7}) is equivalent to 
\begin{equation}\label{8}
\hat l(\sigma+\omega\alpha)+\hat l(\sigma-\omega\alpha)-2\hat l(\sigma)+\partial_\alpha\widehat U(\sigma+\alpha\hat h(\sigma))\hat l(\sigma)=e'(\theta).
\end{equation}
Substituting equation (\ref{8}) into equation (\ref{6}) and eliminating the term $\partial_\alpha\widehat U(\sigma+\alpha\hat h(\sigma))$, we obtain that
\begin{equation*}
\widehat\Delta(\sigma+\omega\alpha)+\widehat\Delta(\sigma-\omega\alpha)+\frac{e'(\theta)-\hat l(\sigma+\omega\alpha)-\hat l(\sigma-\omega\alpha)}{\hat l(\sigma)}\widehat\Delta(\sigma)=-e-\delta.
\end{equation*}

However, this equation is still hard to solve. As we can see that the term $e'(\theta)\widehat\Delta(\sigma)$ is small, we omit this term and obtain that
\begin{equation}\label{9}
\widehat\Delta(\sigma+\omega\alpha)+\widehat\Delta(\sigma-\omega\alpha)-\frac{\hat l(\sigma+\omega\alpha)+\hat l(\sigma-\omega\alpha)}{\hat l(\sigma)}\widehat\Delta(\sigma)=-e-\delta,
\end{equation}
which is called quasi-Newton equation.

We observe that equation (\ref{9}) is equivalent to the following system:
\begin{align}
\left(\frac{\widehat\Delta}{\hat l}\right)\circ T_{-\omega\alpha}-\left(\frac{\widehat\Delta}{\hat l}\right) &=\frac{\widehat W}{\hat l\cdot\hat l\circ T_{-\omega\alpha}},\label{10}\\
\widehat W\circ T_{\omega\alpha}-\widehat W &=\hat l\cdot(e+\delta).\label{11}
\end{align}

We write $\widehat W=\widehat W^0+\overline{\widehat W}$, where $\widehat W^0$ is a function with zero average and $\overline{\widehat W}$ is a constant, the average of $\widehat W$. Both equation (\ref{10}) and equation (\ref{11}) are homological equations. To use Lemma \ref{l1}, we should make sure the right hand side of equation (\ref{10}) and (\ref{11}) have zero average. Therefore, by equation (\ref{10}) we obtain
$$\overline{\widehat W}=-\frac{\big<\frac{\widehat W^0}{\hat l\cdot\hat l\circ T_{-\omega\alpha}}\big>}{\big<\frac1{\hat l\cdot\hat l\circ T_{-\omega\alpha}}\big>},$$
and by equation (\ref{11}) we obtain
$$\delta=-\frac{\big<\hat l\cdot e\big>}{\big<\hat l\big>}.$$

By the Fourier expansion of $\hat h$ and $\hat l=1+\partial_\alpha\hat h$, we obtain that $\hat l_k=\delta_{k,0}+ik\cdot\alpha\hat h_k$, where $\delta_{k,0}$ is the Kronecker sign. Therefore $\big<\hat l\big>=1$, $\delta=-\big\langle\hat l\cdot e\big\rangle.$

Then, we can apply Lemma (\ref{l1}) to solve equation (\ref{10}) and (\ref{11}), and estimate the solutions. We assume that $\tilde\beta$ is a solution of equation (\ref{10}), then $\tilde\beta+C$ is also a solution of equation (\ref{10}), where $C$ is an arbitrary constant. We choose $\bar\beta:=-\big<\tilde\beta\cdot\hat l\big>$, then $\tilde\beta+\bar\beta$ is also a solution of equation (\ref{10}). Therefore, $\widehat\Delta=(\tilde\beta+\bar\beta)\cdot\hat l$ and $\big<\widehat\Delta\big>=0$.

We hope that $[\hat h+\widehat\Delta,\lambda+\delta]$ is a better approximate solution of equation (\ref{4}), so we compute $\mathcal E[\hat h+\widehat\Delta,\lambda+\delta]$:
\begin{equation*}
\begin{split}
\mathcal E[\hat h+\widehat\Delta,&\lambda+\delta]\\
&=\mathcal E[\hat h,\lambda]+\widehat\Delta(\sigma+\omega\alpha)+\widehat\Delta(\sigma-\omega\alpha)-2\widehat\Delta(\sigma)+\delta\\
&\quad+\widehat U(\sigma+\alpha(\hat h+\widehat\Delta)(\sigma))-\widehat U(\sigma+\alpha\hat h(\sigma))\\
&=e+(-e)+\frac{\hat l(\sigma+\omega\alpha)+\hat l(\sigma-\omega\alpha)-2\hat l(\sigma)}{\hat l(\sigma)}\widehat\Delta(\sigma)\\
&\quad+\widehat U(\sigma+\alpha(\hat h+\widehat\Delta)(\sigma))-
\widehat U(\sigma+\alpha\hat h(\sigma))\\
&=\frac{e'(\theta)-\partial_\alpha\widehat U(\sigma+\alpha\hat h(\sigma))\hat l(\sigma)}{\hat l(\sigma)}\widehat\Delta(\sigma)+\widehat U(\sigma+\alpha(\hat h+\widehat\Delta)(\sigma))-
\widehat U(\sigma+\alpha\hat h(\sigma))\\
&=e'\frac{\widehat\Delta(\sigma)}{\hat l(\sigma)}+R,
\end{split}
\end{equation*}
where $R=\widehat U(\sigma+\alpha(\hat h+\widehat\Delta)(\sigma))-
\widehat U(\sigma+\alpha\hat h(\sigma))-\partial_\alpha\widehat U(\sigma+\alpha\hat h(\sigma))\widehat\Delta(\sigma)$.

\subsubsection{Estimates for one iterative step}
\paragraph{\textbf{Estimates for solutions}}
For equation (\ref{11}), using Lemma \ref{l1}, for $\forall 0<\rho'<\rho$, we have
\begin{equation*}
\begin{split}
\|\widehat W^0\|_{\rho'}&\le C\nu^{-1}{\rm exp}\left(\frac{\tilde\tau}{(\rho-\rho')^\frac1s}{\rm ln}\frac{\tilde\tau}{\rho-\rho'}\right)N^+\|e+\delta\|_\rho\\
&\le C\nu^{-1}{\rm exp}\left(\frac{\tilde\tau}{(\rho-\rho')^\frac1s}{\rm ln}\frac{\tilde\tau}{\rho-\rho'}\right)N^+(\|e\|_\rho+|\big<\hat l\cdot e\big>|)\\
&\le C\nu^{-1}{\rm exp}\left(\frac{\tilde\tau}{(\rho-\rho')^\frac1s}{\rm ln}\frac{\tilde\tau}{\rho-\rho'}\right)N^+(\|e\|_\rho+N^+\|e\|_\rho)\\
&=C\nu^{-1}{\rm exp}\left(\frac{\tilde\tau}{(\rho-\rho')^\frac1s}{\rm ln}\frac{\tilde\tau}{\rho-\rho'}\right)N^+(1+N^+)\|e\|_\rho.
\end{split}
\end{equation*}

We have the estimate for $\overline{\widehat W}$:
\begin{equation*}
\begin{split}
|\overline{\widehat W}|&\le\frac1c\|\widehat W^0\|_{\rho'}(N^-)^2\\
&\le \frac1c(N^-)^2 C\nu^{-1}{\rm exp}\left(\frac{\tilde\tau}{(\rho-\rho')^\frac1s}{\rm ln}\frac{\tilde\tau}{\rho-\rho'}\right)N^+(1+N^+)\|e\|_\rho.
\end{split}
\end{equation*}

Therefore, we obtain the estimate for $\widehat W$:
\begin{equation*}
\begin{split}
\|\widehat W\|_{\rho'}&\le\|\widehat W^0\|_{\rho'}+|\overline{\widehat W}|\\
&\le\left(1+\frac1c(N^-)^2\right)C\nu^{-1}{\rm exp}\left(\frac{\tilde\tau}{(\rho-\rho')^\frac1s}{\rm ln}\frac{\tilde\tau}{\rho-\rho'}\right)N^+(1+N^+)\|e\|_\rho.
\end{split}
\end{equation*}

Similarly, for equation (\ref{10}), using Lemma \ref{l1}, for $\forall 0<\rho''<\rho'$, we have
\begin{equation*}
\begin{split}
\|\tilde\beta\|_{\rho''}&\le C\nu^{-1}{\rm exp}\left(\frac{\tilde\tau}{(\rho'-\rho'')^\frac1s}{\rm ln}\frac{\tilde\tau}{\rho'-\rho''}\right)(N^-)^2\|\widehat W\|_{\rho'}\\
&\le\left(1+\frac1c(N^-)^2\right)C^2\nu^{-2}N^+(1+N^+)(N^-)^2\\
&\quad\cdot{\rm exp}\left(\frac{\tilde\tau}{(\rho'-\rho'')^\frac1s}{\rm ln}\frac{\tilde\tau}{\rho'-\rho''}+\frac{\tilde\tau}{(\rho-\rho')^\frac1s}{\rm ln}\frac{\tilde\tau}{\rho-\rho'}\right)\|e\|_\rho.
\end{split}
\end{equation*}
Particularly, if we take $\rho-\rho'=\rho'-\rho''$, we have
\begin{equation*}
\begin{split}
\|\tilde\beta\|_{\rho''}&\le\left(1+\frac1c(N^-)^2\right)C^2\nu^{-2}N^+(1+N^+)(N^-)^2\\
&\quad\cdot{\rm exp}\left(2\frac{\tilde\tau}{(\frac{\rho-\rho''}2)^\frac1s}{\rm ln}\frac{2\tilde\tau}{\rho-\rho''}\right)\|e\|_\rho\\
&=\left(1+\frac1c(N^-)^2\right)C^2\nu^{-2}N^+(1+N^+)(N^-)^2\\
&\quad\cdot{\rm exp}\left(2^{1+\frac1s}\frac{\tilde\tau}{(\rho-\rho'')^\frac1s}{\rm ln}\frac{2\tilde\tau}{\rho-\rho''}\right)\|e\|_\rho,
\end{split}
\end{equation*}
and
\begin{equation}\label{12}
\begin{split}
\|\widehat\Delta\|_{\rho''}&=\|(\tilde\beta+\bar\beta)\cdot\hat l\|_{\rho''}\le N^+(\|\tilde\beta\|_{\rho''}+|\bar\beta|)\\
&=N^+\left(\|\tilde\beta\|_{\rho''}+\big|\big<\tilde\beta\cdot\hat l\big>\big|\right)\le N^+(1+N^+)\|\tilde\beta\|_{\rho''}\\
&\le\left(1+\frac1c(N^-)^2\right)C^2\nu^{-2}(N^+)^2(1+N^+)^2(N^-)^2\\
&\quad\cdot{\rm exp}\left(2^{1+\frac1s}\frac{\tilde\tau}{(\rho-\rho'')^\frac1s}{\rm ln}\frac{2\tilde\tau}{\rho-\rho''}\right)\|e\|_\rho.
\end{split}
\end{equation}

Using Lemma \ref{l4}, for $\forall 0<\rho'''<\rho''$, we have
\begin{equation*}
\begin{split}
\|\widehat\Delta'\|_{\rho'''}&\le(\rho''-\rho''')^{-1}\|\widehat\Delta\|_{\rho''}\\
&\le\left(1+\frac1c(N^-)^2\right)C^2\nu^{-2}(N^+)^2(1+N^+)^2(N^-)^2\\
&\quad\cdot(\rho''-\rho''')^{-1}{\rm exp}\left(2^{1+\frac1s}\frac{\tilde\tau}{(\rho-\rho'')^\frac1s}{\rm ln}\frac{2\tilde\tau}{\rho-\rho''}\right)\|e\|_\rho.
\end{split}
\end{equation*}

In order to make sure $\sigma+\alpha(\hat h(\sigma)+\widehat\Delta(\sigma))$ is still in the domain of $\widehat U$, from the definition of $\iota$ we can see that it is sufficient to let
$\|\widehat\Delta\|_{\rho''}<\iota$. 
Later we will show that this can hold for sufficiently small $\|e\|_\rho$, using the estimate (\ref{12}). According to Lemma \ref{l5}, we have
$$\|R\|_{\rho''}\le\|\widehat U\|_{\mathscr A_{\rho+\|\hat h\|_{\rho}+\iota}^2}\|\widehat\Delta\|_{\rho''}^2.$$

By now, we can estimate $\mathcal E[\hat h+\widehat\Delta,\lambda+\delta]$:
\begin{equation*}
\begin{split}
\|\mathcal E[\hat h&+\widehat\Delta,\lambda+\delta]\|_{\rho''}\le\|e'\frac{\widehat\Delta}{\hat l}\|_{\rho''}+\|R\|_{\rho''}\\
&\le (\rho-\rho'')^{-1}\|e\|_\rho N^-\|\widehat\Delta\|_{\rho''}+\|\widehat U\|_{\mathscr A_{\rho+\|\hat h\|_{\rho}+\iota}^2}\|\widehat\Delta\|_{\rho''}^2\\
&\le (\rho-\rho'')^{-1}\left(1+\frac1c(N^-)^2\right)C^2\nu^{-2}(N^+)^2(1+N^+)^2(N^-)^3{\rm exp}\left(2^{1+\frac1s}\frac{\tilde\tau}{(\rho-\rho'')^\frac1s}{\rm ln}\frac{2\tilde\tau}{\rho-\rho''}\right)\|e\|_\rho^2\\
&\quad+\|\widehat U\|_{\mathscr A_{\rho+\|\hat h\|_{\rho}+\iota}^2}\left(1+\frac1c(N^-)^2\right)^2C^4\nu^{-4}(N^+)^4(1+N^+)^4(N^-)^4{\rm exp}\left(2^{2+\frac1s}\frac{\tilde\tau}{(\rho-\rho'')^\frac1s}{\rm ln}\frac{2\tilde\tau}{\rho-\rho''}\right)\|e\|_\rho^2\\
&\le M'\left(1+(\rho-\rho'')^{-1}\right){\rm exp}\left(2^{2+\frac1s}\frac{\tilde\tau}{(\rho-\rho'')^\frac1s}{\rm ln}\frac{2\tilde\tau}{\rho-\rho''}\right)\|e\|_\rho^2,
\end{split}
\end{equation*}
where $M'=\left(1+\frac1c(N^-)^2\right)C^2\nu^{-2}(N^+)^2(1+N^+)^2(N^-)^3+\|\widehat U\|_{\mathscr A_{\rho+\|\hat h\|_{\rho}+\iota}^2}\left(1+\frac1c(N^-)^2\right)^2C^4\nu^{-4}(N^+)^4(1+N^+)^4(N^-)^4$. 

Therefore, when
$\|\widehat\Delta\|_{\rho''}<\iota$ holds 
, we have the estimate for one iterative step:
\begin{equation}\label{13}
\|\mathcal E[\hat h+\widehat\Delta,\lambda+\delta]\|_{\rho''}\le M'\left(1+(\rho-\rho'')^{-1}\right){\rm exp}\left(2^{2+\frac1s}\frac{\tilde\tau}{(\rho-\rho'')^\frac1s}{\rm ln}\frac{2\tilde\tau}{\rho-\rho''}\right)\|\mathcal E[\hat h,\lambda]\|_{\rho}^2.
\end{equation}

\paragraph{\textbf{Estimates for the condition numbers}}
Let $N^+(\hat h,\rho_0):=\|1+\partial_\alpha\hat h\|_{\rho_0},\ N^-(\hat h,\rho_0):=\|(1+\partial_\alpha\hat h)^{-1}\|_{\rho_0},\ c(\hat h):=\left|\big<\frac1{\hat l\cdot\hat l\circ T_{-\omega\alpha}}\big>\right|$, then
\begin{equation*}
\begin{split}
N^+(\hat h,\rho_0)&=\|1+\partial_\alpha\hat h\|_{\rho_0}\\
&=\|1+\partial_\alpha\hat h_0+\partial_\alpha(\hat h-\hat h_0)\|_{\rho_0}\\
&\le\|1+\partial_\alpha\hat h_0\|_{\rho_0}+\|\partial_\alpha(\hat h-\hat h_0)\|_{\rho_0}\\
&=N^+(\hat h_0,\rho_0)+\|\partial_\alpha(\hat h-\hat h_0)\|_{\rho_0},\\
N^-(\hat h,\rho_0)&=\|(1+\partial_\alpha\hat h)^{-1}\|_{\rho_0}\\
&=\left\|\frac{1+\partial_\alpha\hat h_0}{(1+\partial_\alpha\hat h)(1+\partial_\alpha\hat h_0)}\right\|_{\rho_0}\\
&=\left\|\frac{1+\partial_\alpha\hat h+\partial_\alpha(\hat h_0-\hat h)}{(1+\partial_\alpha\hat h)(1+\partial_\alpha\hat h_0)}\right\|_{\rho_0}\\
&\le N^-(\hat h_0,\rho_0)+\|\partial_\alpha(\hat h-\hat h_0)\|_{\rho_0}N^-(\hat h_0,\rho_0)N^-(\hat h,\rho_0),
\end{split}
\end{equation*}

\begin{equation*}
\begin{split}
|c(\hat h)-c(\hat h_0)|&\le\left|\big<\frac1{\hat l\cdot\hat l\circ T_{-\omega\alpha}}\big>-\big<\frac1{\hat l_0\cdot\hat l_0\circ T_{-\omega\alpha}}\big>\right|\\
&=\left|\big<\frac{\hat l_0\cdot(\hat l_0-\hat l)\circ T_{-\omega\alpha}+(\hat l_0-\hat l)\cdot\hat l\circ T_{-\omega\alpha}}{\hat l_0\cdot\hat l_0\circ T_{-\omega\alpha}\cdot\hat l\cdot\hat l\circ T_{-\omega\alpha}}\big>\right|\\
&\le\left(N^-(\hat h,\rho_0)N^-(\hat h_0,\rho_0)\right)^2(\|\hat l\|_{\rho_0}+\|\hat l_0\|_{\rho_0})\|\hat l-\hat l_0\|_{\rho_0}.
\end{split}
\end{equation*}

Therefore, there exists a constant $\gamma>0$, such that when $\|\hat h-\hat h_0\|_{\mathscr A_\rho^1}<\gamma$, for $\forall ~0<\rho<\rho_0$, there's always $N^{\pm}(\hat h,\rho)\le N^{\pm}(\hat h,\rho_0)\le2N^{\pm}(\hat h_0,\rho_0)$, $c(\hat h)\ge\frac{1}{2}c(\hat h_0)$. 

\subsubsection{Estimates for the iterative steps}
Choose $\rho_0\le8$. Let
\begin{equation*}
\rho_n=\rho_{n-1}-\frac{\rho_0}42^{-n}=\rho_0\left(1-\frac14\sum_{j=1}^n2^{-j}\right),
\end{equation*}
then $\rho_n\rightarrow\frac34\rho_0\ (n\rightarrow\infty)$. 

We denote $[\hat h_0,\lambda_0]:=[\hat h,\lambda]$. In the first iterative step, we obtain $[\widehat\Delta_0,\delta_0]$. We denote $[\hat h_1,\lambda_1]:=[\hat h_0+\widehat\Delta_0,\lambda_0+\delta_0]$. Similarly, we obtain a sequence of approximate solutions $\{[\hat h_n,\lambda_n]\}_{n\in\mathbb N}$, and we denote $\epsilon_n=\|\mathcal E[\hat h_n,\lambda_n]\|_{\rho_n}$. 

We notice that, in the first iterative step, we can choose a sufficiently small $\bar\delta_0>0$, such that when $\epsilon_0\in(0,\bar\delta_0)$, $\widehat U$ is well-defined and $N^{\pm}(\hat h_1,\rho_1)\le N^{\pm}(\hat h_1,\rho_0)\le2N^{\pm}(\hat h_0,\rho_0)$, $c(\hat h_1)\le2c(\hat h_0)$. Since $M'$ is a function of condition numbers, its value changes during the iteration. Provided that there exists a constant $M$ such that $M'<M$ holds uniformly, we can prove the convergence of the iteration inductively. 

Following from the notations we give earlier, we have the following estimates:
\begin{equation*}
\begin{split}
\epsilon_n&\le M\left(1+\frac{2^{n+2}}{\rho_0}\right){\rm exp}\left(2^{2+\frac1s}\frac{2^{\frac{n+2}{s}}\tilde\tau}{{\rho_0}^\frac1s}{\rm ln}\frac{2^{n+3}\tilde\tau}{\rho_0}\right)\epsilon^2_{n-1}\le\frac M{\rho_0}2^{n+3}{\rm exp}\left(2^{2+\frac1s}\frac{2^{\frac{n+2}{s}}\tilde\tau}{{\rho_0}^\frac1s}{\rm ln}\frac{2^{n+3}\tilde\tau}{\rho_0}\right)\epsilon^2_{n-1}\\
&\le\left(\frac{M}{\rho_0}\right)^{1+2+\cdots+2^{n-1}}2^{(n+3)+2(n+2)+\cdots+2^{n-1}\cdot4}\\
&\quad\cdot{\rm exp}\left(\frac{2^{2+\frac1s}\tilde\tau}{{\rho_0}^\frac1s}\left(2^{\frac{n+2}{s}}{\rm ln}\frac{2^{n+3}\tilde\tau}{\rho_0}+2^{1+\frac{n+1}{s}}{\rm ln}\frac{2^{n+2}\tilde\tau}{\rho_0}+\cdots+2^{n-1+\frac3s}{\rm ln}\frac{2^4\tilde\tau}{\rho_0}\right)\right)\epsilon^{2^n}_{0}\\
&\le\left(\frac{M}{\rho_0}\right)^{2^n}2^{2^{n+3}}{\rm exp}\left(\frac{2^{2+\frac1s}\tilde\tau}{{\rho_0}^\frac1s}\left((n+3)2^{\frac{n+2}{s}}+(n+2)2^{1+\frac{n+1}{s}}+\cdots+4\cdot2^{n-1+\frac3s}\right){\rm ln}2\right)\\
&\quad\cdot{\rm exp}\left(\frac{2^{2+\frac1s}\tilde\tau}{{\rho_0}^\frac1s}\left(2^{\frac{n+2}{s}}+2^{1+\frac{n+1}{s}}+\cdots+2^{n-1+\frac3s}\right){\rm ln}\frac{\tilde\tau}{\rho_0}\right)\epsilon^{2^n}_{0}\\
&\le\left(\frac{M}{\rho_0}\right)^{2^{n}}2^{2^{n+3}}2^{2^{n+6}\frac{2^{2+\frac1s}\tilde\tau}{{\rho_0}^\frac1s}}\left(\frac{\tilde\tau}{\rho_0}\right)^{\frac{2^{2+\frac1s}\tilde\tau}{\rho_0^\frac1s}2^{n+3}}\epsilon_0^{2^n}\le\left(\frac{M}{\rho_0}2^82^\frac{2^{8+\frac1s}\tilde\tau}{{\rho_0}^\frac1s}\left(\frac{\tilde\tau}{\rho_0}\right)^{\frac{2^{5+\frac1s}\tilde\tau}{\rho_0^{\frac1s}}}\epsilon_0\right)^{2^n}\\
&=:(A\epsilon_0)^{2^n},
\end{split}
\end{equation*}
where $A=\frac{M}{\rho_0}2^82^\frac{2^{8+\frac1s}\tilde\tau}{{\rho_0}^\frac1s}\left(\frac{\tilde\tau}{\rho_0}\right)^{\frac{2^{5+\frac1s}\tilde\tau}{\rho_0^{\frac1s}}}$, $n\in\mathbb N$. Hence, if $A\epsilon_0<1$, $\epsilon_n$ decreases faster than any exponential.

Now we show that $\widehat U$ is well-defined during the iteration. Let 
$\left(1+\frac1c(N^-)^2\right)C^2\nu^{-2}(N^+)^2(1+N^+)^2(N^-)^2$
 be uniformly bounded by $L$ and $s>2$, then
\begin{equation*}
\begin{split}
\sum_{j=0}^n\|\widehat\Delta_j\|_{\rho_n}&\le\sum_{j=0}^n\|\widehat\Delta_j\|_{\rho_j+1}\le\sum_{j=0}^nL{\rm exp}\left(2^{1+\frac1s}\frac{2^{\frac{j+3}{s}}\tilde\tau}{\rho_0^\frac1s}{\rm ln}\frac{2^{j+4}\tilde\tau}{\rho_0}\right)\epsilon_j\\
&\le\sum_{j=0}^nL{\rm exp}\left(2^{1+\frac1s}\frac{2^{\frac{j+3}{s}}\tilde\tau}{\rho_0^\frac1s}\left(\frac{2^{j+4}\tilde\tau}{\rho_0}\right)^{\frac12}\right)(A\epsilon_0)^{2^j}\\
&\le\sum_{j=0}^nL{\rm exp}\left(2\tilde\tau^{\frac32}\left(\frac{2^{j+4}}{\rho_0}\right)^{\frac1s+\frac12}\right)(A\epsilon_0)^{2^j}\\
&\le\sum_{j=0}^nL{\rm exp}\left(2\tilde\tau^\frac32\frac{16}{\rho_0}2^j\right)(A\epsilon_0)^{2^j}=\sum_{j=0}^nL(e^{2\tilde\tau^\frac32\frac{16}{\rho_0}}A\epsilon_0)^{2^j}\\
&=:\sum_{j=0}^nL(A'\epsilon_0)^{2^j},
\end{split}
\end{equation*}
where $A'=e^{2\tilde\tau^\frac32\frac{16}{\rho_0}}A$. Hence if $\epsilon_0<\min\{\frac{\iota}{8LA'},\frac{1}{2A'}\}$, we have $\sum_{j=0}^n \|\widehat\Delta_j\|_{\rho_n}<\frac{\iota}{4},\,\forall n\in\mathbb N$, that is, $\widehat U$ is well-defined.

Finally we show that the condition numbers are uniformly bounded to finish the inductive proof. We only take $N^+$ for an example, the cases of $N^-$ and $c$ are similar. 
\begin{equation*}
\begin{split}
N^+(\hat h_n,\frac{\rho_0}{2})&\le N^+(\hat h_{n-1},\frac{\rho_0}{2})+\|\partial_\alpha(\hat h_n-\hat h_{n-1})\|_{\rho_{n+1}}\\
&\le N^+(\hat h_{n-1},\rho_{n-1})+(\rho_{n}-\rho_{n+1})^{-1}\|\widehat\Delta_{n-1}\|_{\rho_{n}}\\
&= N^+(\hat h_{n-1},\rho_{n-1})+\frac{4}{\rho_0}2^{n+1}\|\widehat\Delta_{n-1}\|_{\rho_{n}}\\
&\le N^+(\hat h_0,\rho_0)+\frac{8}{\rho_0}\sum_{j=1}^{n}2^{j}L(A'\epsilon_0)^{2^{j-1}}\\
&\le N^+(\hat h_0,\rho_0)+\frac{8}{\rho_0}\sum_{j=1}^{n}L(2A'\epsilon_0)^{2^{j-1}}.
\end{split}
\end{equation*}
Hence if $\epsilon_0\le\min\{ \frac{\rho_0N^+(h_0,\rho_0)}{32LA'}, \frac{1}{4A'} \}$, $N^+(\hat h_n,\rho_n)\le2 N^+(\hat h_0,\rho_0)$ holds for $\forall n\in\mathbb N$. Therefore, condition numbers are uniformly bounded and we have finished the inductive proof. 

Now we estimate $[\hat h^*,\lambda^*]$. For $\hat h^*$, we have
\begin{equation*}
\begin{split}
\|\hat h_N-\hat h_0\|_{\frac{\rho_0}{2}}&\le\sum_{n=1}^N\|\hat h_n-\hat h_{n-1}\|_{\frac{\rho_0}{2}}\le\sum_{n=0}^{N-1}\|\widehat\Delta_n\|_{\rho_{n+1}}\\
&\le\sum_{n=0}^{N-1}L(A'\epsilon_0)^{2^n}\le\sum_{n=1}^\infty L(A'\epsilon_0)^n\\
&=LA'\epsilon_0\frac{1}{1-A'\epsilon_0}.
\end{split}
\end{equation*}
Hence if $\epsilon_0<\frac{1}{2A'}$, we have
\begin{equation*}
\|\hat h_N-\hat h_0\|_{\frac{\rho_0}{2}}\le2LA'\epsilon_0,
\end{equation*}
therefore $\|\hat h^*-\hat h_0\|_{\frac{\rho_0}{2}}\le C_1\epsilon_0$, where $C_1=2LA'$.

Similarly, for $\lambda^*$, we have
\begin{equation*}
\begin{split}
|\lambda_N-\lambda_0|&\le\sum_{n=1}^N|\lambda_n-\lambda_{n-1}|=\sum_{n=0}^{N-1}|\delta_n|=\sum_{n=0}^{N-1}\left|\big<e_n\cdot\hat l_n\big>\right|\\
&\le\sum_{n=0}^{N-1}2N^+\|e_n\|_{\rho_n}=\sum_{n=0}^{N-1}2N^+\epsilon_n\\
&\le 2N^+\sum_{n=0}^{N-1}(A\epsilon_0)^{2^n}\le 2N^+\sum_{n=1}^\infty(A\epsilon_0)^n\\
&=2N^+A\epsilon_0\frac{1}{1-A\epsilon_0}.
\end{split}
\end{equation*}
Hence if $\epsilon_0<\frac{1}{2A}$, we have
\begin{equation*}
|\lambda_N-\lambda_0|\le4N^+A\epsilon_0,
\end{equation*}
therefore $|\lambda^*-\lambda_0|\le C_2\epsilon_0$, where $C_2=4N^+A$.

\subsubsection{Uniqueness of the solution}
Suppose that $\|\hat h^*-\hat h\|_{\frac{3}{8}\rho_0},\,\|\hat h^{**}-\hat h\|_{\frac{3}{8}\rho_0}\le r<\frac{\iota}{4}$, $\mathcal{E}[\hat h^*,\lambda^*]=\mathcal{E}[\hat h^{**},\lambda^{**}]=0$, and $\langle\hat h^*\rangle=\langle\hat h^{**}\rangle=0$. Then we have $\|\hat h^{**}-\hat h^*\|_{\frac{\rho_0}{4}}\le\|\hat h^{**}-\hat h^*\|_{\frac{3}{8}\rho_0}\le 2r$, and
\begin{equation}\label{u1}
    \begin{aligned}
    0=\mathcal{E}[\hat h^{**},\lambda^{**}]-\mathcal{E}[\hat h^*,\lambda^*]=&D_1\mathcal{E}[\hat h^*,\lambda^*](\hat h^{**}-\hat h^*) +(\lambda^{**}-\lambda^*)+\widehat U(\sigma+\alpha\hat h^{**}(\sigma))\\
    &-\widehat U(\sigma+\alpha\hat h^{*}(\sigma))-\partial_{\alpha}\widehat U(\sigma+\alpha\hat h^{*}(\sigma))(\hat h^{**}-\hat h^*)\\
    =&D_1\mathcal{E}[\hat h^*,\lambda^*](\hat h^{**}-\hat h^*)+(\lambda^{**}-\lambda^*)+R,
\end{aligned}
\end{equation}
where $R=\widehat U(\sigma+\alpha\hat h^{**}(\sigma))-\widehat U(\sigma+\alpha\hat h^{*}(\sigma))-\partial_{\alpha}\widehat U(\sigma+\alpha\hat h^{*}(\sigma))(\hat h^{**}-\hat h^*)$. By Lemma \ref{l0} and Lemma \ref{l5}, we have 
\begin{align*}
    \|R\|_{\frac{\rho_0}{4}}&\le\|\widehat U\|_{\mathscr A_{\rho_0+\|\hat h\|_{\rho_0}+\iota}^2} \| \hat h^{**}-\hat h^* \|_{\frac{\rho_0}{4}}^2\le\|\widehat U\|_{\mathscr A_{\rho_0+\|\hat h\|_{\rho_0}+\iota}^2}\| \hat h^{**}-\hat h^* \|_{\frac{\rho_0}{8}}\| \hat h^{**}-\hat h^* \|_{\frac{3}{8}\rho_0}\\
    &\le\|\widehat U\|_{\mathscr A_{\rho_0+\|\hat h\|_{\rho_0}+\iota}^2}\| \hat h^{**}-\hat h^* \|_{\frac{\rho_0}{8}}\cdot2r.
\end{align*}

Denote $\hat l^*=1+\partial_{\alpha}\hat h^*$. Since $$D_1\mathcal{E}[\hat h^*,\lambda^*]\cdot \hat l^*=\frac{d}{d\theta}\mathcal{E}[\hat h^*,\lambda^*]=0,$$
we can write the equation (\ref{u1}) as:
$$\hat l^* (D_1\mathcal{E}[\hat h^*,\lambda^*]\cdot(\hat h^{**}-\hat h^*))-(\hat h^{**}-\hat h^*)(D_1\mathcal{E}[\hat h^*,\lambda^*]\cdot\hat l^*)=-\hat l^*((\lambda^{**}-\lambda^*)+R).$$
Furthermore, we have
\begin{equation}\label{u2}
(\hat h^{**}-\hat h^*)(\sigma+\omega\alpha)+(\hat h^{**}-\hat h^*)(\sigma-\omega\alpha)-\frac{\hat l(\sigma+\omega\alpha)+\hat l(\sigma-\omega\alpha)}{\hat l(\sigma)}(\hat h^{**}-\hat h^*)(\sigma)=-(\lambda^{**}-\lambda^*)-R.
\end{equation}

Notice that the equation (\ref{u2}) and the equation (\ref{9}) share the same form. Using the uniqueness statements for the solution of equation (\ref{9}) and the estimate (\ref{12}), we conclude that
\begin{equation*}
\begin{split}
\|\hat h^{**}-\hat h^*\|_{\frac{\rho_0}{8}}&\le\left(1+\frac1c(N^-)^2\right)C^2\nu^{-2}(N^+)^2(1+N^+)^2(N^-)^2\\
&\quad\cdot{\rm exp}\left(2^{1+\frac1s}\frac{\tilde\tau}{(\frac{\rho_0}{8})^\frac1s}{\rm ln}\frac{2\tilde\tau}{\frac{\rho_0}{8}}\right)\|R\|_\frac{\rho_0}{4}\\
&\le\left(1+\frac1c(N^-)^2\right)C^2\nu^{-2}(N^+)^2(1+N^+)^2(N^-)^2\\
&\quad\cdot{\rm exp}\left(2^{1+\frac1s}\frac{\tilde\tau}{(\frac{\rho_0}{8})^\frac1s}{\rm ln}\frac{2\tilde\tau}{\frac{\rho_0}{8}}\right)\|\widehat U\|_{\mathscr A_{\rho_0+\|\hat h\|_{\rho_0}+\iota}^2}\| \hat h^{**}-\hat h^* \|_{\frac{\rho_0}{8}}\cdot 2r.
\end{split}
\end{equation*}
Therefore, when $r$ is small enough, we obtain $\hat h^{**}=\hat h^*$. This completes the proof of uniqueness of the solution in Theorem \ref{sr}.

\subsection{Vanishing lemma}
Back to the initial question, to solve equation (\ref{2}), we will prove the vanishing lemma, which shows that the addition of external force has no influence on the solving of the equation.
\begin{lem}[Vanishing lemma]
Let $[\hat h,\lambda]$ be the solution of equation (\ref{4}), $\widehat U=\partial_\alpha\widehat V$ is periodic, then $\lambda=0$.
\end{lem}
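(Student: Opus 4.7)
The plan is to multiply the identity $\mathcal{E}[\hat h,\lambda]=0$ by the Jacobian $\hat l=1+\partial_\alpha\hat h$ and take the average $\langle\cdot\rangle$ over $\mathbb{T}^{\mathbb{N}}$. This will leave three contributions, each of which vanishes or is explicit, and $\lambda=0$ will fall out directly.

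First, I would record two formal rules on $\mathbb{T}^{\mathbb{N}}$ that are immediate from the Fourier description of the mean, $\langle fg\rangle=\sum_{k\in\mathbb{Z}_*^{\mathbb{N}}}\hat f_k\hat g_{-k}$: (a) translation invariance, $\langle f(\sigma+c)g(\sigma)\rangle=\langle f(\sigma)g(\sigma-c)\rangle$, since shifting by $c$ multiplies the $k$-th Fourier coefficient by $e^{ik\cdot c}$; and (b) integration by parts, $\langle A\,\partial_\alpha B\rangle+\langle(\partial_\alpha A)B\rangle=0$, because $\partial_\alpha$ multiplies the $k$-th Fourier coefficient by $i(k\cdot\alpha)$, which vanishes at $k=0$. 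In particular $\langle\hat l\rangle=1+\langle\partial_\alpha\hat h\rangle=1$, so the $\lambda$-term contributes exactly $\lambda$ after multiplying by $\hat l$ and averaging.

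Second, using the assumption $\widehat U=\partial_\alpha\widehat V$, the chain rule gives the crucial identity
\[
\widehat U(\sigma+\alpha\hat h(\sigma))\,\hat l(\sigma)=\partial_\alpha\widehat V(\sigma+\alpha\hat h(\sigma))\bigl(1+\partial_\alpha\hat h(\sigma)\bigr)=\partial_\alpha\bigl[\widehat V(\sigma+\alpha\hat h(\sigma))\bigr],
\]
a total $\partial_\alpha$-derivative, whose average is zero by rule (b).

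Third, the remaining and most delicate step is to show
\[
\bigl\langle[\hat h(\sigma+\omega\alpha)+\hat h(\sigma-\omega\alpha)-2\hat h(\sigma)]\,\hat l(\sigma)\bigr\rangle=0.
\]
I would split $\hat l=1+\partial_\alpha\hat h$. The ``$1$'' contribution is $\langle\hat h(\sigma+\omega\alpha)\rangle+\langle\hat h(\sigma-\omega\alpha)\rangle-2\langle\hat h\rangle=0$ by rule (a). For the $\partial_\alpha\hat h$ contribution, the diagonal term $-2\langle\hat h\,\partial_\alpha\hat h\rangle=-\langle\partial_\alpha(\hat h^2)\rangle=0$, while the two shifted terms are seen to cancel: by (b) and then (a),
\[
\langle\hat h(\sigma+\omega\alpha)\,\partial_\alpha\hat h(\sigma)\rangle=-\langle\partial_\alpha\hat h(\sigma+\omega\alpha)\,\hat h(\sigma)\rangle=-\langle\hat h(\sigma-\omega\alpha)\,\partial_\alpha\hat h(\sigma)\rangle.
\]
Combining the three pieces with the prior step yields $0+0+\lambda=0$, hence $\lambda=0$.

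The only real obstacle is making the formal manipulations rigorous in the infinite-dimensional torus setting, but this is precisely why everything is done at the level of the absolutely convergent Fourier series provided by $\hat h,\widehat V\in\mathscr A_\rho^1$: both (a) and (b) are then literal equalities between absolutely convergent sums, and no limit/ergodic theorem is needed beyond the Fourier characterization of $\langle\cdot\rangle$.
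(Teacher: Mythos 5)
Your proof is correct and follows the same strategy as the paper: multiply $\mathcal{E}[\hat h,\lambda]=0$ by $\hat l$, average, and observe that the potential term is a total $\partial_\alpha$-derivative while the second-difference term averages to zero, leaving $\lambda=0$. The only differences are cosmetic (you average directly over $\mathbb{T}^{\mathbb{N}}$ where the paper uses the equivalent time average $\lim_{T\to\infty}\frac{1}{2T}\int_{-T}^T\cdot\,d\theta$, and you obtain the second-difference cancellation via translation invariance and integration by parts where the paper writes out the Fourier series and notes the resulting sum is antisymmetric in $k$).
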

\begin{proof}
We multiply equation (\ref{4}) by $\hat l$ and compute 
$$\mathop{\rm lim}\limits_{T\rightarrow\infty}\frac{1}{2T}\int_{-T}^T\cdot d\theta$$
of all the terms.

We note that
\begin{equation*}
\lambda=-\mathop{\rm lim}\limits_{T\rightarrow\infty}\frac1{2T}\int_{-T}^T\widehat U(\alpha h(\theta))\cdot\hat l(\sigma)d\theta=-\mathop{\rm lim}\limits_{T\rightarrow\infty}\frac1{2T}(\widehat V(\alpha h(T))-\widehat V(\alpha h(-T)))=0.
\end{equation*}
In fact, we observe that 
\[
h(\theta+\omega)+h(\theta-\omega)-2h(\theta)=\tilde h(\theta+\omega)+\tilde h(\theta-\omega)-2\tilde h(\theta)\in AP(\alpha).
\]
Therefore,
\[
[h(\theta+\omega)+h(\theta-\omega)-2h(\theta)]\cdot\hat l(\sigma)\in AP(\alpha),
\]
and we have
\[
\begin{aligned}
&\lim_{T\to\infty}\frac{1}{2T}\int_{-T}^T[h(\theta+\omega)+h(\theta-\omega)-2h(\theta)]\cdot\hat l(\sigma)d\theta\\
=&\sum_{k\in\mathbb{Z}^\mathbb{N}_*\setminus\{0\}}-\hat h_k\cdot2({\rm cos}(\omega k\cdot\alpha)-1)\cdot\hat h_{-k} i(k\cdot\alpha)+\hat h_02({\rm cos}(\omega0\cdot\alpha)-1)\\
=&0,
\end{aligned}
\]
where the first equality is obtained from Fourier expansion, the second equality is obtained from the antisymmetric in $k$. Therefore, $\lambda=0$.
\end{proof}

\section{The step by step  increase of complexity method} \label{sec: finite approximation}

In this section, we present a different approach to 
the study of results of  almost periodic models. 

\subsection{Overview} 
The basic observation is that the main theorem of 
\cite{SDlL12} (reproduced here as Theorem~\ref{oldtheorem})
 is an a-posteriori theorem for  quasi-periodic 
models with a finite number of frequencies.  That is, the existence of 
an approximate solution implies the existence of a true solution. 

Given a problem involving an infinite number of frequencies,  
we can proceed inductively  considering a sequence of 
problems with more frequencies. We think of 
a model  with infinite frequencies as an infinite sum  of models,  
each one of them has a finite number of frequencies. 
We will assume that the sizes (measured in an appropriate sense) 
of the terms of the sum decreases fast enough. 

  Given an exact solution $u^N$  for
the model involving $N$ frequencies, we can consider it as 
an approximate solution for the problem with $N+1$ frequencies. 
If  the difference between the  models with
 $N$ frequencies and $N+1$ frequencies  is small (in some sense that will 
need to be made precise), then we can  use Theorem~\ref{oldtheorem} 
to construct an exact solution for the problem with $N+1$ frequencies. 
We can also control the size of the corrections. 

This is a fairly general procedure that allows to pass from finite 
dimensional a-posteriori theorems for quasi-periodic solutions to 
systems with infinitely many frequencies. The main ingredient is an
a-posteriori theorem. We do not need to go in the proof of
the theorem but rather use it as a building block.
Hence, we can see that the present formulation of results for
infinitely many frequencies is a corollary of 
a-posteriori results for a finite number of frequencies.

Of course, working out the 
strategy requires making precise definitions of sizes for models 
or arbitrary number of variables. A similar strategy was used 
in \cite{FontichLS} to construct almost periodic solutions in 
models of lattice oscillators (the solutions) were localized 
around a sequence of centers.

The most important difference with the method in the previous sections
is that in the construction, we do not need to deal with actually infinite
models and we do not need to solve equations with infinitely many variables.
This leads to Diophantine conditions which are different
from those in Section~\ref{sec: diophantine}.

\subsection{The model considered}

In this section, we will present the details for an 
almost periodic Frenkel-Kontorova model,  
whose equilibrium equations are 
\begin{equation}\label{apFK} 
u_{n+1} - 2 u_n + u_{n-1} + W(u_n) = 0.
\end{equation}

We will assume that $W(t) = \frac{d}{dt} V(t)$
\begin{equation}\label{apW} 
V(t) = \sum_{j \in \mathbb N_+} V_j (\alpha_1 t, \cdots,  \alpha_j t) 
\end{equation}
where the $ \{\alpha_j\}_{j = 1}^\infty\in[0,1]^{\mathbb N}$ are given numbers and 
$V_j: \torus^j_{\rho}  \rightarrow \complex$ are functions 
analytic in  the interior of  $\torus^j_{\rho} $ and 
continuous up to the boundary. Later we will assume decay condition
on $\| V_j \|_{\rho}$, where the norm is given by the supremum.

A physical interpretation of \eqref{apFK} is a sequence of
particles placed at positions $u_n$. The particles experience
harmonic forces among themselves (given by the second difference terms)
and by interaction with the substratum (given by the terms with the $W$).

This is not the only physical interpretation of the model \eqref{apW}.
In \cite{FK39} the original physical interpretation is plane dislocations.
\subsubsection{Some related models} 

An interesting variant model is the Heisenberg XY model of magnetism
\cite{Mattis}. We consider that the particles places at site $i$ have
a spin $\theta_i\in \mathcal{S}^1$ (think of the spin as a unit vector
of angle $\theta_i$). The interaction (physically it is an \emph{exchange}
interaction)
among nearby particles has an energy given by the scalar product of the unit vectors.
Furthermore, each particle is subject to an interaction with a magnetic
field at site $i$ whose strength is $\alpha_i$ and orientation $\beta_i$. 
The equilibrium equations of
Heisenberg XY model are given by
\begin{equation}\label{XY}
  \sin(\theta_{i+1} - \theta_i) - \sin(\theta_i - \theta_{i-1})
  + \alpha_i \cos( \theta_i - \beta_i  ) = 0
\end{equation}

The Frenkel-Kontorova model is a low frequency approximation
($\theta_i = \theta_{i+1}$ small so that
$  \sin(\theta_{i+1} - \theta_i) - \sin(\theta_i - \theta_{i-1})
\approx  (\theta_{i+1} - \theta_i) - (\theta_i - \theta_{i-1})$. 

We note that the physical interpretation in the XY model
makes it very natural to consider variants in which the interaction
has terms that are not just nearest neighbors and which are many body
(these are properties of the exchange interaction).  From the mathematical
point of view, we note that the Frenkel-Kontorova model has variables in $\real$ while
the XY model has variables in the circle.

\begin{rmk} 
We call attention that the paper \cite{LionsS} develops a
homogeneization theory for models of the form \eqref{apFK}. That is,
there is an average energy proportional to the length. On the other hand,
some examples show that, even with just two frequencies, the
corrector (the deviation of the minimizer with respect to
the line) may be unbounded. This is in contrast with the results in
Aubry-Mather theory \cite{AubryLD83} which show that in the one frequency
case, the correctors are bounded and with the results of this paper
that show that, for small potentials, the correctors produced by KAM
theory are bounded.

From the mathematical point of view, it is
interesting to obtain information of the rates of growth of the corrector.

If we consider potentials given by a set of parameters,
for some values of the parameters, the correctors are bounded, but for others
they are not. It is interesting  to understand  what happens at the boundary.
Some preliminary numerical explorations are in \cite{BlassL10}, which
suggest several conjectures (universal scalings at the transition). 
\end{rmk}

\subsection{Diophantine conditions} 

The models for which we can develop a useful theory are the models
for which the frequencies $\{ \alpha_n\}_{n \in \nat}$ satisfy the
following definition.
\begin{defi}
  We say that
  $\{ \alpha_n\}_{n \in \nat} \in \D_h( \{\nu_n\}_{n \in \nat},  \{\tau_n\}_{n \in \nat} ) $
  when for all 
 $N \in \nat$,
$\alpha^{[ \le N]} \equiv (\alpha_1, \ldots , \alpha_N)$  is homogeneous
Diophantine (with constants $\nu_N, \tau_N$) in the standard sense
\cite{Schmidt}.
\begin{equation}\label{Dioph_h}
	\alpha^{[ \le N]} \in \D_{h,N}(\nu_N, \tau_N) \iff  |\alpha^{[\le N]}\cdot k | \ge
  \nu_N |k|_1 ^{-\tau_N}  \quad  \forall k \in \integer^N \setminus \{0\}
\end{equation}
where $|k|_1 = |k_1| + \cdots +|k_N|$.

We denote
\begin{equation}
  \begin{aligned} 
  & \D_{h,N}(\tau_N) = \cup_{ \nu_N } \D_h(\nu_N, \tau_N) \subset \real^N, \\
 &\D_h( \{\tau_n\}_{n\in \nat}) = \cup_{ \{\nu_n\}_{n \in \nat}}\D_h(\{\nu_n\}_{n \in \nat}, \{\tau_n\}_{n \in \nat}).
  \end{aligned}
  \end{equation}
\end{defi}

It is a standard result in Diophantine approximation that $\D_{h,N}(\tau_N)$
has full Lebesgue measure in $\real^N$ provided that $\tau_N > N-1$.
If $\tau_N < N-1$, $\D_{h,N}(\tau_N) = \emptyset$.

From this we can deduce the following density result
in infinite dimensions:
\begin{lem}\label{abundance_h}
    If $\tau_n > n-1$ for all $n\in\mathbb N$, then $$m([0,1]^{\mathbb N}\setminus ( \D_h( \{\tau_n\}_{n\in \nat})  ))=0,$$ where $m(\cdot)$ is the product probability measure.
\end{lem}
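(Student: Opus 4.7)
The plan is to reduce the statement to the classical finite-dimensional density of Diophantine vectors by unraveling the definition and using the structure of the product measure.

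First I would rewrite the "good" set $\mathcal{D}_h(\{\tau_n\}_{n\in\nat})$ by noting that, since the sequence $\{\nu_n\}$ in the union can be chosen freely, we have $\alpha \in \mathcal{D}_h(\{\tau_n\}_{n\in\nat})$ if and only if for every $N \in \nat$ there exists some $\nu_N > 0$ with $\alpha^{[\le N]} \in \mathcal{D}_{h,N}(\nu_N, \tau_N)$, equivalently $\alpha^{[\le N]} \in \mathcal{D}_{h,N}(\tau_N)$ for every $N$. Denoting by $\pi_N : [0,1]^{\nat} \to [0,1]^N$ the projection onto the first $N$ coordinates, this yields the clean decomposition
\begin{equation*}
[0,1]^{\nat} \setminus \mathcal{D}_h(\{\tau_n\}_{n\in\nat}) \;=\; \bigcup_{N \in \nat} \pi_N^{-1}\bigl( [0,1]^N \setminus \mathcal{D}_{h,N}(\tau_N) \bigr).
\end{equation*}

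Next I would invoke the defining property of the product probability measure: for every Borel set $E \subset [0,1]^N$, $m(\pi_N^{-1}(E)) = m_N(E)$, where $m_N$ is $N$-dimensional Lebesgue measure. So it suffices to show $m_N([0,1]^N \setminus \mathcal{D}_{h,N}(\tau_N)) = 0$ for each $N$ with $\tau_N > N-1$. This is the classical fact about homogeneous Diophantine vectors (\cite{Schmidt}): for any fixed $\nu_N > 0$, the complement of $\mathcal{D}_{h,N}(\nu_N,\tau_N)$ inside $[0,1]^N$ is covered by the union over $k \in \integer^N \setminus \{0\}$ of the strips $\{\alpha : |\alpha \cdot k| < \nu_N |k|_1^{-\tau_N}\}$, each of $m_N$-measure $O(\nu_N |k|_1^{-\tau_N -1})$ by slicing orthogonally to $k$; summing over $k$ gives a bound $C(\tau_N) \nu_N$ which is finite precisely when $\tau_N > N - 1$. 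Letting $\nu_N \to 0$ shows that the set of non-Diophantine vectors in $[0,1]^N$ is $m_N$-null.

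Finally, a countable union of null sets is null, so $m\bigl([0,1]^{\nat} \setminus \mathcal{D}_h(\{\tau_n\}_{n\in\nat})\bigr) = 0$, which is the desired conclusion. The only subtle point, which I would flag explicitly, is the equivalence between the single existence of a sequence $\{\nu_n\}$ (as in the definition of $\mathcal{D}_h(\{\tau_n\}_{n\in\nat})$) and the $N$-by-$N$ existence of $\nu_N$; there is no obstacle to this because we can always splice together the $\nu_N$ obtained for each $N$ into a single sequence. Everything else is standard bookkeeping and the finite-dimensional Diophantine density result, so I do not expect any serious obstacle beyond writing the quantifier reduction carefully.
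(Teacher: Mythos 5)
Your proof is correct and follows exactly the route the paper intends (the paper itself states the lemma without proof, remarking only that it "can be deduced" from the finite-dimensional fact that $\D_{h,N}(\tau_N)$ has full Lebesgue measure when $\tau_N > N-1$). Your quantifier reduction from the existence of a single sequence $\{\nu_n\}$ to the $N$-by-$N$ existence of $\nu_N$, the cylinder-set computation $m(\pi_N^{-1}(E)) = m_N(E)$, and the appeal to countable subadditivity are precisely the right ingredients and are carried out correctly.
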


\begin{rmk}
  Note that the abundance result Lemma~\ref{abundance_h}
  presented above refers to situations where the $\alpha_n$
  are uniformly bounded. In \cite{Perfetti}, it is remarked that
  if we choose $\alpha_n$ going to infinity sufficiently fast,
  it is very easy to adjust even stronger Diophantine conditions.

  The use of very large frequencies does not seem natural in Frenkel-Kontorova
  type models, but it may be natural in PDE's.
\end{rmk}

Given an $\omega \in \real_+$ we seek solutions of 
\eqref{apFK} of the form:
\begin{equation} \label{hullfunction} 
u_n = \omega n  + \hat h (\omega n \alpha)
\end{equation}
where $\hat h$ is an almost periodic function. That is: 
$\hat h(\theta) = \sum_n  \hat h_n ( \theta \alpha^{[ \le n]} ) $ with
the $\hat h_n : \torus_{\rho}^n\rightarrow \complex$ and
taking real values for real values of the arguments.

We note that if there is a solution of the form \eqref{hullfunction},
we have many of them $ u_n = \omega n  + \hat h (\omega n \alpha + \varphi)$
is also an equilibrium solution for any $\phi$. The fact that there is a continuous of equilibria depending smoothly on parameters has the physical interpretation that the solutions can \emph{slide}. Any small external constant force
applied to all the particles can move them from one equilibrium to another
and get the solutions to move. In the interpretation of particles deposited
on a material, this has the interpretation that the deposited particles do
not stick and that any arbitrarily small tilting force will male them move.

We also note when  there is a continua of equilibria and the
energy functional has some weak convexity conditions (satisfied
by \eqref{apFK}), an argument in \cite{SuL17} -- adapting
to our context ideas of \cite{Caratheodory}-- shows that the
solutions are minimizers in the sense that compactly supported
modifications increase the energy  -- considering only the terms
which change --(class A minimizers).

Note that the sequence of frequencies  $\{ \alpha_n \}_{n \in \nat}$
is a property of the model. They are part of the description of
the substratum. The unknowns
are $\omega$ which has the interpretation of the inverse of
an average density (notice  that since $\hat h$ is bounded,
we have that, for large $n$, we have $u_n \approx \omega n$.

A key assumption of the result Theorem~\ref{new} is is
the $\omega  \alpha$ satisfies a non-homogeneous  Diophantine condition.
If we consider $\alpha$ fixed (the $\alpha$ is a property of the material)
it becomes a condition on the $\omega$, the densities admitting sliding.

\begin{defi}
  We say that
  $\{ \omega  \alpha_n\}_{n \in \nat} \in \D( \{\nu_n\}_{n \in \nat},  \{\tau_n\}_{n \in \nat} ) $
  when for all 
 $N \in \nat$,
$\alpha^{[ \le N]} \equiv (\alpha_1, \ldots , \alpha_N)$  is non-homogenous
Diophantine (with constants $\nu_N, \tau_N$) in the standard sense
\cite{Schmidt}.
\begin{equation}\label{Dioph}
\omega	\alpha^{[ \le N]} \in \D_N(\nu_N, \tau_N) \iff  |\omega \alpha^{[\le N]}\cdot k - 2\pi n | \geq
  \nu_N |k|_1 ^{-\tau_N}  \quad  \forall k \in \integer^N \setminus \{0\}\,\,\forall n\in\mathbb Z
\end{equation}
where $|k|_1 = |k_1| + \cdots + |k_N|$.

We denote $\D(\{\tau_n\}_{n\in\mathbb N})=\cup_{\{\nu_n\}_{n\in\mathbb N}}\D(\{\nu_n\}_{n\in\mathbb N},\{\tau_n\}_{n\in\mathbb N})$.
\end{defi}

\begin{lem}
Assume that $\alpha \in [0,1]^{\mathbb N}\cap \D_h( \{\tau_n\})$ and 

	Let $\tilde \tau_n > \tau_n + n$. 
	Then, the set of $\omega\in \real$ for which 
	$\omega \alpha \in \D(\{ \tilde \tau_n\}) $ has 
	full measure. 

\end{lem}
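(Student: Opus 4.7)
The plan is to adapt the measure estimate already carried out for the infinite-dimensional Diophantine set in Section~\ref{sec: diophantine}, but now level-by-level in $N$, exploiting that at each stage we only deal with finitely many frequencies. The strategy is (i) fix $N$, (ii) estimate the measure of the $\omega$'s that violate the non-homogeneous Diophantine condition for $\alpha^{[\le N]}$ at a fixed tolerance $\nu$, (iii) let $\nu \to 0$ so that the level-$N$ bad set has measure zero on every bounded interval, (iv) take a countable union over $N$ and a countable exhaustion of $\real$ to conclude.

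Fix $N$ and a bounded interval $I\subset\real$ of length $|I|$. For $k\in\integer^N\setminus\{0\}$, $n\in\integer$, $\nu>0$, define
\[
B_{k,n,\nu}^{(N)} = \Bigl\{\omega\in I\;:\;|\omega\alpha^{[\le N]}\cdot k - 2\pi n| < \nu\,|k|_1^{-\tilde\tau_N}\Bigr\}
          = \Bigl\{\omega\in I\;:\;\bigl|\omega - \tfrac{2\pi n}{\alpha^{[\le N]}\cdot k}\bigr|<\tfrac{\nu}{|\alpha^{[\le N]}\cdot k|\,|k|_1^{\tilde\tau_N}}\Bigr\}.
\]
Each $B_{k,n,\nu}^{(N)}$ is an interval of length $\tfrac{2\nu}{|\alpha^{[\le N]}\cdot k|\,|k|_1^{\tilde\tau_N}}$, and for each fixed $k$ the number of $n\in\integer$ for which $B_{k,n,\nu}^{(N)}\neq\varnothing$ is at most $\tfrac{|I|\,|\alpha^{[\le N]}\cdot k|}{2\pi}+2$, exactly as in the proof of the measure estimates lemma.

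Adding these up and using the homogeneous Diophantine bound $|\alpha^{[\le N]}\cdot k|\ge \nu_N^h\,|k|_1^{-\tau_N}$ (which is the content of $\alpha\in\D_h(\{\tau_n\})$), we obtain
\[
\Bigl|\bigcup_{k\ne 0,\,n\in\integer}B_{k,n,\nu}^{(N)}\Bigr|
\;\le\; \frac{|I|\nu}{\pi}\sum_{k\in\integer^N\setminus\{0\}}\frac{1}{|k|_1^{\tilde\tau_N}}
\;+\; \frac{4\nu}{\nu_N^h}\sum_{k\in\integer^N\setminus\{0\}}\frac{1}{|k|_1^{\tilde\tau_N-\tau_N}}.
\]
Both series converge: the hypothesis $\tilde\tau_N>\tau_N+N$ gives $\tilde\tau_N-\tau_N>N$ for the second sum, while the non-emptyness of $\D_{h,N}(\tau_N)$ forces $\tau_N\ge N-1$, hence $\tilde\tau_N>2N-1>N$ for the first sum. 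Thus
\[
\Bigl|\bigcup_{k\ne 0,\,n}B_{k,n,\nu}^{(N)}\Bigr| \le C(N,I,\alpha)\,\nu.
\]

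Letting $\nu\to 0$ shows that the level-$N$ bad set
\[
\mathcal{B}_N(I) := \bigcap_{\nu>0}\bigcup_{k\ne 0,\,n}B_{k,n,\nu}^{(N)} \;=\; \Bigl\{\omega\in I:\;\omega\alpha^{[\le N]}\notin\D_{N}(\tilde\tau_N)\Bigr\}
\]
has Lebesgue measure zero. Covering $\real$ by countably many bounded intervals and then taking the union over $N\in\nat$ gives a null set $\mathcal{B}=\bigcup_{N}\bigcup_{I}\mathcal{B}_N(I)$; its complement is precisely the set of $\omega\in\real$ such that for every $N$ there exists some $\tilde\nu_N>0$ with $\omega\alpha^{[\le N]}\in\D_N(\tilde\nu_N,\tilde\tau_N)$, that is, $\omega\alpha\in\D(\{\tilde\tau_n\})$. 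The complement has full Lebesgue measure, as desired.

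The only point that needs care is the $n$-counting step, which is what makes the factor $|\alpha^{[\le N]}\cdot k|$ in the denominator cancel and produces a sum to which the homogeneous Diophantine hypothesis on $\alpha$ can be applied cleanly. Everything else is routine; in particular, because at each stage $N$ we are in finite dimensions, there is no need for the more delicate infinite products that appear in Section~\ref{sec: diophantine}, which is precisely the simplification offered by the step-by-step method.
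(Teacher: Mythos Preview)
Your proof is correct and follows essentially the same approach as the paper: the same bad-set decomposition $B_{k,n,\nu}^{(N)}$, the same $n$-counting that cancels the factor $|\alpha^{[\le N]}\cdot k|$, and the same appeal to the homogeneous Diophantine bound on $\alpha$ to control the residual sum. The only difference is organizational: you send $\nu\to 0$ for each fixed $N$ and then take a countable union over $N$, whereas the paper first unions over $N$ and then intersects over sequences $\{\nu_s\}$; both lead to the same null set and the same conclusion.
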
 

\begin{proof}
Denote
$$\tilde\D(\{\tilde{\tau}_n\};\{\alpha_n\})=\{\omega\in\mathbb R:\{\omega\alpha_n\}\in\D(\{\tilde\tau_n\})\},$$
$$\tilde\D(\{\nu_n\},\{\tilde{\tau}_n\};\{\alpha_n\})=\{\omega\in\mathbb R:\{\omega\alpha_n\}\in\D(\{\nu_n\},\{\tilde\tau_n\})\}.$$          
Let $A>0$. Consider the sets $$\mathscr{B}_{N,k,n}^{\{\nu_s\}}=\{\omega:|\omega\alpha^{[\le N]}\cdot k-2\pi n|<\nu_N|k|^{-\tilde{\tau}_N}\}$$
and $$\mathscr{B}_{N,k,n,A}^{\{\nu_s\}}=\mathscr{B}_{N,k,n}^{\{\nu_s\}}\cap [A,1.01A].$$
Clearly, 
\begin{equation*}
    \begin{aligned}
        \relax [A,1.01A]\setminus\tilde\D(\{\tilde{\tau}_n\};\{\alpha_n\}) &= \cap_{\{\nu_n\}}([A,1.01A]\setminus\tilde{\D}(\{\nu_n\},\{\tilde{\tau}_n\};\{\alpha_n\})) \\
        & = \cap_{\{\nu_s\}}\cup_{N\in\mathbb N}\cup_{k\in\mathbb Z^N\setminus\{0\},n\in\mathbb Z}\mathscr B_{N,k,n,A}^{\{\nu_s\}}
    \end{aligned}
\end{equation*}
Note that $\mathscr B_{N,k,n}^{\{\nu_s\}}$ is an interval of length $$|\mathscr B_{N,k,n}^{\{\nu_s\}}|=\frac{2\nu_N|k|^{-\tilde{\tau}_N}}{|\alpha^{[\le N]}\cdot k|},$$
and $$\sharp\{n:\mathscr B_{k,n,A}^{\{\nu_s\}}\ne\varnothing\}\le\frac{0.02A}{2\pi}{|\alpha\cdot k|}+2.$$
Since $\alpha\in\D_h(\{\tau_n\})$, there exists $\{\nu'_n\}_{n\in\mathbb N}$ such that $\alpha\in\D_h(\{\nu_n'\},\{\tau_n\})$.
Hence
\begin{equation*}
    \begin{aligned}
        |\cup_{N,k,n}\mathscr B_{N,k,n,A}^{\{\nu_s\}}|&\le\sum_{\substack{N,k,n\\ \mathscr B^{\{\nu_s\}}_{N,k,n,A}\ne\varnothing}}\frac{2\nu_N|k|^{-\tilde{\tau}_N}}{|\alpha^{[\le N]}\cdot k|} \le \sum_{N,k} \frac{2\nu_N|k|^{-\tilde\tau_n}}{|\alpha^{[\le N]}\cdot k|}(\frac{0.02A}{2\pi}|\alpha^{[\le N]}\cdot k|+2)\\
        &\le\sum_N(\frac{0.04A}{2\pi} A\cdot\nu_N\sum_{k\in\mathbb Z^N\setminus\{0\}}|k|^{-\tilde\tau_N}+\frac{4\nu_N}{\nu'_N}\sum_{k\in\mathbb Z^N\setminus\{0\}}|k|^{-(\tilde\tau_N-\tau_N)})\\
        &\le\sum_N(\frac{0.04A}{2\pi} A\cdot\nu_N C_1(\tilde\tau_N)+\frac{4\nu_N}{\nu'_N}C_2(\tilde\tau_N,\tau_N)).
    \end{aligned}
\end{equation*}
By appropriately selecting $\{\nu_s\}_{s\in\mathbb N}$, $|\cap_{\{\nu_s\}}\cup_{N,k,n}\mathscr B^{\{\nu_s\}}_{N,k,n,A}|$ can be arbitrarily small. Therefore, $\tilde\D(\{\tilde{\tau}_n\};\{\alpha_n\})$ is of full measure.

\end{proof}


\begin{rmk}
  It is interesting to compare the Diophantine conditions 
  in this section with the infinite dimensional
  Diophantine conditions in Section~\ref{sec: diophantine}.

  In this section, we assume that for each $N$,  $\alpha^{[\le N]} $ are Diophantine in the
  standard  sense with the resonance bounded that are a power of
  the $\ell_1$-norm of $k$. This allows Diophantine conditions $\tau_N > N-1$
  and allow, in principle any decrease of the $\nu_N$.  Of course, if the
  $\nu_N$ decrease very fast, we need stronger conditions on the decrease
  of $\| W_N \|_{\rho_n}$. 

  In  \eqref{dio1}, the conditions involve other weights, which are
  powers of the components of the $k$ with weights. In terms of the
  usual conditions, this leads to $\tau_N > (1 + \tau)N$ and with $\nu_N$ that
  decrease like $N!^{1+\tau}$. Indeed, the small denominator
  bounds have more structure. By fixing these rates of growth of the conditions
  and taking advantage of the structures, the
  method here
  allows to use rather explicit
  sufficient conditions on $\rho_N$,
  $\| W_N \|_{\rho_N}$.

  The conditions in ~\eqref{dio1} allow to consider cohomology
  equations for functions in infinitely many variables. In the method of
  this section, we only use the standard
  finite dimensional cohomology equations.
  \end{rmk}

  \subsection{Statement of results}

  The main result of this section  is the following:

 \begin{thm}\label{new}
   Consider the model \eqref{apFK}. Denote $W_j(t) = \frac{d}{dt} V_j (\alpha_1 t, \cdots, \alpha_j t )$ and $\widehat W_j$ is the shell function of $W_j$, $\widehat W_j(\alpha^{[\le j]}t) = W_j(t)$. Let $\omega \in \real $ such that
    $\omega \alpha \in \D_h( \{\nu_n\}, \{ \tau_n\} )$. 
    
    Let $0<\rho_{\infty}<\rho$. Assume that the $W_n$ are analytic on $\torus^n_{\rho}$ for all $n\in\nat_+$,  and
    \begin{equation}\label{recursive} 
      \| \widehat W_n \|_{C^2(\mathbb T_{\rho}^n)} \le C_n( \{ \nu_j \}_{j\le n}, \{ \tau_j \}_{j\le n}, \{ \| \widehat W_j \|_{C^2(\mathbb T_{\rho}^j)} \}_{j\le{n-1}} ), \,\, \forall n\in\mathbb N_+,
    \end{equation} 
    where the $C_n > 0$ are given in the proof. Moreover, $\sum_{n}\| \widehat W_n \|_{C^2(\mathbb T_{\rho}^n)}<+\infty$.

	 Then, we can find an almost periodic hull 
	 function $h(t) = t + \hat h (\alpha t) $, such that $u_n = h(\omega n)$ is solution of \eqref{apFK}, and $\hat h$ is analytic on $\mathbb T_{\rho_{\infty}}^{\mathbb N}$, where $0<\rho_{\infty}<\rho$. 
 \end{thm}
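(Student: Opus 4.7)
The plan is to implement the step by step increase of complexity method: use the finite-frequency a-posteriori KAM theorem (Theorem~\ref{oldtheorem} from \cite{SDlL12}) as a black box at each finite stage and pass to the limit. Define the truncated potentials $V^{[\le N]}(t) := \sum_{j=1}^N V_j(\alpha_1 t, \ldots, \alpha_j t)$ and $W^{[\le N]} := (V^{[\le N]})'$. For each $N\ge 1$ we consider the finite-frequency model
\[
u_{n+1} - 2 u_n + u_{n-1} + W^{[\le N]}(u_n) = 0,
\]
which involves only the finite vector $\omega\alpha^{[\le N]}$ and to which Theorem~\ref{oldtheorem} applies directly. We construct inductively a sequence of exact solutions $h^{[\le N]}(\theta) = \theta + \hat h^{[\le N]}(\alpha^{[\le N]}\theta)$ on $\mathbb T^N_{\rho_N}$ and show that they converge to an almost periodic hull function $\hat h \in \mathscr A_{\rho_\infty}$.

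For the base case, take $\hat h^{[\le 0]} = 0$ as the approximate hull for the one-frequency model, so that the residual is $\widehat W_1$. Provided $C_1 = C_1(\nu_1,\tau_1)$ is chosen below the smallness threshold of Theorem~\ref{oldtheorem}, we obtain an exact solution $\hat h^{[\le 1]}$ on $\mathbb T_{\rho_1}$ together with explicit bounds on the non-degeneracy constants $N^\pm_1, c_1$. For the inductive step, given an exact solution $\hat h^{[\le N]}$ of the $N$-truncated equation, extend it trivially in the $(N+1)$-th variable and regard it as an approximate solution of the $(N+1)$-truncated equation. Its residual is exactly
\[
\widehat W_{N+1}\bigl(\sigma^{[\le N+1]} + \alpha^{[\le N+1]}\hat h^{[\le N]}(\sigma^{[\le N]})\bigr),
\]
whose $\rho_N$-norm is controlled by a constant times $\|\widehat W_{N+1}\|_{C^2(\mathbb T^{N+1}_\rho)}$ via the composition estimate in Lemma~\ref{l5}. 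Since $W = V'$ is a gradient, the vanishing lemma applies at each stage, so the external parameter $\lambda^{[\le N+1]}$ automatically vanishes. If the residual is below the smallness threshold of Theorem~\ref{oldtheorem} with constants $\nu_{N+1},\tau_{N+1}$ and the inherited condition numbers, we obtain $\hat h^{[\le N+1]}$ on $\mathbb T^{N+1}_{\rho_{N+1}}$ with quantitative bounds on $\|\hat h^{[\le N+1]} - \hat h^{[\le N]}\|_{\rho_{N+1}}$. This is precisely what dictates the recursive constants $C_{N+1}$ in \eqref{recursive}.

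To force the limit to be analytic on $\mathbb T^{\mathbb N}_{\rho_\infty}$, choose for instance $\rho_N = \rho_\infty + 2^{-N}(\rho-\rho_\infty)$ and pick each $C_n$ inductively so that at stage $n$: (i) the residual $\|\widehat W_n\|_{C^2}$ lies below the smallness threshold of Theorem~\ref{oldtheorem} with the current radius gap $\rho_{n-1}-\rho_n$, the Diophantine constants $\nu_n,\tau_n$, and the inherited non-degeneracy constants; (ii) the correction satisfies $\|\hat h^{[\le n]} - \hat h^{[\le n-1]}\|_{\rho_n} \le 2^{-n}$, so that the telescoping series $\hat h = \hat h^{[\le 0]} + \sum_n (\hat h^{[\le n]} - \hat h^{[\le n-1]})$ converges absolutely in $\mathscr A_{\rho_\infty}$; and (iii) the non-degeneracy constants deteriorate by at most a factor $1+2^{-n}$, so $N^\pm_n \le 2 N^\pm_1$ and $c_n \ge c_1/2$ uniformly. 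Conditions (ii) and (iii) are quantitative consequences of the a-posteriori estimates combined with Lemma~\ref{l4} and Lemma~\ref{l5}. Given $\hat h \in \mathscr A_{\rho_\infty}$, the summability $\sum_n \|\widehat W_n\|_{C^2} < \infty$ allows us to pass to the limit in the $N$-truncated equations and conclude that $u_n = \omega n + \hat h(\omega n \alpha)$ solves \eqref{apFK}.

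The main obstacle is the coupled bookkeeping that determines the explicit form of the constants $C_n$ in \eqref{recursive}: each $C_n$ must absorb the $\nu_n^{-1}$ small-divisor losses, the $(\rho_{n-1}-\rho_n)^{-\mathrm{const}(\tau_n)}$ analyticity losses coming from the Cauchy estimate, and the accumulated growth of the condition numbers inherited from all previous stages. Unlike the approach of Section~\ref{S4}, we never solve a genuinely infinite-dimensional cohomology equation here, so the technical work is confined to calibrating these tolerances so that the induction closes and the relevant series converge.
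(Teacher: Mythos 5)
Your proposal is correct and follows essentially the same route as the paper: extend each exact solution $\hat h^{[\le N]}$ trivially in the new variable, observe that the residual in the $(N+1)$-truncated model is $\widehat W_{N+1}$ composed with the old hull, invoke Theorem~\ref{oldtheorem} as a black box, and calibrate the recursive smallness conditions $C_n$ so that the corrections telescope, the non-degeneracy constants stay uniformly bounded, and the limit lies in $\mathscr A_{\rho_\infty}$. The only cosmetic difference is your choice of radii $\rho_N = \rho_\infty + 2^{-N}(\rho-\rho_\infty)$ versus the paper's $\rho_n = \rho_\infty + 2^{-n-1}(\rho-\rho_\infty)$, and your citation of the infinite-dimensional Lemma~\ref{l5} for a composition estimate where a standard finite-dimensional Cauchy estimate (as in condition (H3) of Theorem~\ref{oldtheorem}) would do.
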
 

 \begin{rmk}\label{analytic}
     Here, by analytic on the infinite deminsional torus $\mathbb T^{\mathbb N}_{\rho_{\infty}}$, we mean $\hat h(\sigma)=\sum_{n\in\mathbb N} \widehat H_n(\sigma^1,\cdots,\sigma^n )$, where $\widehat H_n(\sigma^1,\cdots,\sigma^n)$ are analytic in the usual sense on the $\mathbb T_{\rho_{\infty}}^n$, and $\sum_{n\in\mathbb N}\|\widehat H_n\|_{\rho_{\infty}}<\infty$.
 \end{rmk}

 \subsection{Proof of Theorem~\ref{new}}

 We will deduce Theorem~\ref{new} from Theorem 12 of
 \cite{ref2}.

 We present now a simplified version.
 \begin{thm}\label{oldtheorem}
     Let $h = Id + \tilde h$, where $\tilde h(\theta) = \hat h (\alpha \theta) = \sum_{k \in \mathbb Z^d}\hat h_k \cdot e^{ik\cdot \alpha \theta}$ with $\hat h_0 = 0$, $\hat h\in\mathscr A _{\rho_0}^1$, and $\alpha\in\mathbb R^d$ such that $\alpha\cdot j \ne 0$, $\forall j \in \mathbb Z^d \setminus \{ 0 \}$. Denote $\hat l=1+\partial_\alpha\hat h$, $T_{x}(\sigma)=\sigma+x$, We assume the following: 
\begin{itemize}
\item[{\rm (H1)}]
Diophantine condition: $|\omega\alpha\cdot k-2n\pi|\ge\nu |k|^{-\tau}$ holds for $\forall k\in\mathbb Z^d\setminus\{0\}$, $\forall n\in\mathbb Z$, where $\tau,\nu>0$.
\item[{\rm (H2)}]
Non-degeneracy condition: $\|\hat l(\sigma)\|_{\rho_0}\le N^+,\ \|(\hat l(\sigma))^{-1}\|_{\rho_0}\le N^-$, $\big|\big<\frac1{\hat l\cdot\hat l\circ T_{-\omega\alpha}}\big>\big|\ge c>0$. 
\item[{\rm (H3)}]
Composition condition: Take $\iota:={\rm dist}(\mathbb C^d\setminus\Omega,(Id+\alpha\hat h)(\mathbb T_{\rho_0}^{d}))$, where $\Omega$ is the domain of $\widehat U$. We assume that $\|\hat h\|_{\rho_0}+\rho_0\le\frac12\iota$. 
\end{itemize}

Let $\rho_{\infty}=\rho_0-\delta$, where $0<\delta<\rho_0/2$. 
There exists a constant 
\[\epsilon^*=\epsilon^*(N^-,N^+,d,\tau,c,\iota,\|U\|_{C^2(\Omega)})>0,\]
 such that, if $\|\mathcal E[\hat h,\lambda]\|_{\rho_0}\le \epsilon^* \nu^4 \delta^{4\tau}$, then there exists a periodic function $\hat h^*$ and $\lambda^*\in\mathbb R$, such that 
$$\mathcal E[\hat h^*,\lambda^*]=0.$$ Moverover, there exists a constant $C=C(N^-,N^+,d,\tau,c,\iota,\|U\|_{C^2(\Omega)})>0$, such that $$\| \hat h - \hat h^* \|_{\rho_{\infty}}\le C\nu^{-2}\delta^{-2\tau}\|\mathcal{E}[\hat h,\lambda]\|_{\rho_0},$$
$$|\lambda-\lambda^*| \le C\|\mathcal{E}[\hat h,\lambda]\|_{\rho_0}.$$
Denote $\hat l^*=1+\partial_{\alpha}\hat h^*$, we have: 
$$\|\hat l^* (\sigma)\|_{\rho_{\infty}}\le N^+ + C\nu^{-2}\delta^{-2\tau-1}\|\mathcal{E}[\hat h,\lambda]\|_{\rho_0},$$
$$\|(\hat l^* (\sigma))^{-1}\|_{\rho_{\infty}}\le N^- + C\nu^{-2}\delta^{-2\tau-1}\|\mathcal{E}[\hat h,\lambda]\|_{\rho_0},$$
$$\big|\big<\frac1{\hat l^*\cdot\hat l^*\circ T_{-\omega\alpha}}\big>\big| \ge c - C\nu^{-2}\delta^{-2\tau-1}\|\mathcal{E}[\hat h,\lambda]\|_{\rho_0}.$$

$${\rm dist}(\mathbb C^d\setminus\Omega,(Id+\alpha\hat h)(\mathbb T_{\rho_0}^d))\ge\iota-C\nu^{-2}\delta^{-2\tau}\| 
\E[\hat h,\lambda] \|_{\rho_0}.$$

 \end{thm}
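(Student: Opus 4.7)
The plan is to run essentially the same quasi-Newton iteration developed for Theorem~\ref{sr}, specialized to the finite-dimensional case where $\alpha \in \mathbb{R}^d$ so that small-divisor losses become polynomial of the form $(\rho-\rho')^{-\tau}$ rather than exponential. Starting from an approximate pair $(\hat h, \lambda)$ with error $e = \mathcal{E}[\hat h, \lambda]$, linearizing $\mathcal{E}$ in the first argument produces a Newton equation containing the obstructive variable-coefficient term $\partial_\alpha \widehat U(\sigma + \alpha \hat h)\widehat\Delta$. Differentiating the error equation $\mathcal{E}[\hat h,\lambda]=e$ in $\theta$ and using $\hat l = 1 + \partial_\alpha \hat h$ produces the identity
\begin{equation*}
\hat l\circ T_{\omega\alpha} + \hat l\circ T_{-\omega\alpha} - 2\hat l + \partial_\alpha \widehat U(\sigma + \alpha \hat h)\,\hat l = e',
\end{equation*}
which I use to eliminate the bad term from the Newton step. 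Dropping the $e'\widehat\Delta/\hat l$ piece (already quadratically small) yields the quasi-Newton equation
\begin{equation*}
\widehat\Delta\circ T_{\omega\alpha} + \widehat\Delta\circ T_{-\omega\alpha} - \frac{\hat l\circ T_{\omega\alpha} + \hat l\circ T_{-\omega\alpha}}{\hat l}\widehat\Delta = -e-\delta,
\end{equation*}
which factors as the two cohomology equations
\begin{equation*}
(\widehat\Delta/\hat l)\circ T_{-\omega\alpha} - \widehat\Delta/\hat l = \widehat W/(\hat l\cdot\hat l\circ T_{-\omega\alpha}),\qquad \widehat W\circ T_{\omega\alpha} - \widehat W = \hat l\cdot(e+\delta).
\end{equation*}
Zero-average compatibility forces $\delta = -\langle \hat l \cdot e\rangle$ (using $\langle \hat l\rangle = 1$) and fixes the constant part of $\widehat W$ through the non-degeneracy assumption $|\langle 1/(\hat l \cdot \hat l \circ T_{-\omega\alpha})\rangle|\ge c>0$ from (H2).

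Each cohomology equation is solved by Fourier expansion: (H1) together with the elementary bound $|e^{it}-1|^{-1}\le (\pi/2)/\mathrm{dist}(t,2\pi\mathbb{Z})$ gives the coefficientwise estimate $|\hat\phi_k|\le (\pi/2)\nu^{-1}|k|^\tau |\hat\eta_k|$, and summing against $e^{\rho'|k|}$ yields the standard loss $\|\hat\phi\|_{\rho'}\le C(d,\tau)\,\nu^{-1}(\rho-\rho')^{-\tau}\|\hat\eta\|_\rho$. Composing the two solves and applying Lemma~\ref{l3} produces $\|\widehat\Delta\|_{\rho''}\le M_1\,\nu^{-2}(\rho-\rho'')^{-2\tau}\|e\|_\rho$ with $M_1$ polynomial in $N^+, N^-, c^{-1}$. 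Lemma~\ref{l5} then controls the composition remainder $R = \widehat U(\sigma+\alpha(\hat h+\widehat\Delta)) - \widehat U(\sigma+\alpha\hat h) - \partial_\alpha \widehat U(\sigma+\alpha\hat h)\widehat\Delta$ by $\|\widehat U\|_{C^2}\|\widehat\Delta\|_{\rho''}^2$, and combined with Lemma~\ref{l4} applied to the discarded $e'\widehat\Delta/\hat l$ contribution delivers the quadratic Newton improvement
\begin{equation*}
\|\mathcal{E}[\hat h + \widehat\Delta, \lambda + \delta]\|_{\rho''}\le M_2\,\nu^{-4}(\rho-\rho'')^{-(4\tau+1)}\|e\|_\rho^2,
\end{equation*}
valid whenever $\|\widehat\Delta\|_{\rho''}<\iota/2$, which (H3) combined with the smallness of $e$ secures.

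Iterating with the geometric sequence $\rho_n = \rho_\infty + \delta\cdot 2^{-n}$ (so $\rho_n - \rho_{n+1} = \delta\cdot 2^{-n-1}$), the one-step estimate becomes $\epsilon_{n+1}\le M_2\,\nu^{-4}\delta^{-(4\tau+1)}\,2^{(n+1)(4\tau+1)}\epsilon_n^2$, and the classical Newton induction shows that $\epsilon_0\le\epsilon^*\nu^4\delta^{4\tau}$ with $\epsilon^*$ small enough forces $\epsilon_n$ to decay faster than any geometric sequence. The telescoping bound $\sum_n\|\widehat\Delta_n\|_{\rho_{n+1}}\le C\nu^{-2}\delta^{-2\tau}\epsilon_0$ then produces the limit $\hat h^*\in\mathscr A_{\rho_\infty}$ with the stated estimate, while $\lambda^*$ is summed analogously via $|\delta_n|\le 2N^+\epsilon_n$. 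Applying Lemma~\ref{l4} to each $\partial_\alpha \widehat\Delta_n$ costs an extra factor $(\rho_n-\rho_{n+1})^{-1}=\delta^{-1}2^{n+1}$, which after telescoping produces the $\nu^{-2}\delta^{-2\tau-1}$ perturbations of $N^\pm, c$, and $\iota$ recorded in the theorem. The main obstacle is the precise bookkeeping of these Diophantine and analyticity losses so that the scaling $\epsilon^*\nu^4\delta^{4\tau}$ (and not a weaker polynomial) emerges from combining the two factor of $\nu^{-2}\delta^{-2\tau}$ entering $\|\widehat\Delta\|$ with the quadratic squaring. Uniqueness follows exactly as for Theorem~\ref{sr}: subtracting two putative solutions and using $D_1\mathcal{E}[\hat h^*,\lambda^*]\cdot\hat l^*=0$ rewrites the difference as a quasi-Newton equation with quadratically small RHS (Lemma~\ref{l5}), and the solvability estimate absorbs it once the ball radius is small enough.
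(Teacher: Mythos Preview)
Your approach is correct and is essentially the finite-dimensional specialization of the paper's proof of Theorem~\ref{sr}: the same quasi-Newton factorization into two cohomology equations, the same choice of $\delta$ and $\overline{\widehat W}$, the same quadratic error estimate, and the same geometric sequence of analyticity radii, with the only substantive change being that the small-divisor bound becomes the polynomial $C(d,\tau)\nu^{-1}(\rho-\rho')^{-\tau}$ rather than the super-exponential loss of Lemma~\ref{l1}.

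However, you should be aware that the paper does \emph{not} actually give a proof of Theorem~\ref{oldtheorem}. It is quoted as a simplified version of Theorem~12 of \cite{ref2} and used as a black box in the proof of Theorem~\ref{new}. So there is no ``paper's own proof'' to compare against; what you have written is an independent (and valid) reconstruction of the argument from the cited reference, guided by the infinite-dimensional analogue that the present paper does prove in detail.
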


 Note that Theorem~\ref{new} is in an a-posteriori format.
 Given an approximate solution that satisfies some  non-degeneracy conditions,
 then we get a true solution, and moreover, we get that the exact solution is
 close to the approximate solution. The distance between the approximate
 solution and the true solution can be bounded -- in a slightly smaller
 domain --  by the size of the residual of the approximate solution. 

  In is important to observe that  the non-degeneracy conditions of
 Theorem~\ref{oldtheorem} are rather explicit expressions obtained by averaging
 quantities related to the approximate solution. In particular,
 the difference between the non-degeneracy conditions of the true
 solutions and the non-degeneracy conditions of the true solution are
 bounded by the error in the initial approximation.

\cite[Theorem 12]{ref2} contains  some generalizations.
 It does not assume that $W_n$ are derivatives, but adds an extra constant force. After the work is done, it is shown that, when the $W_n$ are derivatives,
 the extra parameter vanishes. 

 Take $\rho_n=\rho_{\infty}+2^{-n-1}(\rho-\rho_{\infty})$, and $\delta_{n+1}=\rho_n-\rho_{n+1}$. Denote $\hat l^n = 1 + \partial_{\alpha^{[\le n]}} \hat h^n$.
 
 To deduce Theorem~\ref{new} from Theorem~\ref{oldtheorem},
 we consider a sequence of models. For all $n\in\mathbb N_+$, $\E_n$ acting on $\hat h:\mathbb T_{\rho_n}^n \to \mathbb C$, we have
 \[
   \E_n[ \hat h](\sigma_n) \equiv \hat h_n (\sigma_n + \omega\alpha^{[\le n]}) + \hat h(\sigma_n-\omega\alpha^{[\le n]}) + 2\hat h(\sigma_n) + \sum_{j\le n} \widehat W_j(\sigma_n^{[\le j]} +\alpha^{[\le j]}\hat h(\sigma_n)),
 \]
 where $\sigma_n=(\sigma^1,\cdots,\sigma^n)\in\mathbb T_{\rho_n}^n$.
 Let $\hat h^n:\mathbb T_{\rho_n}^n \to \mathbb C$. 
 
 We observe that, provided that the compositions can be defined, 
  \[
   \E_{n+1} [\hat h^n](\sigma_{n+1}) =  \E_n[\hat h^n](\sigma_n) + \widehat W_{n+1}(\sigma_{n+1}+\alpha^{[\le n+1]}\hat h^n(\sigma_n)).
  \]
 
 
 When $n=1$, taking $\hat h^0 = 0$, we have $\E_1[\hat h^0](\sigma_1)=\widehat W_1(\sigma_1)$ and 
 $\| \hat l^0 \|_{\rho_0} = 1 $, 
 $\| (\hat l^0)^{-1} \|_{\rho_1} = 1 $, 
 $\big| \big<\frac1{\hat l^0\cdot\hat l^0\circ T_{-\omega\alpha^{[\le 1]}}}\big> \big| = 1$, 
 ${\rm dist}(\mathbb C^1 \setminus \mathbb T_{\rho}^1, \mathbb T_{\rho_0}^1 ) = \frac{\rho-\rho_{\infty}}{2}$. 
 By Theorem \ref{oldtheorem}, if $\| \widehat W_1 \|_{\rho_0} \le \epsilon_1^* \nu_1^4 \delta_1^{4\tau_1}$, where $\epsilon_1^*=\epsilon^*(2,2,\frac{1}{2},1,\tau_1,\frac{\rho-\rho_{\infty}}{4},\|\widehat W_1\|_{C^2(\mathbb T_{\rho_0}^1)})$, 
 there exists $\hat h^1 : \mathbb T_{\rho_1}^1 \to \mathbb C$ such that $\E [\hat h^1]=0$ and $$\|\hat h^1\|_{\rho_1} \le C_1 \nu_1^{-2} \delta_1^{-2\tau_1} \| \widehat W_1 \|_{\rho_0} , $$ 
 where $C_1=C(2,2,\frac{1}{2},1,\tau_1,\frac{\rho-\rho_{\infty}}{4},\|\widehat W_1\|_{C^2(\mathbb T_{\rho_0}^1)})$ is given by Theorem \ref{oldtheorem}. 
 Moreover, if $$C_1 \nu_1^{-2}\delta_1^{-2\tau_1-1}\|\widehat W_1\|_{\rho_0} \le \frac{1}{2^2},$$
 $$C_1\nu_1^{-2}\delta_1^{-2\tau_1}\|\widehat W_1\|_{\rho_0}\le \frac{\rho-\rho_{\infty}}{2^3},$$
 then 
 $$\| \hat l^1 \|_{\rho_1} \le 1 + \frac{1}{2^2} \le 2,$$
 $$\| (\hat l^1)^{-1} \|_{\rho_1} \le 1 + \frac{1}{2^2} \le 2,$$
 $$\big| \big<\frac1{\hat l^1\cdot\hat l^1\circ T_{-\omega\alpha^{[\le 1]}}}\big> \big| \ge 1 - \frac{1}{2^2} \ge \frac12,$$
 $${\rm dist}(\mathbb C^1 \setminus \mathbb T_{\rho}^1, (Id + \alpha^{[\le1]}\hat h^1)(\mathbb T_{\rho_1}^1) ) \ge (\frac{1}{2}-\frac{1}{2^3})(\rho-\rho_{\infty}) \ge \frac{1}{4}(\rho-\rho_{\infty}) . $$

  
 For general $n$, if $\E_n[\hat h^n]=0$, $\hat h^n:\mathbb T_{\rho_n}^n \to \mathbb C$, we have 
 \[\E_{n+1}[\hat h^n](\sigma_{n+1})=\widehat W_{n+1}(\sigma_{n+1}+\alpha^{[\le n+1]}\hat h^n(\sigma_n)).\]

 Furthermore, if $\hat h^n$ satisfies the non-degeneracy condition and composition condition, by Theorem \ref{oldtheorem}, 
 when 
 \[
 \begin{split}
  \| \widehat W_{n+1} \|_{\rho_n} &\le \epsilon_{n+1}^*\nu_{n+1}^4\delta_{n+1}^{4\tau},\\ 
 \epsilon_{n+1}^*&=\epsilon^*(2,2,\frac{1}{2},n+1,\tau_{n+1},\frac{\rho-\rho_{\infty}}{4},\| \sum_{j\le n+1}\widehat W_j\|_{C^2(\mathbb T_{\rho_{n}}^{n+1})}),
 \end{split}
 \] 
 we can obtain $\hat h^{n+1}: \mathbb T_{\rho_{n+1}}^{n+1} \to \mathbb C$ such that $\E_{n+1}[\hat h^{n+1}]=0$ and 
 $$ \| \hat h^{n+1} - \hat h^n \|_{\rho_{n+1}} \le C_{n+1} \nu_{n+1}^{-2} \delta_{n+1}^{-2\tau_{n+1}} \| \widehat W_{n+1} \|_{\rho_{n}}, $$ 
 where $C_{n+1}=C(2,2,\frac{1}{2},n+1,\tau_{n+1},\frac{\rho-\rho_{\infty}}{4},\| \sum_{j\le n+1}\widehat W_j\|_{C^2(\mathbb T_{\rho_{n}}^{n+1})})$. 
 
 Moreover, assuming that 
 $$ C_{n+1} \nu_{n+1}^{-2} \delta_{n+1}^{-2\tau_{n+1}-1} \| \widehat W_{n+1} \|_{\rho_{n}} \le \frac{1}{2^{n+1}}, $$
 $$ C_{n+1} \nu_{n+1}^{-2} \delta_{n+1}^{-2\tau_{n+1}} \| \widehat W_{n+1} \|_{\rho_{n}} \le \frac{\rho-\rho_{\infty}}{2^{n+2}} , $$
 then 
 $$ \| \hat h^{n+1} - \hat h^n \|_{\rho_{n+1}} \le \frac{\rho-\rho_{\infty}}{2^{n+2}},$$
 $$ \| \hat l^{n+1} \|_{\rho_{n+1}} \le \| \hat l^n \|_{\rho_n} + \frac{1}{2^{n+1}}, $$
 $$ \| (\hat l^{n+1})^{-1} \|_{\rho_{n+1}} \le \| (\hat l^{n})^{-1} \|_{\rho_{n}} + \frac{1}{2^{n+1}},$$
 $$ \big| \big<\frac1{\hat l^{n+1}\cdot\hat l^{n+1}\circ T_{-\omega\alpha^{[\le n+1]}}}\big> \big| \ge \big| \big<\frac1{\hat l^n\cdot\hat l^n\circ T_{-\omega\alpha^{[\le n]}}}\big> \big| - \frac{1}{2^{n+1}}, $$
 $$ {\rm dist}(\mathbb C^{n+1} \setminus \mathbb T_{\rho}^{n+1}, (Id + \alpha^{[\le n+1]}\hat h^{n+1})(\mathbb T_{\rho_{n+1}}^{n+1}) ) \ge {\rm dist}(\mathbb C^n \setminus \mathbb T_{\rho}^n, (Id + \alpha^{[\le n]}\hat h^n)(\mathbb T_{\rho_n}^n) ) - \frac{\rho-\rho_{\infty}}{2^{n+2}}. $$

 The issue now is to study the limit of the sequence $\{\hat h^n\}_{n\ge1}$. Actually, the following two lemmas guarantee that the limit function $\hat h$ is the solution in Theorem \ref{new}.

 \begin{lem}
     $\{\hat h^n\}_{n\in\mathbb N}$ converges uniformly on every compact set of $\mathbb T^{\mathbb N}$. Denote by $\hat h$ the limit obtained in this sense, then $\hat h$ is analytic in $\mathbb T_{\rho_{\infty}}^{\mathbb N}$.
 \end{lem}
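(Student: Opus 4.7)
The plan is to realize $\hat h$ as a telescoping series
\[
\hat h \;=\; \sum_{n \ge 1} \widehat H_n, \qquad \widehat H_n := \hat h^n - \hat h^{n-1}\ (\text{with }\hat h^0 \equiv 0),
\]
and then to check that this presentation simultaneously matches the notion of analyticity on $\mathbb T^{\mathbb N}_{\rho_\infty}$ given in Remark \ref{analytic} and delivers uniform convergence of the partial sums $\hat h^n$ on $\mathbb T^{\mathbb N}$.

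The inductive construction, powered by Theorem \ref{oldtheorem}, has already produced explicit one-step bounds of the form $\|\hat h^{n+1} - \hat h^n\|_{\rho_{n+1}} \le (\rho - \rho_\infty)/2^{n+2}$ (these are precisely the smallness conditions imposed during the iteration). Thus each $\widehat H_n$ is analytic on $\mathbb T^n_{\rho_n}$ with $\|\widehat H_n\|_{\rho_n}$ controlled by a summable geometric sequence. Since $\rho_\infty < \rho_n$, the Fourier definition of the $\rho$-norm is monotone in $\rho$, giving $\|\widehat H_n\|_{\rho_\infty} \le \|\widehat H_n\|_{\rho_n}$, so
\[
\sum_{n \ge 1} \|\widehat H_n\|_{\rho_\infty} \;<\; \infty.
\]
By construction $\widehat H_n$ depends only on the first $n$ torus variables and is analytic in the usual sense on $\mathbb T^n_{\rho_\infty}$, and the summability just displayed is exactly the criterion in Remark \ref{analytic}; this identifies $\hat h := \sum_n \widehat H_n$ as analytic on $\mathbb T^{\mathbb N}_{\rho_\infty}$.

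For the uniform convergence, I use that $|e^{ik\cdot\sigma}| \le e^{\rho_\infty |k|_s}$ for $\sigma \in \mathbb T^{\mathbb N}_{\rho_\infty}$, so the $\rho_\infty$-norm dominates the sup norm there; in particular
\[
\sup_{\sigma \in \mathbb T^{\mathbb N}} |\hat h^n(\sigma) - \hat h(\sigma)| \;\le\; \sum_{j > n}\|\widehat H_j\|_{\rho_\infty} \;\longrightarrow\; 0.
\]
Since $\mathbb T^{\mathbb N}$ is itself compact (Tychonoff on a countable product of circles), this is in fact uniform convergence on all of $\mathbb T^{\mathbb N}$, and a fortiori on each compact subset.

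I do not anticipate a serious obstacle: once the quantitative inductive bounds are in hand, the statement reduces to a telescoping/dominated-convergence exercise. The only point requiring mild bookkeeping is the shift between ambient domains, since $\hat h^n$ originally lives on $\mathbb T^n_{\rho_n}$ while the limit must sit in $\mathbb T^{\mathbb N}_{\rho_\infty}$; this is handled transparently because a function of only the first $n$ Fourier indices has the same $\rho$-norm whether viewed on $\mathbb T^n_\rho$ or on $\mathbb T^{\mathbb N}_\rho$, combined with monotonicity of the $\rho$-norm in $\rho$.
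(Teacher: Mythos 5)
Your proof is correct and takes essentially the same telescoping approach as the paper: both write the limit as a sum of the increments $\hat h^{n}-\hat h^{n-1}$, bound each increment in the $\rho_{n}$-norm by $(\rho-\rho_\infty)2^{-n-1}$ from the inductive step, use monotonicity of the $\rho$-norm to pass to $\rho_\infty$, and invoke Remark~\ref{analytic}. Your extra observations (the $\rho_\infty$-norm dominating the sup norm, and compactness of $\mathbb T^{\mathbb N}$ by Tychonoff, upgrading ``compact subsets'' to uniform convergence on the whole torus) are harmless additions that slightly sharpen the conclusion beyond what the lemma states.
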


 \begin{proof}
     For all $\sigma \in \mathbb T^{\mathbb N}$, we have
     $$\lim_{n\to\infty} \hat h^n (\sigma) = \hat h^0 (\sigma_1)+\sum_{n\in\mathbb N} ( \hat h^{n+1}(\sigma_{n+1})  - \hat h^n (\sigma_n) ).$$
     Note that 
     \[
     \begin{aligned}
         \| \sum_{n\in\mathbb N} (\hat h^{n+1}(\sigma_{n+1})-\hat h^n(\sigma_n)) \|_{\rho_{\infty}} & \le \sum_{n\in\mathbb N} \| \hat h^{n+1} - \hat h^{n} \|_{\rho_{\infty}} \\
         & \le \sum_{n\in\mathbb N} \| \hat h^{n+1} - \hat h^n \|_{\rho_{n+1}} \le \sum_{n\in\mathbb N} \frac{\rho-\rho_{\infty}}{2^{n+2}}.
     \end{aligned}
     \]
     Therefore $\{\hat h^n\}_{n\in\mathbb N}$ converges uniformly on compact sets of $\mathbb T^{\mathbb N}$ and the limit function $\hat h$ is analytic on $\mathbb T_{\rho_{\infty}}^{\mathbb N}$ in the sense of Remark \ref{analytic}.
 \end{proof}

 \begin{lem}
     The limit function $\hat h$ satisfies $\E_{\infty}[\hat h](\sigma) = \hat h(\sigma+\omega\alpha) + \hat h(\sigma-\omega\alpha) - 2\hat h(\sigma) +\sum_{j} \widehat W_j(\sigma_j+\alpha^{[\le j]}\hat h(\sigma)) = 0$.
 \end{lem}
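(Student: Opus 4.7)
The plan is to pass to the limit in the identity $\E_n[\hat h^n]=0$, which holds for every $n$ by construction of the iterates, and match each term against the corresponding term of $\E_\infty[\hat h]$. Fix $\sigma\in\mathbb T^{\mathbb N}$ and write $\sigma_n=(\sigma^1,\dots,\sigma^n)$. From the previous lemma, $\hat h^n\to\hat h$ uniformly on compact sets of $\mathbb T^{\mathbb N}_{\rho_\infty}$, and in fact $\sum_{n\in\mathbb N}\|\hat h^{n+1}-\hat h^n\|_{\rho_{n+1}}\le\sum_{n\in\mathbb N}(\rho-\rho_\infty)2^{-n-2}<\infty$, so we have a quantitative rate that will be reused for the composition terms.

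The three translation terms are easy: since translation by $\omega\alpha^{[\le n]}$ on $\mathbb T^n$ is the restriction of translation by $\omega\alpha$ on $\mathbb T^{\mathbb N}$, uniform convergence of $\hat h^n(\sigma_n)\to\hat h(\sigma)$ (together with the observation that $(\sigma\pm\omega\alpha)^{[\le n]}=\sigma_n\pm\omega\alpha^{[\le n]}$) yields
\[
\hat h^n(\sigma_n\pm\omega\alpha^{[\le n]})\longrightarrow\hat h(\sigma\pm\omega\alpha),\qquad -2\hat h^n(\sigma_n)\longrightarrow-2\hat h(\sigma),
\]
with uniform convergence in $\sigma$ on the appropriate polydisc.

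For the nonlinear term I would split the sum at a large but fixed $N$:
\[
\sum_{j\le n}\widehat W_j\bigl(\sigma_n^{[\le j]}+\alpha^{[\le j]}\hat h^n(\sigma_n)\bigr)=\sum_{j\le N}(\cdot)+\sum_{N<j\le n}(\cdot).
\]
For each fixed $j\le N$, Lemma \ref{l7} (composition estimate, with $L=0$ and a single variable treated as lying in the infinite-dimensional setting by extension) applied to $\widehat W_j$ gives
\[
\bigl\|\widehat W_j(\sigma_n^{[\le j]}+\alpha^{[\le j]}\hat h^n(\sigma_n))-\widehat W_j(\sigma_j+\alpha^{[\le j]}\hat h(\sigma))\bigr\|_{\rho_\infty}\le \|D\widehat W_j\|_{C^0}\cdot\|\hat h^n-\hat h\|_{\rho_\infty},
\]
which tends to $0$ as $n\to\infty$; summing over the finite range $j\le N$ gives convergence of the first piece to $\sum_{j\le N}\widehat W_j(\sigma_j+\alpha^{[\le j]}\hat h(\sigma))$. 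For the tail, the hypothesis $\sum_n\|\widehat W_n\|_{C^2(\mathbb T^n_\rho)}<\infty$ together with the uniform composition bound from Lemma \ref{l7} gives
\[
\Bigl\|\sum_{j>N}\widehat W_j(\sigma_n^{[\le j]}+\alpha^{[\le j]}\hat h^n(\sigma_n))\Bigr\|_{\rho_\infty}\le\sum_{j>N}\|\widehat W_j\|_{\rho+\|\hat h^n\|_{\rho_\infty}}\le\sum_{j>N}\|\widehat W_j\|_{C^2(\mathbb T^j_\rho)},
\]
uniformly in $n$, and this tail can be made arbitrarily small by choosing $N$ large; the same estimate with $\hat h$ in place of $\hat h^n$ shows that $\sum_{j>N}\widehat W_j(\sigma_j+\alpha^{[\le j]}\hat h(\sigma))$ is also small.

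Combining the three contributions, a standard $3\eps$ argument (given $\eps>0$, choose $N$ so that the two tails are each $<\eps/3$, then $n$ so that the difference of the head terms is $<\eps/3$) shows that $\E_n[\hat h^n](\sigma_n)\to\E_\infty[\hat h](\sigma)$ pointwise (in fact uniformly on compacts). Since the left-hand side is identically zero for every $n$, we conclude $\E_\infty[\hat h]\equiv 0$. The main obstacle I expect is precisely this interplay between the convergence of the composition $\widehat W_j\circ(\mathrm{Id}+\alpha^{[\le j]}\hat h^n)$ and the uniform control of the tail of the series; both are handled by Lemma \ref{l7}, provided one verifies that the norms $\|\hat h^n\|_{\rho_\infty}$ stay uniformly bounded so that the composition stays inside the domain of analyticity of each $\widehat W_j$ — which follows from the inductive estimates on $\|\hat h^{n+1}-\hat h^n\|_{\rho_{n+1}}$ established in the construction above.
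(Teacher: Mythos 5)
Your proof is correct and follows essentially the same route as the paper's: both pass to the limit in the identity $\E_n[\hat h^n]=0$, separating the three translation terms (controlled by $\|\hat h-\hat h^n\|_{\rho_\infty}\to0$), the head of the nonlinear sum (controlled by the $C^1$ norms of the $\widehat W_j$ and again $\|\hat h-\hat h^n\|_{\rho_\infty}$), and the tail $\sum_{j>n}\|\widehat W_j\|$ (controlled by the summability hypothesis). Your explicit $3\varepsilon$ decoupling of the truncation index $N$ from the iteration index $n$ is a slightly more careful bookkeeping of exactly the same estimate the paper writes with a single parameter $n$.
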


 \begin{proof}
     For every $n\in\mathbb N$, we have $\E_n[\hat h^n]=0$.
     Then 
     \[
     \begin{aligned}
         \| \E_{\infty}[\hat h](\sigma) \|_{\rho_{\infty}}=&\| \hat h(\sigma + \omega\alpha) + \hat h(\sigma - \omega\alpha) - 2\hat h(\sigma) + \sum_{j} \widehat W_j(\sigma_j+\alpha^{[\le j]}\hat h(\sigma))\|_{\rho_{\infty}}\\
         =& \| \hat h(\sigma + \omega\alpha) + \hat h(\sigma - \omega\alpha) - 2\hat h(\sigma) + \sum_{j} \widehat W_j(\sigma_j+\alpha^{[\le j]}\hat h(\sigma)) \\
         &- \hat h^n (\sigma + \omega \alpha) - \hat h^n (\sigma-\omega\alpha) + \hat h^n(\sigma) - \sum_{j\le n} \widehat W_j(\sigma_j+\alpha^{[\le j]}\hat h^n(\sigma)) \|_{\rho_{\infty}}\\
         \le& 4\| \hat h - \hat h^n \|_{\rho_{\infty}} + \sum_{j \le n} \| \widehat W_j \|_{C^1(\mathbb T_{\rho_{\infty}}^{j})}\| \hat h^n - \hat h \|_{\rho_{\infty}} +\sum_{j \ge n+1}\| \widehat W_j \|_{\rho_{\infty}}\\
         \le& 4\| \hat h - \hat h^n \|_{\rho_{\infty}} + \sum_{j}\| \widehat W_j \|_{C^1(\mathbb T_{\rho_{\infty}}^{j})}\| \hat h^n - \hat h \|_{\rho_{\infty}} + \sum_{j\ge n+1} \| \widehat W_j \|_{\rho_{\infty}}.
     \end{aligned}
     \]
     Since $\hat h^n$ converges to $\hat h$ and $\sum_{j}\| \widehat W_j \|_{C^2(\mathbb T_{\rho}^{j})}$ is bounded, we obtain the desired result.
 \end{proof}

 \begin{rmk}
   The paper \cite{ref2} also contains a version for Sobolev
   regularity. There does not seem to a similar version for  the
   almost periodic case.   Each step of the Sobolev result entails a
   loss of regularity related to the Diophantine exponent.
   Since we need to perform infinitely many steps and each step will
   have an increasing loss of regularity in the Sobolev scale, we can
   only perform a finite number of steps.
   \end{rmk}

 \section{Long-range model with almost-periodic potential}
\label{S5}
\subsection{Long-range models}
Similar to the short range model, the configuration of the system is described by $u=\{u_n\}_{n\in\mathbb{Z}}$ with $u_n\in\mathbb{R}$ in the long range model. The interaction is obtained by assigning an energy to every finite subset of $\mathbb Z$. We consider the formal energy functional \eqref{long}.

Take the formal derivative of $\mathscr S(u)$ with respect to $u_j$ and set it to zero:
\begin{equation}\label{y3}
\frac{\partial\mathscr S}{\partial u_j}=\sum_{L=0}^\infty\sum_{i=j-L}^j\alpha\cdot\partial_{j-i}\widehat H_L(u_i\alpha,\cdots,u_{i+L}\alpha)=0,
\end{equation}
where $\partial_j=\frac{\partial}{\partial\sigma_j}$ for $j=0,1,\cdots,L$. We denote $\partial_\alpha^{(j)}=\alpha\cdot\partial_j$ in the following. It is clear that the operators $\partial_\alpha^{(j)}$ are commutative, \text{i.e.} $\partial_\alpha^{(j)}\partial_\alpha^{(k)}=\partial_\alpha^{(k)}\partial_\alpha^{(j)}$, for any $j,k\in\mathbb{Z}$.

\subsection{Hull function}
We also write $u_n$ of the form:
\begin{equation}\label{y1}
u_n=h(n\omega)=n\omega+\hat h(n\omega\alpha),
\end{equation}
where $\hat h$ is a function on $\mathbb{T^N}$ and $n\in\mathbb{Z}$, $\omega\in\mathbb{R}$, $\alpha\in[0,1]^\mathbb{N}$.

Denote $$\theta=n\omega,\sigma=\theta\alpha,$$
\begin{equation}\label{y2}
\begin{aligned}
h^{(j)}(\theta)&=h(\theta+j\omega)=\theta+j\omega+\hat h(\sigma+j\omega\alpha),\\
\gamma^{(j)}_L(\theta)&=(h^{(j)}(\theta)\alpha,\cdots,h^{(j+L)}(\theta)\alpha).
\end{aligned}
\end{equation}
In particular, we denote $h(\theta)=h^{(0)}(\theta)$, $\gamma_L(\sigma)=\gamma^{(0)}_L(\sigma)$.\smallskip

Using the notations (\ref{y1}) and (\ref{y2}), we can write (\ref{y3}) more concisely as
\begin{equation*}
\sum_{L=0}^\infty\sum_{i=j-L}^j\partial_\alpha^{(j-i)}\widehat H_L(\gamma_L^{(i)}(\theta))=0,\quad\forall j\in\mathbb{Z}.
\end{equation*}
That is,
\begin{equation}\label{y4}
\sum_{L=0}^\infty\sum_{k=0}^L\partial_\alpha^{(k)}\widehat H_L(\gamma_L^{(j-k)}(\theta))=0,\quad\forall j\in\mathbb{Z}.
\end{equation}

If $\omega$ satisfies the Diophantine property, (\ref{y4}) holds if and only if $\mathscr E[\hat h](\theta)$ defined below vanishes identically:
\begin{equation}\label{y5}
\begin{aligned}
\mathscr E[\hat h](\theta)&\equiv\sum_{L=0}^\infty\sum_{k=0}^L\partial_\alpha^{(k)}\widehat H_L(\gamma_L^{(-k)}(\theta))\\
&\equiv\sum_{L=0}^\infty\sum_{k=0}^L\partial_\alpha^{(k)}\widehat H_L(h(\theta-k\omega)\alpha,\cdots,h(\theta-k\omega+L\omega)\alpha)=0,\quad\forall\theta\in\mathbb R.
\end{aligned}
\end{equation}

\subsection{The main theorem of long range models}
\begin{thm}[Long-range KAM theorem]\label{lr}
Let $h(\theta)=\theta+\tilde h(\theta)$, $\tilde h(\theta)=\hat h(\alpha\theta)=\sum_{k\in\mathbb Z_*^{\mathbb N}}\hat h_k e^{i k\cdot\alpha\theta}$, with $\hat h_0=0, \hat h\in\mathscr A_{\rho_0}^1$. $\alpha\in[0,1]^{\mathbb N}$ is rationally independent. Denote $\hat l=1+\partial_\alpha\hat h$ and $T_{-\omega\alpha}(\sigma)=\sigma-\omega\alpha$. We assume:
 \begin{itemize}
\item[{\rm (H1)}]
 Diophantine condition: for some $\tau,\nu>0$ $$|\omega\alpha\cdot k-2n\pi|\ge\frac\nu{\prod_{j\in\mathbb N}(1+\langle\langle j\rangle\rangle^{1+\tau}|k_j|^{1+\tau})},\,\forall k\in\mathbb Z^\mathbb{N}_*\setminus\{0\},\,\forall n\in\mathbb Z.$$
\item[{\rm (H2)}]
 Non-degeneracy condition:$$\|\hat l(\sigma)\|_{\rho_0}\le N^+,\, \|(\hat l(\sigma))^{-1}\|_{\rho_0}\le N^-, \,|\langle\frac1{\hat l\cdot\hat l\circ T_{-\omega\alpha}}\rangle|\ge c>0.$$

\item[{\rm(H3)}]
The interactions $H_L\in\mathcal H_{L,\rho_0+\|\hat h\|_{\rho_0}+\iota}^3$ for some $\iota>0$. Denote
\[
M_L=\max_{i=0,1,2,3}(\|D^iH_L\|_{L, \rho_0+\|\hat h\|_{\rho_0}+\iota})
\]
$$\beta=C\sum_{L\ge2}M_LL^4$$
where C is a combinatorial constant that will be made explicit during the proof.
\item[{\rm(H4)}]
Assume that the inverses indicated below exist and have the indicated bounds:
$$\|(\partial_{\alpha}^{(0)}\partial_{\alpha}^{(1)}\widehat H_1)^{-1}\|_{1,\rho_0+\|\hat h\|_{\rho_0}+\iota}\le T,$$
$$\left |\left (\int_{\mathbb T^{\mathbb N}}\mathcal C_{0,1,1}^{-1}\right)^{-1}\right|\le U$$
where $\mathcal C_{0,1,1}$ is defined in (\ref{y21})\smallskip
 \item[{\rm(H5)}]
$(N^-)^2T\beta<\frac12, (N^-)^2UT<\frac12.$
\end{itemize}
Assume furthermore that $\|\mathscr E[\hat h]\|_{\rho_0}\le\epsilon\le\epsilon^*(N^+, N^-, c, \tau, \nu,\iota, \rho_0, M_L, \beta, T, U)$ where $\epsilon^*>0$ is a function which we will make explicit along the proof.

Then, there exists an analytic function $\hat h^*\in\mathscr A_{\frac{\rho_0}{2}}$ such that $\mathscr E[\hat h^*]=0$.
Moreover,$$\|\hat h-\hat h^*\|_\frac{\rho_0}2\le C_1(N^+, N^-, c, \tau, \nu, \iota, \rho_0, M_L, \beta, T, U)\epsilon_0.$$
The solution $\hat h^*$ is the only solution of $\mathcal{E}[\hat h^*]=0$ with zero average for $\hat h^*$ in a ball centered at $\hat h$ in $\mathscr A_{\frac{3}{8}\rho_0}$, i.e. $\hat h^*$ is the unique solution in the set
$$\{\hat g\in\mathscr A_{\frac{3}{8}\rho_0}: \langle \hat g\rangle=0, \|\hat g-\hat h\|_{\frac{3}{8}\rho_0}\le C_2(N^+, N^-, c, \tau, \nu, \iota, \rho_0, M_L, \beta, T, U)\}$$

where $C_1$, $C_2$ will be made explicit along the proof.
\end{thm}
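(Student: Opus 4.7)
\subsection*{Proof proposal for Theorem~\ref{lr}}

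The plan is to follow the same quasi-Newton philosophy that succeeded in Theorem~\ref{sr}, but to replace the elementary two-step reduction used there by a more delicate factorization that accommodates the sum over $L$. Given an approximate solution $\hat h$ with residual $e=\mathscr E[\hat h]$, I would first write down the linearized operator
\begin{equation*}
(\mathcal L \widehat\Delta)(\theta)=\sum_{L\ge 0}\sum_{k=0}^{L}\sum_{j=0}^{L}\partial_\alpha^{(j)}\partial_\alpha^{(k)}\widehat H_L(\gamma_L^{(-k)}(\theta))\,\widehat\Delta(\sigma+(j-k)\omega\alpha),
\end{equation*}
and seek $\widehat\Delta$ with $\mathcal L\widehat\Delta=-e$. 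The crucial observation, obtained by formally differentiating \eqref{y5} in $\theta$, is the geometric identity $\mathcal L\hat l=\mathscr E[\hat h]'$, which replaces the short-range identity \eqref{8}. Translation invariance forces this identity to hold exactly when $e=0$, and up to the size of $e$ otherwise.

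Using this identity, I would substitute $\widehat\Delta=\hat l\cdot V$ and rewrite
\begin{equation*}
\mathcal L(\hat l V)(\theta)=\mathscr E[\hat h]'(\theta)V(\sigma)+\sum_{L,k,j}\partial_\alpha^{(j)}\partial_\alpha^{(k)}\widehat H_L(\gamma_L^{(-k)}(\theta))\hat l(\sigma+(j-k)\omega\alpha)\bigl[V(\sigma+(j-k)\omega\alpha)-V(\sigma)\bigr],
\end{equation*}
and then, by telescoping $V(\sigma+m\omega\alpha)-V(\sigma)=\sum_{i=0}^{m-1}[V(\sigma+(i+1)\omega\alpha)-V(\sigma+i\omega\alpha)]$, organize the second term as a single operator acting on the first difference $\widehat W(\sigma):=V(\sigma+\omega\alpha)-V(\sigma)$. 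The dominant part of this operator is governed by the $L=1$ contribution and is an invertible multiplication by $\partial_\alpha^{(0)}\partial_\alpha^{(1)}\widehat H_1$, whose inverse is controlled by the constant $T$ in hypothesis (H4); the contributions with $L\ge 2$ are treated as a perturbation, with the combinatorial $L^4$-type factor in $\beta$ absorbing the double sum over $k,j$ after using $\|\widehat\Delta(\sigma+m\omega\alpha)\|_\rho=\|\widehat\Delta\|_\rho$. The reduction then mimics the short-range one: after solving a ``multiplication-type'' step with invertibility provided by (H4), I would solve two homological equations of the form $\widehat W\circ T_{\omega\alpha}-\widehat W=(\cdot)$ using Lemma~\ref{l1}, with averages compensated exactly as in the short-range proof through the constant $U$ in (H4) replacing the $\langle 1/(\hat l\cdot\hat l\circ T_{-\omega\alpha})\rangle$ divisor. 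The smallness assumption (H5) guarantees that the perturbative Neumann series converges.

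Once $\widehat\Delta$ is constructed with $\|\widehat\Delta\|_{\rho'}\lesssim \nu^{-2}\exp(\text{Diophantine loss})\,\|e\|_{\rho}$, the new residual $\mathscr E[\hat h+\widehat\Delta]$ is quadratic in $\|e\|_{\rho}$; this follows from a Taylor expansion in $\widehat\Delta$, using Lemma~\ref{l7} to estimate the remainder $R=\mathscr E[\hat h+\widehat\Delta]-\mathscr E[\hat h]-\mathcal L\widehat\Delta$ by $\sum_L (L+1)^2\|D^2\widehat H_L\|_{L,\rho+\|\hat h\|_\rho+\iota}\|\widehat\Delta\|_{\rho'}^2$, whose sum is finite by hypothesis (H3). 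The iteration scheme, with analyticity radii $\rho_n=\rho_0(1-\frac14\sum_{j=1}^n 2^{-j})\to 3\rho_0/4$, is then identical to the one in the short-range case: one verifies inductively that the condition numbers $N^{\pm}$, $c$, $T$, $U$ stay within a factor of two of their initial values and that $\epsilon_n\le (A\epsilon_0)^{2^n}$, with the Diophantine factor $\exp(\tilde\tau(\rho_n-\rho_{n+1})^{-1/s}\ln\cdots)$ absorbed by the square of $\epsilon_{n-1}$.

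The main obstacle, distinct from the short-range case, is verifying that the long-range operator can indeed be factored cleanly into a multiplication step plus two homological steps: the interchange between the infinite sum $\sum_L$ and the quasi-Newton manipulations must be justified in the norm $\|\cdot\|_{\rho}$, which is exactly where the combinatorial constant $\beta=C\sum_{L\ge 2}M_L L^4$ and hypothesis (H5) enter to ensure a convergent Neumann-type expansion; conceptually, (H5) plays the role that \emph{closeness-to-integrable} would play in a classical setting, and it is what makes the higher-$L$ interactions a perturbation of the $L=1$ problem already solved (essentially) in Theorem~\ref{sr}. Finally, uniqueness in the ball in $\mathscr A_{3\rho_0/8}$ is obtained as in the short-range case: subtracting the equations for two solutions, applying the same decomposition to the linearized operator evaluated at $\hat h^*$, and invoking the interpolation Lemma~\ref{l0} together with Lemma~\ref{l7} to close a contraction argument.
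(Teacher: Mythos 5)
Your high-level plan is close to the paper's, but there is a genuine gap at the heart of the reduction, namely in how you propose to factor the linearized equation.

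You substitute $\widehat\Delta=\hat l V$, drop the quadratic term $\mathscr E[\hat h]'V$, and arrive at
\[
\sum_{L,k,j}\partial_\alpha^{(j)}\partial_\alpha^{(k)}\widehat H_L(\gamma_L^{(-k)})\,\hat l^{(j-k)}\,[V^{(j-k)}-V]=-e,
\]
and then say you would telescope $V^{(m)}-V$ and read the result off as an invertible multiplication by $\partial_\alpha^{(0)}\partial_\alpha^{(1)}\widehat H_1$ acting on $\widehat W:=\mathcal S_1 V$, with the $L\ge 2$ part a Neumann perturbation. This does not work as stated. The paper \emph{does not} reduce to this form: it multiplies the Newton equation by $\hat l$ (rather than dividing), obtaining coefficients
\[
\mathcal C_{j,k,L}(\sigma)=\partial_\alpha^{(k)}\partial_\alpha^{(j)}\widehat H_L(\gamma_L^{(-k)}(\sigma))\,\hat l(\sigma)\,\hat l^{(j-k)}(\sigma),
\]
and the extra $\hat l(\sigma)$ is decisive: using $\gamma_L^{(-j)}(\sigma)=\gamma_L^{(-k)}(\sigma+(k-j)\omega\alpha)$ and Schwarz symmetry of mixed partials one gets the identity $\mathcal C_{j,k,L}(\sigma+(k-j)\omega\alpha)=\mathcal C_{k,j,L}(\sigma)$, which is exactly what allows the $(j,k)$ and $(k,j)$ contributions to collapse into the \emph{double difference} $-\mathcal S_{k-j}\bigl[\mathcal C_{j,k,L}\mathcal S_{j-k}\hat\eta\bigr]$ of \eqref{y11}--\eqref{y12}. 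Your coefficients $\tilde{\mathcal C}_{j,k,L}=\partial\partial\widehat H_L(\gamma_L^{(-k)})\hat l^{(j-k)}$ do not satisfy this shift-symmetry (the shifted object would carry $\hat l$, not $\hat l^{(k-j)}$), so the double-difference cancellation is lost.

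Concretely, look at what your telescoping produces already at $L=1$: the operator acting on $\widehat W$ is
\[
\partial_\alpha^{(1)}\partial_\alpha^{(0)}\widehat H_1(\gamma_1^{(0)})\hat l^{(1)}\widehat W-\partial_\alpha^{(0)}\partial_\alpha^{(1)}\widehat H_1(\gamma_1^{(-1)})\hat l^{(-1)}\,\widehat W\circ T_{-\omega\alpha},
\]
which is a genuine two-shift operator, not a multiplication operator, so ``invertible multiplication by $\partial_\alpha^{(0)}\partial_\alpha^{(1)}\widehat H_1$ controlled by $T$'' is not available. With the correct conjugation, the $L=1$ part is \emph{exactly} $\mathcal S_1\bigl[\mathcal C_{0,1,1}\mathcal S_{-1}\hat\eta\bigr]$, and only after peeling off $\mathcal S_1$ do you get the multiplication by $\mathcal C_{0,1,1}$ on $\mathcal S_{-1}\hat\eta$, whose inverse is bounded by $(N^-)^2 T$; the $L\ge 2$ contributions then sit \emph{between} $\mathcal S_1$ and $\mathcal S_{-1}$ as $\mathscr G=\sum_{L\ge2}\sum_{k>j}\mathcal L_{k-j}^+\mathcal C_{j,k,L}\mathcal R_{j-k}^-$, with $\|\mathcal L^+_n\|,\|\mathcal R^-_n\|\le|n|$ giving the $L^4$ combinatorics in $\beta$. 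Finally, the two-step factorization $\mathcal S_1\circ(\mathcal C_{0,1,1}+\mathscr G)\circ\mathcal S_{-1}$ is not just for clean estimates: the first homological equation $\mathcal S_1\widehat W=\hat l\,\mathscr E[\hat h]$ is solvable because $\langle\hat l\,\mathscr E[\hat h]\rangle=0$ (a consequence of translation invariance), and the undetermined constant in $\widehat W$ is what you must use to enforce $\langle(\mathcal C_{0,1,1}+\mathscr G)^{-1}\widehat W\rangle=0$ — this is where $U$ in (H4) enters. Your scheme has no natural free constant to impose this compatibility, so even if you could invert your operator on $\widehat W$, the second homological equation $\mathcal S_1 V=\widehat W$ would generically be unsolvable. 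The rest of your proposal (quadratic estimate via Lemma~\ref{l7} with the $\sum_L M_L(L+1)^3$ factor, the iteration with radii $\rho_n\to 3\rho_0/4$, and the uniqueness argument via Lemma~\ref{l0}) does match the paper and is fine once the linear step is fixed.
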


\subsection{Proof of Theorem~\ref{lr}}
\subsubsection{Quasi-Newton iteration}
We will use the iterative procedure by modifying the Newton method that gives an approximate solution $\hat h$ of (\ref{y5}). First we should find a solution of 
\begin{equation}\label{y6}
D\mathscr E[\hat h]\cdot\widehat\Delta=-\mathscr E[\hat h],
\end{equation}
where $D$ denotes the derivative of the functional $\mathscr E$ with respect to $\hat h$. Then $\hat h+\widehat\Delta$ will be a better approximate solution of (\ref{y5}).

We compute that
\begin{equation*}
(D\mathscr E[\hat h]\cdot\widehat\Delta)(\theta)=\sum_{L=0}^\infty\sum_{k=0}^L\sum_{j=0}^L\partial_\alpha^{(k)}\partial_\alpha^{(j)}\widehat H_L(\gamma_L^{(-k)}(\theta))\widehat\Delta(\sigma-k\omega\alpha+j\omega\alpha).
\end{equation*}
A direct calculation implies:
\begin{equation*}
\frac{d}{d\theta}\mathscr E[\hat h](\theta)=\sum_{L=0}^\infty\sum_{k=0}^L\sum_{j=0}^L\partial_\alpha^{(k)}\partial_\alpha^{(j)}\widehat H_L(\gamma_L^{(-k)}(\theta))(1+\partial_\alpha\hat h(\sigma-k\omega\alpha+j\omega\alpha)).
\end{equation*}

Let $\hat l(\sigma)=1+\partial_\alpha\hat h(\sigma)$ then we obtain the important identity of $D\mathscr E[\hat h]$:
\begin{equation}\label{y8}
\frac{d}{d\theta}\mathscr E[\hat h](\theta)=(D\mathscr E[\hat h]\cdot\hat l)(\theta).
\end{equation}

Unfortunately, equation (\ref{y6}) is hard to solve and we will modify it into the following equation: 
\begin{equation}\label{y7}
\hat l(D\mathscr E[\hat h]\cdot\widehat\Delta)-\widehat\Delta(D\mathscr E[\hat h]\cdot\hat l)=-\hat l\mathscr E[\hat h].
\end{equation}
Equation (\ref{y7}) is just equation (\ref{y6}) multiplied by $\hat l$ and added the extra term in $\widehat\Delta(D\mathscr E[\hat h]\hat l)$ on the left-hand side. 

Due to (\ref{y8}), one can write
\begin{equation}\label{y16}
\widehat\Delta(D\mathscr E[\hat h]\cdot\hat l)=\widehat\Delta\frac{d}{d\theta}\mathscr E[\hat h].
\end{equation}
The reason why this term is small and it does not affect the quadratic character of the procedure will be discussed in Section \ref{5.5}.

Let 
\begin{equation}\label{y9}
\widehat\Delta=\hat l\cdot\hat\eta.
\end{equation} The unknowns $\widehat\Delta$ and $\hat\eta$ are equivalent due to the non-degeneracy assumption in Theorem~\ref{lr}

Substituting (\ref{y9}) into (\ref{y7}), we obtain 
\begin{equation}\label{y10}
\begin{split}
\sum_{L=0}^\infty\sum_{k=0}^L&\sum_{j=0}^L\partial_\alpha^{(k)}\partial_\alpha^{(j)}\widehat H_L(\gamma_L^{(-k)}(\sigma))\hat l(\sigma)\hat l^{(j-k)}(\sigma)\hat\eta^{(j-k)}(\sigma)\\
&-\sum_{L=0}^\infty\sum_{k=0}^L\sum_{j=0}^L\partial_\alpha^{(k)}\partial_\alpha^{(j)}\widehat H_L(\gamma_L^{(-k)}(\sigma))\hat l(\sigma)\hat l^{(j-k)}(\sigma)\hat\eta(\sigma)\\
=&-\hat l(\sigma)\mathscr E[\hat h](\theta),
\end{split}
\end{equation}
where $\hat l^{(j)}(\sigma)=\hat l(\sigma+j\omega\alpha)$ and $\hat \eta^{(j)}(\sigma)=\hat\eta(\sigma+j\omega\alpha)$.

For fixed $L\in\mathbb{N}$, we note that, when $j=k=0,\cdots,L$ the term in the first sum of the left-hand side of (\ref{y10}) cancels the one in the second sum. When $j\not=k$, we observe that we have four terms involving the mixed derivatives, that is
\begin{equation}\label{y11}
\begin{aligned}
\partial_\alpha^{(k)}\partial_\alpha^{(j)}&\widehat H _L(\gamma_L^{(-k)}(\sigma))\hat l(\sigma)\hat l^{(j-k)}(\sigma)\hat\eta^{(j-k)}(\sigma)\\
+&\partial_\alpha^{(j)}\partial_\alpha^{(k)}\widehat H _L(\gamma_L^{(-j)}(\sigma))\hat l(\sigma)\hat l^{(k-j)}(\sigma)\hat\eta^{(k-j)}(\sigma)\\
-&\partial_\alpha^{(k)}\partial_\alpha^{(j)}\widehat H _L(\gamma_L^{(-k)}(\sigma))\hat l^{(j-k)}(\sigma)\hat l(\sigma)\hat\eta(\sigma)\\
-&\partial_\alpha^{(j)}\partial_\alpha^{(k)}\widehat H _L(\gamma_L^{(-j)}(\sigma))\hat l^{(k-j)}(\sigma)\hat l(\sigma)\hat\eta(\sigma).
\end{aligned}
\end{equation}
We introduce the notations
\begin{equation}\label{y21}
\begin{split}
[\mathcal S_n\hat\eta](\sigma)&:=\hat\eta(\sigma+n\omega\alpha)-\hat\eta(\sigma)\quad\quad\forall n\in\mathbb Z,\hat\eta\in\mathscr A_\rho,\\
\mathcal C_{j,k,L}(\sigma)&:=\partial_\alpha^{(k)}\partial_\alpha^{(j)}\widehat H_L(\gamma_L^{(-k)}(\sigma))\hat l(\sigma)\hat l^{(j-k)}(\sigma).
\end{split}
\end{equation}
With notations above, we can rearrange (\ref{y11}) as
\begin{equation*}
\begin{split}
\mathcal C_{j,k,L}(\sigma)&\cdot[\hat\eta^{(j-k)}-\hat\eta](\sigma)\\
&-\mathcal C_{j,k,L}(\sigma+(k-j)\omega\alpha)[\hat\eta^{(j-k)}-\hat\eta](\sigma+(k-j)\omega\alpha)\\
&=-\mathcal S_{k-j}[\mathcal C_{j,k,L}\mathcal S_{j-k}\hat\eta](\sigma).
\end{split}
\end{equation*}
Therefore, (\ref{y10}) can be written as
\begin{equation}\label{y12}
    \sum_{L=1}^\infty\sum_{\substack{k,j=0\\k>j}}^L\mathcal S_{k-j}[\mathcal C_{j,k,L}\mathcal S_{j-k}\hat\eta](\sigma)=\hat l(\sigma)\mathscr E[\hat h](\theta).
\end{equation}

\subsubsection{Perturbative treatment}
We will study conditions for the invertibility of the operators $\mathcal S_n$. In fact, $\mathcal S _n:{\mathscr A}_\rho\to
\overset{\circ}{\mathscr A}_\rho$ is diagonal on Fourier series. Due to the Diophantine condition, for any given $\hat\eta\in\overset{\circ}{\mathscr A}_\rho$, ,we can find the solution of 
\begin{equation*}
\mathcal S _n\hat\gamma=\hat\eta.
\end{equation*}
These solutions $\hat\gamma$ are unique up to additive constants. We will denote by $\mathcal S ^{-1}_n$ the operator that given $\hat\eta$ produces the $\hat\gamma$ with zero average. 

Hence, we can define the operators
$$\mathcal L_n^{\pm}:=\mathcal S^{-1}_{\pm1}\mathcal S_n$$
acting on ${\mathscr A}_\rho$ and the operators
$$\mathcal R_n^{\pm}:=\mathcal S_n\mathcal S^{-1}_{\pm1}$$
acting on $\overset{\circ}{\mathscr A}_\rho$.
Therefore, we have the following lemma.
\begin{lem}[See \cite{ref6}]
$\|\mathcal L_n^\pm\|_{\mathcal M_1(\mathscr A_\rho,\overset{\circ}{\mathscr A_{\rho}})}\le|n|,\|\mathcal R_n^\pm\|_{\mathcal{M}_1(\overset{\circ}{\mathscr A_\rho},\overset{\circ}{\mathscr A_\rho})}\le|n|$.
\end{lem}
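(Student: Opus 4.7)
The plan is to exploit the fact that all operators in sight act diagonally on the Fourier expansion, so that $\mathcal{L}_n^\pm$ and $\mathcal{R}_n^\pm$ become multiplication operators by an explicit symbol; the desired estimate on the weighted $\ell^1$ norm $\|\cdot\|_\rho$ then reduces to a pointwise bound on this symbol.

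First I would expand $\hat\eta = \sum_{k\in\mathbb Z_*^{\mathbb N}} \hat\eta_k e^{ik\cdot\sigma}$ and note that $[\mathcal S_n\hat\eta](\sigma) = \sum_k (e^{ink\cdot\omega\alpha}-1)\hat\eta_k e^{ik\cdot\sigma}$. This already clarifies the mapping properties: $\mathcal S_n$ annihilates the $k=0$ mode, so its range lies in $\overset{\circ}{\mathscr A_\rho}$, and by the Diophantine hypothesis (H1) the factor $e^{\pm ik\cdot\omega\alpha}-1$ is nonzero for every $k\ne 0$, so $\mathcal S_{\pm 1}^{-1}$ is well-defined on $\overset{\circ}{\mathscr A_\rho}$ as division by this factor, selecting the zero-average primitive in the style of Lemma~\ref{l1}. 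In particular, $\mathcal L_n^\pm$ automatically kills constants and maps $\mathscr A_\rho$ into $\overset{\circ}{\mathscr A_\rho}$, while $\mathcal R_n^\pm$ is well-defined from $\overset{\circ}{\mathscr A_\rho}$ to itself. Composing, both $\mathcal L_n^\pm$ and $\mathcal R_n^\pm$ act on the $k$-th Fourier coefficient ($k\ne 0$) by multiplication by the common symbol
\[
m_k^{\pm}(n) := \frac{e^{ink\cdot\omega\alpha}-1}{e^{\pm ik\cdot\omega\alpha}-1}.
\]

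The core ingredient is a trivial algebraic identity: for any $z$ on the unit circle with $z\ne 1$, the quotient $(z^n-1)/(z-1)$ is a sum of exactly $|n|$ unimodular terms (namely $1+z+\cdots+z^{n-1}$ if $n\ge 1$, and $-\sum_{j=1}^{|n|} z^{-j}$ if $n\le -1$, both obtained from the factorization $z^n-1 = (z-1)(1+z+\cdots+z^{n-1})$ and, for negative $n$, from $z^{-m}-1 = -z^{-m}(z^m-1)$). Hence $|m_k^{\pm}(n)|\le |n|$ for every $k\ne 0$. Since multiplication on each Fourier mode leaves the weight $e^{\rho|k|_s}$ untouched, this pointwise bound transfers directly to the norm:
\[
\|\mathcal L_n^\pm\hat\eta\|_\rho = \sum_{k\ne 0} |m_k^{\pm}(n)|\,|\hat\eta_k|\,e^{\rho|k|_s} \le |n|\,\|\hat\eta\|_\rho,
\]
and identically for $\mathcal R_n^\pm$. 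I do not anticipate any genuine obstacle here; the argument is purely algebraic once the Fourier picture is in place, and the only delicate point is verifying the mapping properties of $\mathcal S_{\pm 1}^{-1}$ (already encoded in Lemma~\ref{l1}) so that the compositions defining $\mathcal L_n^\pm$ and $\mathcal R_n^\pm$ are meaningful on the stated domains.
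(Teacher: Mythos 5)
Your proof is correct, and it is the natural Fourier-multiplier argument: the paper itself cites \cite{ref6} for this lemma without reproducing a proof, and the argument there proceeds in exactly this way, writing $\mathcal L_n^\pm$ and $\mathcal R_n^\pm$ as multiplication on the $k$-th mode by $\bigl(e^{ink\cdot\omega\alpha}-1\bigr)/\bigl(e^{\pm ik\cdot\omega\alpha}-1\bigr)$ and bounding this via the finite geometric sum $1+z+\cdots+z^{n-1}$ (and its negative-index analogue) to get the pointwise bound $|n|$, which passes directly through the weighted $\ell^1$ norm. The one point worth being explicit about in a write-up is that $\mathcal S_{\pm1}^{-1}$ alone loses analyticity radius because of small divisors, so the intermediate stage should be viewed as a formal Fourier series; it is only the composed symbol that is bounded, which is precisely why the lemma holds without shrinking $\rho$.
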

Therefore, (\ref{y12}) can be written as
\begin{equation}\label{y13}
\begin{split}
\hat l(\sigma)\mathscr E[\hat h](\sigma)&=\mathcal S_1[\mathcal C_{0,1,1}\mathcal S_{-1}\hat\eta](\sigma)+\sum_{L=2}^\infty\sum_{0\le j<k\le L}\mathcal S_{k-j}[\mathcal C_{j,k,L}\mathcal S_{j-k}\hat\eta](\sigma)\\
&=\mathcal S_1[\mathcal C_{0,1,1}+\sum_{L=2}^\infty\sum_{0\le j<k\le L}\mathcal S_1^{-1}\mathcal S_{k-j}\mathcal C_{j,k,L}\mathcal S_{j-k}\mathcal S_{-1}^{-1}]\mathcal S_{-1}\hat\eta(\sigma)\\
&=:\mathcal S_1[\mathcal C_{0,1,1}+\mathscr G]\mathcal S_{-1}\hat\eta(\sigma).
\end{split}
\end{equation}
We denote
\begin{equation*}
\mathscr G:=\sum_{L=2}^\infty\sum_{0\le j<k\le L}\mathcal S_1^{-1}\mathcal S_{k-j}\mathcal C_{j,k,L}\mathcal S_{j-k}\mathcal S_{-1}^{-1}=\sum_{L=2}^\infty\sum_{\substack{k,j=0\\k>j}}^L\mathcal L_{k-j}^+\mathcal C_{j,k,L}\mathcal R_{j-k}^-.
\end{equation*}

The procedure to solve (\ref{y13}) is similar to that of equation (\ref{9}) in the short-range case. We will give a brief description below.

The equation(\ref{y13}) is equivalent to the following system:
\begin{equation}\label{S1}
\mathcal S_1\widehat W(\sigma)=\hat l(\sigma)\mathscr E[\hat h](\theta)
\end{equation}
\begin{equation}\label{S2}
\mathcal S_{-1}[\eta](\sigma)=[\mathcal C_{0,1,1}+\mathscr G]^{-1}\widehat W(\sigma).
\end{equation}
It is easy to see that
\[
\begin{split}
\int_{\mathbb{T^N}}\hat l(\sigma)\mathscr E[\hat h]d\sigma&=\sum_{L\in\mathbb N}\sum_{k=0}^{L}\int_{\mathbb{T^N}}\hat l(\sigma)\partial_\alpha^{(k)}\widehat H_L(\gamma_L^{(-k)}(\sigma))d\sigma\\
&=\sum_{L\in\mathbb N}\sum_{k=0}^{L}\int_{\mathbb{T^N}}\partial_\alpha^{(k)}\widehat H_L(\gamma_L(\sigma))\hat l^{(k)}(\sigma)d\sigma\\
&=\sum_{L\in\mathbb N}\int_{\mathbb{T^N}}d\widehat H_L(\gamma_L(\sigma))=0.
\end{split}
\]
We write $\widehat W=\widehat W^0+\overline{\widehat W}$, where $\widehat W^0$ has zero average and $\overline{\widehat W}$ is some constant such that
\[
\int_{\mathbb T ^{\mathbb N}}(\mathcal C_{0,1,1}+\mathscr G)^{-1}[\widehat W]d\sigma=0.
\]
According to Lemma \ref{l1}, we can solve equation (\ref{S1}) and (\ref{S2}), and estimate the solutions. Thus, we obtain a better solution: $\hat h+\widehat\Delta=\hat h+\hat l\cdot\hat\eta$.


\subsubsection{Estimates for one iterative step}\label{5.5}
\paragraph{\textbf{Estimates for approximate solutions}}
Due to the assumptions (\rm{H2}) and (\rm{H3}) in Theorem \ref{lr}, we obtain the following estimate:
\begin{equation*}
\begin{split}
\|\mathscr G\|_{\mathcal{M}_1(\overset{\circ}{\mathscr A_\rho},\overset{\circ}{\mathscr A_\rho})}&\le\sum_{L=2}^\infty M_L(N^+)^2\sum_{\substack{j,k=0\\ k>j}}^L|j-k|^2\\
&\le C\sum_{L=2}^\infty M_LL^4=:\beta.
\end{split}
\end{equation*}
Hence, by (\rm{H5}) the usual Neumann series shows that the operator $(\mathcal C_{0,1,1}+\mathscr G)^{-1}$ is boundedly invertible from $\overset{\circ}{\mathscr A_\rho}$ to $\overset{\circ}{\mathscr A_\rho}$. Moreover, we have
\begin{equation}\label{y14}
\begin{split}
\|(\mathcal C_{0,1,1}+\mathscr G)^{-1}-\mathcal C_{0,1,1}^{-1}\|_{\mathcal{M}_1(\overset{\circ}{\mathscr A_\rho},\overset{\circ}{\mathscr A_\rho})}&=\|[\mathcal C_{0,1,1}(Id+\mathcal C^{-1}_{0,1,1}\mathscr G)]^{-1}-\mathcal C_{0,1,1}^{-1}\|_{\mathcal{M}_1(\overset{\circ}{\mathscr A_\rho},\overset{\circ}{\mathscr A_\rho})}\\
&=\|\sum_{j=0}^\infty(-\mathcal C^{-1}_{0,1,1}\mathscr G)^j\mathcal C_{0,1,1}^{-1}-\mathcal C_{0,1,1}^{-1}\|_{\mathcal{M}_1(\overset{\circ}{\mathscr A_\rho},\overset{\circ}{\mathscr A_\rho})}\\
&\le\|\mathcal C_{0,1,1}^{-1}\|_\rho\sum_{j=1}^\infty\|\mathcal C_{0,1,1}^{-1}\mathscr G\|^j_{\mathcal{M}_1(\overset{\circ}{\mathscr A_\rho},\overset{\circ}{\mathscr A_\rho})}\\
&\le(N^-)^2T\frac{(N^-)^2T\beta}{1-(N^-)^2T\beta}\\
&\le(N^-)^2T.
\end{split}
\end{equation}

The equation for $\overline{\widehat W}\in\mathbb{R}$ can be written as
$$\int_{\mathbb T^{\mathbb N}_\rho}\mathcal C_{0,1,1}^{-1}\overline{\widehat W}d\sigma+\int_{\mathbb T_\rho^{\mathbb N}}[(\mathcal C_{0,1,1}+\mathscr G)^{-1}-\mathcal C_{0,1,1}^{-1}][\widehat W]d\sigma=-\int_{\mathbb T_\rho^{\mathbb N}}\mathcal C_{0,1,1}^{-1}\widehat W^0d\sigma.$$
Therefore, we have
$$|\overline{\widehat W}|\le U\|(\mathcal C_{0,1,1}+\mathscr G)^{-1}-\mathcal C_{0,1,1}^{-1}\|_{\mathcal{M}_1(\overset{\circ}{\mathscr A_\rho},\overset{\circ}{\mathscr A_\rho})}(\|\widehat W^0\|_\rho+|\overline{\widehat W}|)+U(N^-)^2T\|\widehat W^0\|_\rho.$$

By assumption (\rm{H5}) of Theorem \ref{lr} and the estimates (\ref{y14}), we obtain $|\overline{\widehat W}|\le2\|\widehat W^0\|_\rho$. Hence
\begin{equation}\label{y15}
\|\widehat W\|_\rho\le3\|\widehat W^0\|_\rho.
\end{equation}

Since we will lose domain repeatedly, we will introduce auxiliary positive numbers $\rho'<\rho''<\rho$ such that $\rho''=\rho-\frac{\bar\sigma}{2}$, where we denote $\bar\sigma=\rho-\rho'$.
By assumption (\rm{H2}) in Theorem \ref{lr}, we have
$$\|\hat l\cdot\mathscr E[\hat h]\|_\rho\le N^+\|\mathscr E[\hat h]\|_\rho.$$
Then, by Lemma \ref{l1}, we have
$$\|\widehat W^0\|_{\rho''}\le C\nu^{-1}{\rm exp}\left(\frac{2^{\frac1s}\tilde\tau}{\bar\sigma^\frac1s}{\rm ln}\frac{2\tilde\tau}{\bar\sigma}\right)\|\hat l\cdot\mathscr E[\hat h]\|_\rho\le C\nu^{-1}{\rm exp}\left(\frac{2^{\frac1s}\tilde\tau}{\bar\sigma^\frac1s}{\rm ln}\frac{2\tilde\tau}{\bar\sigma}\right)N^+\|\mathscr E[\hat h]\|_\rho.$$
Therefore, due to (\ref{y14}), (\ref{y15}) and Lemma \ref{l1}, we obtain:
\begin{equation*}
\begin{split}
\|\hat\eta\|_{\rho'}&\le C\nu^{-1}{\rm exp}\left(\frac{2^{\frac1s}\tilde\tau}{\bar\sigma^\frac1s}{\rm ln}\frac{2\tilde\tau}{\bar\sigma}\right)\cdot2(N^-)^2T\cdot3\|\widehat W^0\|_{\rho''}\\
&\le6C^2\nu^{-2}{\rm exp}\left(\frac{2^{\frac1s+1}\tilde\tau}{\bar\sigma^\frac1s}{\rm ln}\frac{2\tilde\tau}{\bar\sigma}\right)N^+(N^-)^2T\|\mathscr E[\hat h]\|_{\rho}.
\end{split}
\end{equation*}
Hence, we will have the estimates for the solution $\hat\Delta$ of (\ref{y7})
\begin{equation}\label{y19a}
\|\widehat\Delta\|_{\rho'}\le\|\hat l\|_{\rho'}\|\hat\eta\|_{\rho'}\le6C^2\nu^{-2}{\rm exp}\left(\frac{2^{\frac1s+1}\tilde\tau}{\bar\sigma^\frac1s}{\rm ln}\frac{2\tilde\tau}{\bar\sigma}\right)(N^+)^2(N^-)^2T\|\mathscr E[\hat h]\|_{\rho}.
\end{equation}

In order to make sure $\mathscr E[\hat h+\widehat \Delta]$ is well-defined, by Lemma \ref{l7}, it is sufficient to let $\|\widehat\Delta\|_{\rho'}<\iota$. We will show that it can hold for sufficiently small $\|\mathscr E[\hat h]\|_{\rho_0}$ in the later.
Formally, we have
\[
\begin{split}
\mathscr E[\hat h+\widehat\Delta]&=(\mathscr E[\hat h+\widehat\Delta]-\mathscr E[\hat h]-D\mathscr E[\hat h]\widehat\Delta)+\hat l^{-1}(\hat l\mathscr E[\hat h]+\hat l(D\mathscr E[\hat h]\widehat\Delta))\\
&=(\mathscr E[\hat h+\widehat\Delta]-\mathscr E[\hat h]-D\mathscr E[\hat h]\widehat\Delta)+\hat l^{-1}\widehat\Delta(D\mathscr E[\hat h]\hat l)\\
&=(\mathscr E[\hat h+\widehat\Delta]-\mathscr E[\hat h]-D\mathscr E[\hat h]\widehat\Delta)+\hat l^{-1}\widehat\Delta\frac{d}{d\theta}\mathscr E[\hat h].
\end{split}
\]
The second equation uses $\widehat\Delta$ is the solution of (\ref{y7}) and the third identity is just (\ref{y16}).

By Lemma \ref{l4}, we have
$$\|\frac{d}{d\theta}\mathscr E[\hat h]\|_{\rho'}\le\bar\sigma^{-1}\|\mathscr E[\hat h]\|_\rho,$$
then 
\begin{equation}\label{y17}
\|\frac{\widehat\Delta}{\hat l}\cdot\frac{d}{d\theta}\mathscr E[\hat h]\|_{\rho'}\le6C^2\nu^{-2}{\rm exp}\left(\frac{2^{\frac1s+1}\tilde\tau}{\bar\sigma^\frac1s}{\rm ln}\frac{2\tilde\tau}{\bar\sigma}\right)(N^+)^2(N^-)^3T\bar\sigma^{-1}\|\mathscr E[\hat h]\|_{\rho}^2.
\end{equation}

We also obtain the inequality by Lemma \ref{l7}
\begin{equation}\label{y18}
\begin{split}
\quad\|\mathscr E[\hat h+\widehat\Delta]&-\mathscr E[\hat h]-D\mathscr E[\hat h]\widehat\Delta\|_{\rho'}\le 
\sum_{L=0}^{\infty}\sum_{k=0}^LM_L(L+1)^2\|\widehat\Delta\|_{\rho'}^2 =\sum_{L=0}^\infty M_L(L+1)^3\|\widehat\Delta\|^2_{\rho'}\\
&\le36\sum_{L=0}^\infty M_L(L+1)^3C^4\nu^{-4}{\rm exp}\left(\frac{2^{\frac1s+2}\tilde\tau}{\bar\sigma^\frac1s}{\rm ln}\frac{2\tilde\tau}{\bar\sigma}\right)(N^+)^4(N^-)^4T^2\|\mathscr E[\hat h]\|_\rho^2.
\end{split}
\end{equation}
By (\ref{y17}) and (\ref{y18}) we have
\begin{equation}\label{y20}
\begin{split}
\|\mathscr E[\hat h+\widehat\Delta]\|_{\rho'}&\le36C^2\nu^{-2}{\rm exp}\left(\frac{2^{\frac1s+2}\tilde\tau}{\bar\sigma^\frac1s}{\rm ln}\frac{2\tilde\tau}{\bar\sigma}\right)(N^+)^2(N^-)^3T\\
&\qquad\qquad\cdot(\bar\sigma^{-1}+\sum_{L=0}^\infty M_L(L+1)^3C^2\nu^{-2}(N^+)^2N^-T)\|\mathscr E[\hat h]\|^2_\rho\\
&\le A{\rm exp}\left(\frac{2^{\frac1s+2}\tilde\tau}{\bar\sigma^\frac1s}{\rm ln}\frac{2\tilde\tau}{\bar\sigma}\right)(\bar\sigma^{-1}+B)\|\mathscr E[\hat h]\|^2_\rho.
\end{split}
\end{equation}
where we denote by $A$, $B$ the uniform upper bounds of $36C^2\nu^{-2}(N^+)^2(N^-)^3T$,\\$\sum_{L=0}^\infty M_L(L+1)^3C^2\nu^{-2}(N^+)^2N^-T$  respectively, and we require $\bar\sigma\le2\tilde\tau$.
\smallskip

\paragraph{\textbf{Estimates for the condition numbers}}
We use the notations introduced in (\ref{y2}) and denote by $\tilde\gamma_L$ the one corresponding to $\hat h+\widehat\Delta$ instead of $\hat h$. By Cauchy estimate and the mean value theorem, we have
\begin{equation*}
\begin{split}
&\|\partial_\alpha^{(0)}\partial_\alpha^{(1)}\widehat H_1(\tilde\gamma_1(\sigma))-\partial_\alpha^{(0)}\partial_\alpha^{(1)}\widehat H_1(\gamma_1(\sigma))\|_{\rho'}\le2M_1\|\widehat\Delta\|_{\rho'}\\
&\le2M_1\cdot6C^2\nu^{-2}{\rm exp}\left(\frac{2^{\frac1s+1}\tilde\tau}{\bar\sigma^\frac1s}{\rm ln}\frac{2\tilde\tau}{\bar\sigma}\right)(N^+)^2(N^-)^2T\|\mathscr E[\hat h]\|_{\rho}.
\end{split}
\end{equation*}

We define
\begin{equation*}
\begin{split}
&\chi:=6C^2\nu^{-2}{\rm exp}\left(\frac{2^{\frac1s+1}\tilde\tau}{\bar\sigma^\frac1s}{\rm ln}\frac{2\tilde\tau}{\bar\sigma}\right)(N^+)^2(N^-)^2T\|\mathscr E[\hat h]\|_{\rho},\\
&\chi':=6C^2\nu^{-2}\cdot\frac{2}{\bar\sigma}{\rm exp}\left(\frac{2^{\frac2s+1}\tilde\tau}{\bar\sigma^\frac1s}{\rm ln}\frac{4\tilde\tau}{\bar\sigma}\right)(N^+)^2(N^-)^2T\|\mathscr E[\hat h]\|_{\rho}.
\end{split}
\end{equation*}

Replacing $\bar\sigma$ with $\frac{\bar\sigma}2$ in (\ref{y19a}), we get
\[
\begin{split}
&\|\widehat\Delta\|_{\rho''}\le6C^2\nu^{-2}{\rm exp}\left(\frac{2^{\frac2s+1}\tilde\tau}{\bar\sigma^\frac1s}{\rm ln}\frac{4\tilde\tau}{\bar\sigma}\right)(N^+)^2(N^-)^2T\|\mathscr E[\hat h]\|_{\rho},\\
&\|\partial_\alpha\widehat\Delta\|_{\rho'}\le(\rho''-\rho')^{-1}\|\widehat\Delta\|_{\rho''}\le\chi'.
\end{split}
\]
If $\chi'\le(N^+)^2+N^+$ and $\frac{\bar\sigma}2\le1$, then $\chi\le\chi'$ and we obtain:
\begin{equation*}
\begin{split}
\|\widetilde{\mathcal C}_{0,1,1}-\mathcal C_{0,1,1}\|_{\rho'}&=\|\partial_\alpha^{(0)}\partial_\alpha^{(1)}\widehat H_1(\tilde\gamma_1^{(-1)}(\sigma))(\hat l+\partial_\alpha\widehat\Delta)(\sigma)(\hat l+\partial_\alpha\widehat\Delta)(\sigma-\omega\alpha)\\
&-\partial_\alpha^{(0)}\partial_\alpha^{(1)}\widehat H_1(\gamma_1^{(-1)}(\sigma))\hat l(\sigma)\hat l(\sigma-\omega\alpha)\|_{\rho'}\\
&\le\|\partial_\alpha^{(0)}\partial_\alpha^{(1)}\widehat H_1(\tilde\gamma_1^{(-1)}(\sigma))-\partial_\alpha^{(0)}\partial_\alpha^{(1)}\widehat H_1(\gamma_1^{(-1)}(\sigma))\|_{\rho'}\|\hat l(\sigma)\hat l(\sigma-\omega\alpha)\|_{\rho'}\\
&+\|\partial_\alpha^{(0)}\partial_\alpha^{(1)}\widehat H_1(\tilde\gamma_1^{(-1)}(\sigma))\hat l(\sigma)\partial_\alpha\widehat\Delta(\sigma-\omega\alpha)\|_{\rho'}\\
&+\|\partial_\alpha^{(0)}\partial_\alpha^{(1)}\widehat H_1(\tilde\gamma_1^{(-1)}(\sigma))\partial_\alpha\widehat\Delta(\sigma)\hat l(\sigma-\omega\alpha)\|_{\rho'}\\
&+\|\partial_\alpha^{(0)}\partial_\alpha^{(1)}\widehat H_1(\tilde\gamma_1^{(-1)}(\sigma))\partial_\alpha\widehat\Delta(\sigma)\partial_\alpha\widehat\Delta(\sigma-\omega\alpha)\|_{\rho'}\\
&\le2M_1\chi(N^+)^2+2M_1N^+\chi'+M_1(\chi')^2\\
&\le3M_1\chi'((N^+)^2+N^+).
\end{split}
\end{equation*}

We use the same notations as in Theorem \ref{lr}, but use the $\sim$ to indicate that they are estimated at the function $\hat h+\widehat\Delta$. Therefore, it is easy to check by the mean value theorem and Cauchy estimates:
\begin{equation}\label{y19}
\begin{split}
\widetilde N^-&\equiv\|(1+\partial_\alpha(\hat h+\widehat\Delta))^{-1}\|_{\rho'}\le N^-+\sum_{j=1}^{\infty}\|(-\frac{\partial_\alpha\widehat\Delta}{1+\partial_\alpha\hat h})^j(1+\partial_\alpha\hat h)^{-1}\|_{\rho'}\\
&\le N^-+\sum_{j=1}^{\infty}(N^-\chi')^jN^-=N^-+\frac{\chi'(N^-)^2}{1-\chi'N^-},\\
\widetilde U^{-1}&\equiv\left |\int_{\mathbb T^{\mathbb N}}\widetilde{\mathcal C}_{0,1,1}^{-1}\right |=\left |\int_{\mathbb T^{\mathbb N}}\left \{\mathcal C_{0,1,1}[Id+\mathcal C_{0,1,1}^{-1}(\widetilde{\mathcal C}_{0,1,1}-\mathcal C_{0,1,1})]\right \}^{-1}\right |\\
&\ge U^{-1}(1-\sum_{j=1}^{\infty}\|\mathcal C_{0,1,1}^{-1}(\widetilde{\mathcal C}_{0,1,1}-\mathcal C_{0,1,1})\|_{\rho'}^j)\\
&\ge U^{-1}-\frac{3(N^-)^2TM_1((N^+)^2+N^+)\chi'}{1-3(N^-)^2TM_1((N^+)^2+N^+)\chi'}U^{-1},\\
|\tilde c-c|&\equiv\left |\langle\frac1{(\hat l+\widehat{\Delta})\cdot(\hat l+\widehat{\Delta})\circ T_{-\omega\alpha}}\rangle-\langle\frac1{\hat l\cdot\hat l\circ T_{-\omega\alpha}}\rangle\right |\\
&=\left |\langle\frac{(\hat l\cdot\widehat\Delta\circ T_{-\omega\alpha}+\widehat\Delta\cdot(\hat l+\widehat\Delta)\circ T_{-\omega\alpha})}{(\hat l+\widehat{\Delta})\cdot(\hat l+\widehat{\Delta})\circ T_{-\omega\alpha}\cdot\hat l\cdot\hat l\circ T_{-\omega\alpha}}\rangle\right |\\
&\le(\widetilde N^-)^2(N^-)^2\chi(2N^++\chi).
\end{split}
\end{equation}

\subsubsection{Estimates for the iterative steps}
The function $\hat h_0$ we start with is defined in a domain parameterized by $\rho_0$. We choose a sequence of parameters
$$\rho_n=\rho_{n-1}-\frac{\rho_0}{4}2^{-n}.$$
The $n$ iterative step starts with a function $\hat h_n$ defined in a domain of radius $\rho_n$ and ends up with a function $\hat h_{n+1}=\hat h_n+\widehat\Delta_n$ defined in a domain of radius $\rho_{n+1}$.

Due to the non-degeneracy condition, (\ref{y19}) are bounded uniformly. By (\ref{y20}), if we have $\rho_0\le\frac4B$ then 
\begin{equation*}
\begin{split}
\epsilon_n&\le A{\rm exp}\left(\frac{2^{2+\frac{n+3}{s}}\tilde\tau}{{\rho_0}^\frac1s}{\rm ln}\frac{2^{n+3}\tilde\tau}{\rho_0}\right)\cdot\left(\frac{2^{n+2}}{\rho_0}+B\right)\epsilon_{n-1}^2\\
&\le A{\rm exp}\left(\frac{2^{2+\frac{n+3}{s}}\tilde\tau}{{\rho_0}^\frac1s}{\rm ln}\frac{2^{n+3}\tilde\tau}{\rho_0}\right)\frac{2^{n+3}}{\rho_0}\epsilon_{n-1}^2\\
&\le(D\epsilon_0)^{2^n},
\end{split}
\end{equation*}
where we define
$D:=\frac{A}{\rho_0}2^82^{\frac{2^{8+\frac1s}\tilde\tau}{\rho_0^{\frac1s}}}\left(\frac{\tilde\tau}{\rho_0}\right)^{\frac{2^{5+\frac1s}\tilde\tau}{\rho_0^\frac1s}}$. If $D\epsilon_0< 1$, then $\epsilon_n$ decreases faster than any exponential.

We denote by $E$ the uniform upper bound of $\chi$, 
\begin{equation*}
\begin{split}
\sum_{j=0}^n\|\widehat\Delta_j\|_{\rho_n}&\le\sum_{j=0}^n\|\widehat\Delta_j\|_{\rho_j}\le\sum_{j=0}^nE{\rm exp}\left(\frac{2^{1+\frac{j+3}{s}}\tilde\tau}{\rho_0^\frac1s}{\rm ln}\frac{2^{j+3}\tilde\tau}{\rho_0}\right)\epsilon_j\\
&\le\sum_{j=0}^nE{\rm exp}\left(\frac{2^{1+\frac{j+3}{s}}\tilde\tau}{\rho_0^\frac1s}{\rm ln}\frac{2^{j+3}\tilde\tau}{\rho_0}\right)(D\epsilon_0)^{2^j}\\
&\le\sum_{j=0}^nE(D'\epsilon_0)^{2^j},
\end{split}
\end{equation*}
where we define
$D'=e^{2\tilde\tau^{\frac32}\frac8{\rho_0}}D.$
If $\epsilon_0<\min\{\frac{\iota}{8ED'}, \frac{1}{2D'} \}$, then we have $\sum_{j=0}^n\|\widehat\Delta_j\|_{\rho_n}<\frac{\iota}{4},\,\forall n\in\mathbb N$, and $\mathscr E[\hat h_{n+1}]$ is well-defined.

We will verify that condition numbers in assumption (\rm{H2}) of Theorem \ref{lr} have uniform upper bounds respectively. Take $N^+$ as an example below: 
\begin{equation*}
\begin{split}
N^+(\hat h_n,\rho_n)&\le N^+(\hat h_{n-1},\rho_n)+\|\partial_\alpha(\hat h_n-\hat h_{n-1})\|_{\rho_n}\\
&\le N^+(\hat h_{n-1},\rho_{n-1})+(\rho_{n-1}-\rho_n)^{-1}\|\widehat\Delta_{n-1}\|_{\rho_{n-1}}\\
&\le N^+(\hat h_0,\rho_0)+\frac{8}{\rho_0}\sum_{j=0}^{n-1}2^jL(D'\epsilon_0)^{2^j}\\
&\le N^+(\hat h_0,\rho_0)+\frac{8}{\rho_0}\sum_{j=0}^{n-1}L(2D'\epsilon_0)^{2^j},
\end{split}
\end{equation*}
Hence, if $\epsilon_0\le\min\{\frac{\rho_0 N^+(\hat h_0,\rho_0)}{32LD'},\frac{1}{4D'}\}$, we have $N^+(\hat h_n,\rho_n)\le2 N^+(\hat h_0,\rho_0)$, $\forall n\in\mathbb N$. Therefore, condition numbers are uniformly bounded and the iterative step
can be carried out infinitely often.

In order to finish the proof of Theorem \ref{lr}, here we give estimates of the solution:
\begin{equation*}
\begin{split}
\|\hat h_N-\hat h_0\|_{\frac{\rho_0}{2}}&\le\sum_{n=1}^N \|\widehat\Delta_n\|_{\rho_n}\\
&\le\sum_{n=1}^{\infty} L(D'\epsilon_0)^{2^n}=LD'\epsilon_0\frac1{1-D'\epsilon_0}\\
&\le2LD'\epsilon_0,
\end{split}
\end{equation*}
where $\epsilon_0\le\frac1{2D'}$. Therefore $\|\hat h^*-\hat h_0\|_{\frac{\rho_0}{2}}\le C_1\epsilon_0$, where $C_1=2LD'$. 

\subsubsection{Uniqueness of the solution}
Suppose that $\| \hat h^*-\hat h^{**} \|_{\frac{3}{8}\rho_0},\,\| \hat h^*-\hat h^{**} \|_{\frac{3}{8}\rho_0}\le r <\frac{\iota}{4}$, $\mathcal{E}[\hat h^*]=\mathcal E[\hat h^{**}]=0$, and $\langle \hat h^* \rangle=\langle \hat h^{**} \rangle=0$. Then we have $\|\hat h^{**}-\hat h^*\|_{\frac{\rho_0}{4}}\le\|\hat h^{**}-\hat h^*\|_{\frac{3}{8}\rho_0}\le 2r$, and
\begin{equation}\label{uu1}
    \begin{aligned}
        0=\mathcal E[\hat h^{**}]-\mathcal E[\hat h^{*}]=&D\mathcal{E}[\hat h^*](\hat h^{**}-\hat h^*)+\sum_{L=0}^{\infty}\sum_{k=0}^L(\partial_{\alpha}^{(k)}\widehat H_L(\gamma_L^{**(-k)}(\theta))-\partial_{\alpha}^{(k)}\widehat H_L(\gamma_L^{*(-k)}(\theta))\\
        &\quad-\sum_{j=0}^L\partial_{\alpha}^{(k)}\widehat H_L(\gamma_L^{*(-k)}(\theta))(\hat h^{**}-\hat h^*)(\sigma-k\omega\alpha+j\omega\alpha))\\
        =&D\mathcal{E}[\hat h^*](\hat h^{**}-\hat h^*)+R.
    \end{aligned}
\end{equation}
By Lemma \ref{l0} and Lemma \ref{l7}, we have 
\begin{align*}
    \|R\|_{\frac{\rho_0}{4}}&\le\sum_{L=0}^{\infty}\sum_{k=0}^L M_L(L+1)^2\| \hat h^{**}-\hat h^*\|_{\frac{\rho_0}{4}}^2=\sum_{L=0}^{\infty}M_L(L+1)^3\| \hat h^{**}-\hat h^*\|_{\frac{\rho_0}{4}}^2\\
    &\le \sum_{L=0}^{\infty}M_L(L+1)^3\| \hat h^{**}-\hat h^*\|_{\frac{\rho_0}{8}}\| \hat h^{**}-\hat h^*\|_{\frac{3}{8}\rho_0}\\
    &\le \sum_{L=0}^{\infty}M_L(L+1)^3\| \hat h^{**}-\hat h^*\|_{\frac{\rho_0}{8}} \cdot 2r.
\end{align*}

Denote $\hat l^*=1+\partial_{\alpha}\hat h^*$. Since $$D\mathcal{E}[\hat h^*]\cdot \hat l^*=\frac{d}{d\theta}\mathcal{E}[\hat h^*](\theta)=0,$$
we can write the equation (\ref{uu1}) as:
$$\hat l^* D\mathcal{E}[\hat h^*](\hat h^{**}-\hat h^*)-(\hat h^{**}-\hat h^*)D\mathcal E[\hat h^*]\cdot\hat l^*=-\hat l^*R.$$

Notice that the equation (\ref{uu1}) and the equation (\ref{y7}) share the same form. Using the uniqueness statements for the solution of equation (\ref{y7}) and the estimate (\ref{y19a}), we conclude that 
\begin{equation*}
\begin{aligned}
    \|\hat h^{**}-\hat h^*\|_{\frac{\rho_0}{8}}&\le6C^2\nu^{-2}{\rm exp}\left(\frac{2^{\frac1s+1}\tilde\tau}{(\frac{\rho_0}{8})^\frac1s}{\rm ln}\frac{2\tilde\tau}{\frac{\rho_0}{8}}\right)(N^+)^2(N^-)^2T\|R\|_{\frac{\rho_0}{4}}\\
    &\le 6C^2\nu^{-2}{\rm exp}\left(\frac{2^{\frac1s+1}\tilde\tau}{(\frac{\rho_0}{8})^\frac1s}{\rm ln}\frac{2\tilde\tau}{\frac{\rho_0}{8}}\right)(N^+)^2(N^-)^2T\sum_{L=0}^{\infty}M_L(L+1)^3\| \hat h^{**}-\hat h^*\|_{\frac{\rho_0}{8}} \cdot 2r
\end{aligned}
\end{equation*}
Therefore, when $r$ is small enough, we obtain $\hat h^{**}=\hat h^*$. This completes the proof of uniqueness of the solution in Theorem \ref{lr}.
\bibliographystyle{alpha}
\bibliography{a}
\end{document}